\documentclass[11pt, a4paper]{article}
\usepackage[all]{xy}
\usepackage[latin1]{inputenc} 
\usepackage[T1]{fontenc}
\usepackage[dvips]{graphics,graphicx}
\usepackage{amsfonts}
\usepackage{amssymb}
\usepackage{amsrefs}
\usepackage{amsmath}
\usepackage{mathtools}
\usepackage{esint}
\usepackage{algorithm, algorithmicx}
\usepackage{algpseudocode}
\usepackage{makecell}

\allowdisplaybreaks

\usepackage{amsbsy,
	amsopn,
	amscd, 
	amsxtra, 
	amsthm,
	verbatim}
\usepackage{upref}
\usepackage{xcolor}
\usepackage{enumitem}
\usepackage[colorlinks,
linkcolor=blue,
anchorcolor=blue,
citecolor=blue
]{hyperref}

\usepackage{thmtools}
\usepackage{caption}
\usepackage{subcaption}
\usepackage{booktabs}
\usepackage{longtable}
\usepackage[margin=1in]{geometry}

\newtheorem{lemma}{Lemma}[section]
\newtheorem{theorem}[lemma]{Theorem}
\newtheorem{proposition}[lemma]{Proposition}
\newtheorem*{proposition*}{Proposition}

\newtheorem*{corollary*}{Corollary}
\newtheorem{definition}[lemma]{Definition}
\newtheorem{assumption}{Assumption}
\theoremstyle{remark}
\newtheorem{remark}[lemma]{Remark}
\newtheorem{example}[lemma]{Example}

\newcommand{\ud}{\,\mathrm{d}}
\newcommand{\R}{\mathbb{R}}
\newcommand{\T}{\mathbb{T}}
\newcommand{\pb}{\mathbb{P}}
\newcommand{\calK}{\mathcal{K}}
\newcommand{\calF}{\mathcal{F}}
\newcommand{\fve}{\mathcal{F}_\varepsilon}
\newcommand{\wvet}{w^{(\varepsilon)}_t}
\newcommand{\Kve}{K_\varepsilon}
\newcommand{\xieps}{\xi_\varepsilon}
\newcommand{\opQ}{\mathcal{Q}}
\newcommand{\rhove}{\rho^{(\varepsilon)}}

\newcommand{\Lambdae}{\Lambda_\varepsilon}
\DeclareMathOperator*{\argmax}{arg\,max}
\DeclareMathOperator*{\argmin}{arg\,min}
\DeclareMathOperator*{\KL}{KL}
\newlength{\leftstackrelawd}
\newlength{\leftstackrelbwd}
\def\leftstackrel#1#2{\settowidth{\leftstackrelawd}%
{${{}^{#1}}$}\settowidth{\leftstackrelbwd}{$#2$}%
\addtolength{\leftstackrelawd}{-\leftstackrelbwd}%
\leavevmode\ifthenelse{\lengthtest{\leftstackrelawd>0pt}}%
{\kern-.5\leftstackrelawd}{}\mathrel{\mathop{#2}\limits^{#1}}}

\usepackage{microtype}
\definecolor{darkgreen}{rgb}{0.1,0.6,0.1}
\definecolor{darkred}{rgb}{0.6,0,0}
\definecolor{lightgray}{rgb}{0.5,0.5,0.5}
\newcommand{\Yu}[1]{{\color{darkgreen}#1}}
\newcommand{\De}[1]{{\color{orange}#1}}
\newcommand{\Li}[1]{{\color{blue}#1}}
\newcommand{\ed}[1]{{\color{red}#1}}
\newcommand{\wkstc}{\overset{\ast}{\rightharpoonup}}
 \newenvironment{listi}
  {\begin{list} 
 {(\roman{broj})}
{ \usecounter{broj}}
     \setlength{\labelwidth}{25pt}
  }
{   \end{list} }
 \newenvironment{lista}
  {\begin{list} 
 {(\alph{broja})}
{ \usecounter{broja}}
     \setlength{\labelwidth}{25pt}
  }
{   \end{list} }
\newcounter{broj}
\newcounter{broja}

\newcommand{\nc}{\normalcolor}

\title{Birth-death dynamics for sampling: Global convergence, approximations and their asymptotics}
\author{Yulong Lu\thanks{School of Mathematics, University of Minnesota, Twin Cities, Minneapolis, MN, 55455, USA.  (yulonglu@umn.edu)}
	\and Dejan Slep\v{c}ev\thanks{Department of Mathematical Sciences, Carnegie Mellon University, Pittsburgh, PA, 15213, USA (slepcev@math.cmu.edu)}
	\and Lihan Wang\thanks{(Corresponding Author) Department of Mathematical Sciences, Carnegie Mellon University, Pittsburgh, PA, 15213, USA (lihanw@andrew.cmu.edu)}}
\date{\today}
\providecommand{\keywords}[1]{\textbf{\textit{Keywords:}} #1}

\begin{document}

\maketitle


\begin{abstract}
    Motivated by the challenge of sampling Gibbs measures with nonconvex potentials, we study a continuum birth-death dynamics. We improve results in previous works \cite{lu2019accelerating, liu2022polyak} and provide weaker hypotheses under which the probability density of the birth-death governed by Kullback-Leibler divergence or by $\chi^2$ divergence converge exponentially fast to the Gibbs equilibrium measure, with a universal rate that is independent of the potential barrier. To build a practical numerical sampler based on the pure birth-death dynamics, we consider an interacting particle system, which is inspired by the gradient flow structure and the classical Fokker-Planck equation and relies on kernel-based approximations of the measure. Using the technique of $\Gamma$-convergence of gradient flows,   
    we show  that on the torus, smooth and bounded positive solutions of the kernelized dynamics converge on finite time intervals, to the pure birth-death dynamics as the kernel bandwidth shrinks to zero. 
    Moreover we provide quantitative estimates on the bias of minimizers of the energy corresponding to the kernelized dynamics.
    Finally we prove the long-time asymptotic results on the convergence of the 
    asymptotic states of the kernelized dynamics towards the Gibbs measure. 
\end{abstract}

\keywords{spherical Hellinger metric; gradient flow; statistical sampling; birth-death dynamics.}

\section{Introduction}
Sampling from a given target probability distribution has diverse applications, including 
 Bayesian statistics, machine learning, statistical physics, and many others. In practice, the measure $\pi$ is often the Gibbs measure corresponding to potential $V: \R^d \rightarrow \R$: 
\[
\pi(x) = \frac{1}{Z} e^{-V(x)}, \qquad \textrm{for } x \in \R^d,
\]
where $Z$ is the typically unknown normalization constant.
\smallskip
 
 Some of the most popular methods for sampling such distributions are based on 
 Markov chain Monte Carlo (MCMC) approach. Much of the research works on MCMC have been devoted to designing Markov chains that are ergodic with respect to the target probability measure and enjoy fast mixing properties. Popular sampling methods include Langevin MCMC \cite{grest1986molecular,roberts1996exponential}, Hamiltonian Monte Carlo \cite{andersen1980molecular,neal2011mcmc,rossky1978brownian, bou2017randomized}, bouncy particle and zigzag samplers \cite{bierkens2019zig,bouchard2018bouncy}, and affine-invariant ensemble MCMC \cite{goodman2010ensemble}, and Stein variational gradient descent (SVGD) \cite{liu2016stein}.
When the potential function $V$ is strongly convex, these sampling methods perform quite well; we refer to recent literature \cite{dalalyan2017theoretical,durmus2017nonasymptotic,vempala2019rapid,wibisono2018sampling,lu2022explicit} and references therein for understanding their convergence and computational complexity.
However, the efficiency of these sampling methods are hampered by the multi-modality of $\pi$ (corresponding to a non-convex $V$) as it takes exponentially long time for the sampler to hop from one mode to another. Many diffusion-based samplers suffer from such metastability issue, and numerous techniques have been proposed to alleviate this issue, including in particular parallel and simulated tempering \cite{swendsen1986replica,neal2001annealed,marinari1992simulated} and adaptive biasing methods \cite{wang2001efficient,laio2002escaping, darve2001calculating, henin2004overcoming, lelievre2008long}. 
 \smallskip
 
The sampling problem can be recast as an optimization problem  on the space of probability measures \cite{wibisono2018sampling,bernton2018langevin}. Indeed, inspired by the seminal work of Jordan, Kinderlehrer and Otto \cite{jordan1998variational}, the Fokker-Planck equation associated to the (overdamped) Langevin dynamics can be viewed as the Wasserstein gradient flow of the KL-divergence \begin{equation*}
    \calF(\rho) := \KL(\rho|\pi) = \int_{\R^d}  \log \frac{\rho}{\pi}\ud \rho,
\end{equation*} 
which suggests that the Langevin dynamics can be seen as the steepest descent flow of the KL-divergence, along which the initial distribution flows towards the target distribution.
The gradient-flow perspective provides a way towards building new sampling dynamics by designing new objective functions or new metrics for the manifolds of probability measures. For example, the underdamped Langevin dynamics can be viewed as a Nesterov's accelerated  gradient descent on the space of probability measures \cite{ma2019there}.

\smallskip
 
Langevin dynamics converge exponentially fast to the Gibbs measure under the assumption that the target measure satisfies the Logarithmic Sobolev inequality \cite{bakry2014analysis}. Unfortunately, the convergence rate is limited by the optimal constant in the Log-Sobolev inequality, which can be very small when the target measure is multimodal. This  is because it can take exponentially long time for the dynamics to overcome the energy barriers and hop between the multiple modes.

\subsubsection{Birth-death dynamics, and their long-time convergence} 
The issues described above prompt the following question:

 {\em Can one construct a gradient-flow  dynamics for sampling that achieves a potential-independent convergence rate?}
  \smallskip
  
Recent work \cite{lu2019accelerating} by Lu, Nolen and one of the authors  gives an affirmative answer to the question by proposing the following birth-death dynamics for sampling 
\begin{equation}\label{eqn:purebd}\tag{BD}
    \partial_t \rho_t = -\rho_t \log \frac{\rho_t}{\pi}+ \rho_t \int_{\R^d} \rho_t \log \frac{\rho_t}{\pi}\ud x.
\end{equation} 
An equation of similar type on discrete space, known as Replicator equation, also appears in information geometry literature, see \cite[Chapter 6]{ay2017information} and references therein. Note that the dynamics \eqref{eqn:purebd} is agnostic to the normalization constant. It has been shown that \eqref{eqn:purebd} is a gradient flow of the KL-divergence with respect to 
 the spherical Hellinger distance\footnote{In information geometry literature \cite{amari2016information,ay2017information}, the spherical Hellinger distance is also known as the Fisher-Rao distance. On the other hand, in other works, for example \cite{chizat2018interpolating}, the terminology ``Fisher-Rao distance'' refers to the Hellinger distance, which is defined on positive measures. To avoid confusion and emphasize the fact that these two distances are defined on different spaces, we avoid using ``Fisher-Rao distance'' altogether in this work.} $d_{SH}$ defined by  \eqref{eqn:conicdSH} (see \cite{laschos2019geometric}*{Section 3}). 
Furthermore the authors show an exponential rate of convergence for initial data such that $\rho_0$ is bounded below by a positive multiple of $\pi$ and 
$\KL(\rho_0 | \pi) \le 1$. More recent work \cite{liu2022polyak}  established exponential convergence of \eqref{eqn:purebd} if $\frac{\rho_0}{\pi}$ is bounded from above and below.

\smallskip


In Theorem \ref{thm:expconvbdkl} we improve both results by showing that the solution $\rho_t$ contracts to the equilibrium with a uniform (potential-independent) rate from any $\rho_0$ that is only bounded from below, but not necessarily above with respect to $\pi$. The condition $\KL(\rho_0|\pi) \le 1$ is no longer required. This has an important  practical consequence, namely it shows that it is sufficient to start the dynamics with an initial density that is more spread out than $\pi$, which is easy to guarantee for many target measures $\pi$. The removal of upper bound also allows us to choose a sufficient wide round Gaussian to satisfy the pointwise lower bound for a large class of $\pi$ in $\R^d$.

\smallskip

In Section \ref{sec:chi} we also investigate birth-death dynamics
\begin{equation}\label{eqn:gfchi2} \tag{BD2}
    \partial_t \rho_t =-\rho_t\left( \frac{\rho_t}{\pi}- \int \frac{\rho_t}{\pi}\ud \rho_t\right)
\end{equation}
that arises as the spherical Hellinger gradient flow of $\chi^2$-divergence which is at the basis of the algorithm proposed in \cite{lindsey2021ensemble}. In particular we prove that the $\chi^2$-divergence between the dynamics and the target measure converges to zero exponentially fast, with a rate independent of the potential, see Theorem \ref{thm:expconvbdchi2}. This complements the result of \cite{lindsey2021ensemble}, which proved the convergence of the reverse $\chi^2$-divergence.


\subsubsection{Approximations to \eqref{eqn:purebd} that allow for discrete measures}
Since the equation \eqref{eqn:purebd} is not well-defined when $\rho$ is a discrete measure, it is unclear whether one can build interacting particle sampling schemes directly based on it. 
A principled way to build particle approximations to  \eqref{eqn:purebd} and \eqref{eqn:gfchi2} is to define new dynamics which approximate \eqref{eqn:purebd} and \eqref{eqn:gfchi2} and are well-defined for discrete measures. A further desirable property for these dynamics is to retain the (spherical) Hellinger gradient flow  structure.

\smallskip

In doing so we are inspired by the work of Carrillo, Craig, and Patacchini \cite{carrillo2019blob}, who considered the  Wasserstein gradient flow of the regularized KL-divergence as an approximation of the Fokker-Planck equation. In particular they introduced the regularized energy
\begin{equation*}
    \fve(\rho) = \int \rho \log (\Kve * \rho) - \int \rho\log \pi= \int \rho \log(\Kve * \rho) + \int \rho V. 
\end{equation*}

The gradient flow of $\fve$ with respect to Wasserstein metric, studied in \cite{carrillo2019blob}, is
\begin{equation} \label{eq:blob}
\partial_t \rhove_t = \nabla \cdot \left(\rho \nabla  \frac{\delta \fve}{\delta \rho}\right),
\end{equation}
 where $ \frac{\delta \fve}{\delta \rho}$ is the functional derivative of  $\fve$ defined by 
\begin{equation}\label{eqn:funcderivkl}
    \frac{\delta \fve}{\delta \rho} = \log \left(\frac{\Kve*\rho}{\pi} \right) + \Kve * \left( \frac{ \rho}{\Kve* \rho} \right). 
\end{equation}
For  discrete  initial data, i.e. $\rho_0 = \sum m_i \delta_{x_i}$, the solution remains a discrete measure for any positive time, where the evolution of particles is given by a system of ordinary differential equations.  It heuristically provides a deterministic particle approximation of the Fokker-Planck equation, though rigorous convergence analysis remains open.
\smallskip

The gradient flow of $\fve$ with respect to the spherical Hellinger distance is the equation
\begin{equation} \label{eq:gfeps} \tag{$\text{BD}_\varepsilon$}
 \!   \partial_t \rhove_t = -\rhove_t \left[\log \left(\frac{\Kve*\rhove_t}{\pi}\right) + \Kve * \left( \frac{ \rhove_t}{\Kve* \rhove_t} \right) - \! \int \log \left(\frac{\Kve*\rhove_t}{\pi}\right) \rhove_t -1 \right].
\end{equation}
Note that the right hand side is well-defined even if $\rhove$ is a discrete measure. This suggests the possibility of approximating   \eqref{eqn:gfkerbd} with interacting particles. Indeed, we will introduce and discuss in Section \ref{sec:jump} a particle-based jump process whose mean-field limit is heuristically characterized by equation \eqref{eqn:gfkerbd}  and present  some numerical experiments on its application for sampling in Section \ref{sec:numerics}. 
\smallskip

\emph{Bias of global minimizers of $\fve$.}
For sufficiently small $\varepsilon$, we expect $\fve$ to be only a small perturbation of $\calF$, and hence the global minimizers of $\fve$ should also be a perturbation of $\pi$.
In Section \ref{sec:minimizer-approx} we prove that such bias is at most of order $\varepsilon$, improving on the qualitative $\Gamma$-convergence result in \cite{carrillo2019blob}. More precisely, we show that for any minimizer $\pi_\varepsilon$ of $\fve$, the Wasserstein distance between $\pi_\varepsilon$ and $\pi$ is no more than the order  $O(\varepsilon)$. The optimality of such upper bound is demonstrated by numerical experiments. 

\smallskip

\emph{Convergence of the dynamics, on finite time intervals.}
In Section \ref{sec:gamma} we investigate the convergence of the solutions of 
\eqref{eq:gfeps} towards solutions of the pure birth-death dynamics \eqref{eqn:purebd}.
More precisely we use the $\Gamma$-convergence of gradient flows to show that on arbitrary finite time intervals,
smooth solutions of \eqref{eq:gfeps} with initial condition $\rho_0$ bounded below on a bounded domain converge
 to solutions of \eqref{eqn:purebd} as $\varepsilon \rightarrow 0$.  For unbounded domains there are substantial difficulties to handle the decay of $\pi$ at infinity, and proving $\Gamma$-convergence of gradient flows in such setting remains an open problem. 
\smallskip

\emph{Convergence of the dynamics, asymptotic states.}
We note that since $\Gamma$-convergence of gradient flows is in general stated for finite time intervals, and thus does not directly imply that the asymptotic states of the dynamics \eqref{eq:gfeps} converge towards the asymptotic state of \eqref{eqn:purebd}, namely  $\pi$. This is a general issue for the convergence of gradient flows and is of practical interest. Namely in many applications one uses approximate gradient flows with aim to approximate the limiting state of the original gradient flow. In Section \ref{sec:asymptotics} we investigate the relationship between $\Gamma$-convergence of gradient flows and the convergence of asymptotic states. Specifically in Proposition \ref{prop:asyset} we prove convergence of asymptotic states in the general setting of gradient flows in metric spaces. We apply the result in two settings, one is the setting of the gradient flows of this paper (Theorem \ref{thm:bdasysetwkconv}) and the other (Theorem \ref{thm:jmmasyconv}) is the convergence of two-layer neural networks studied in \cite{javanmard2020analysis} , which we discuss at the end of this section. 
In particular, we prove in Theorem  \ref{thm:bdasysetwkconv} that $\pi$ must be the only possible limit of the asymptotic states $\rhove_\infty$ in $W_2$.
 The proof is general and relies on the fact that $\pi$ is the unique minimizer of the KL divergence. 

\smallskip

\emph{Application to convergence of asymptotic states for 2-layer neural networks.}
In \cite{javanmard2020analysis}, the authors considered the problem of learning a strongly concave function $f$ using bump-like neurons where the width of the kernels $\delta\ll 1$. As the number of neurons approach infinity, the process of stochastic gradient descent with noise $\tau$ converges to the Wasserstein gradient flow of the following entropy-regularized risk functional
\begin{equation}\label{eqn:jmmfunctional} F^\delta(\rho^\delta) = \int_{\Omega} \left(\frac{1}{2}(K_\delta*\rho^\delta-f)^2+\tau \rho^\delta \log \rho^\delta\right) \ud x.\end{equation} 
Here $\Omega$ is a smooth, convex and compact domain. 
More precisely, the gradient flow equation writes 
\begin{equation}\label{eqn:jmmgfdelta}
    \partial_t \rho^\delta_t = \nabla \cdot (\rho_t^\delta \nabla \Psi) +\tau \Delta \rho_t^\delta, \, \textrm{ with }\Psi = -K^\delta*f + K^\delta*K^\delta*\rho_t^\delta.
\end{equation} The authors proved that as $\delta\to 0$, with suitable initial and boundary conditions, the solution of \eqref{eqn:jmmgfdelta} converges strongly in $L^2$ to the solution of the limiting gradient flow
\begin{equation}\label{eqn:jmmgf0}
     \partial_t \rho_t = \nabla \cdot (\rho_t \nabla (\rho_t-f)) +\tau \Delta \rho_t,
\end{equation} which is the Wasserstein gradient flow of the limiting functional 
\begin{equation}\label{eqn:jmmfcnl0}
    F(\rho) =\int_{\Omega} \left(\frac{1}{2}(\rho-f)^2+\tau \rho \log \rho\right) \ud x.
\end{equation}
Moreover, since \eqref{eqn:jmmfcnl0} is displacement convex with respect to Wasserstein geodesics, \eqref{eqn:jmmfcnl0} has a unique minimizer and \eqref{eqn:jmmgf0} converges exponentially to that minimizer as $t\to\infty$. The work \cite{javanmard2020analysis}, however, does not provide  results regarding the long-time behavior of the regularized gradient flow \eqref{eqn:jmmgfdelta}, which is the numerically approximated dynamics. Our Proposition \ref{prop:asyset} provides the tools to prove  convergence of limiting states, resulting in Theorem \ref{thm:jmmasyconv} below. For more results regarding the long-time convergence of such dynamics arising from the training of neural networks, we refer the readers to the recent works \cite{chizat2022mean, craig2022blob}.

\smallskip

In Section \ref{sec:numerics} we provide two numerical experiments, one on a toy example of 2-dimensional Gaussian mixture, another on a real-world Bayesian classification problem, to demonstrate the effectiveness of the birth-death algorithm. In both examples we observe that birth-death sampler allows significantly faster mixing of particles compared to Langevin dynamics or SVGD. More specifically, in the multimodal Example \ref{exp:toyGMM}, one can see in Figure \ref{fig:gmmparticles} that, once a high-probability mode is discovered, birth-death sampler will facilitate movement of particles towards this newly discovered mode, which helps overcoming the issue of metastability. In the real-world Example \ref{exp:BayesClass} where the non-convexity is not strong and SVGD works well, birth-death sampler can reach the equilibrium in an extremely short time. 

\subsection{Contributions}
We highlight the major contributions of the present paper as follows: 

\begin{itemize}
    \item We prove that the pure birth-death dynamics \eqref{eqn:purebd} converges globally to its unique equilibrium measure $\pi$  with a uniform rate with respect to KL-divergence, improving the results in \cite{lu2019accelerating, liu2022polyak}. See Theorem \ref{thm:expconvbdkl} for the precise statements. Using similar techniques, we also investigate the algorithm proposed in \cite{lindsey2021ensemble}, and prove that their time-rescaled infinite-particle equation converges in $\chi^2$-divergence  converges exponentially with a rate independent of the potential, see Theorem \ref{thm:expconvbdchi2}.
    \item We show that under suitable conditions, any global minimizer of the regularized energy \eqref{eqn:funckerkl} $\pi_\varepsilon$ is $O(\varepsilon)$ close to $\pi$ under $W_2$ distance. The precise statement can be found in Theorem \ref{thm:quanminconv}.
\item We show in Theorem \ref{thm:bdgammaconv} that smooth solutions of the kernelized dynamics \eqref{eq:gfeps} with densities bounded above and below on torus $\Gamma$-converges to the  pure birth-death dynamics \eqref{eqn:purebd} within any finite time-horizon in the limit of small kernel width. As a corollary, this justifies the convergence of the density $\rhove_{t}$ of the kernelized dynamics with width $\varepsilon$ to the target measure $\pi$ as $\varepsilon \rightarrow 0$ and $t\rightarrow \infty$. 
\item Finally, we show in Theorem \ref{thm:bdasysetwkconv} that on torus, the long-time limit of \eqref{eqn:gfkerbd} converges with respect to Hausdorff distance corresponding to the Wasserstein distance.
\end{itemize} 

\subsection{Related works} 
The pure birth-death dynamics \eqref{eqn:purebd}  is a gradient flow on relative entropy with respect to the the spherical Hellinger distance we  define in \eqref{eqn:conicdSH}. 
The mass-conservative metric \eqref{eqn:conicdSH}  was introduced in \cite{brenier2020optimal, kondratyev2019spherical, laschos2019geometric} and used in the study equations modeling fluids and population dynamics. Later it was applied in the analysis of training process of neural networks \cite{rotskoff2019global,wei2019regularization}. In the context of statistical sampling, the  spherical Hellinger metric was first applied by \cite{lu2019accelerating} to accelerate Langevin dynamics for sampling, where a local exponential convergence (Theorem \ref{thm:oldconv}) was proved. The paper \cite{gabrie2022adaptive} uses the idea of birth-death dynamics to improve the training of normalizing flows that learn the target distribution. The recent paper \cite{lindsey2021ensemble} constructs an ensemble MCMC algorithm  whose mean field evolution is given by the spherical Hellinger gradient flow of the $\chi^2$-divergence (see  \eqref{eqn:gfchi2}). Convergence to the equilibrium was also established  therein on a finite state space. 
The construction of birth-death dynamics \eqref{eqn:purebd} is also related to the recent study on unbalanced optimal transport and associated gradient flows. In particular, the unbalanced transportation metric, called the Hellinger-Kantorovich metric, which interpolates between 2-Wasserstein and Hellinger metric, 
was defined and studied in \cite{kondratyev2016new,liero2018optimal,chizat2018interpolating} and allows for transport between measures with different masses.
\smallskip

Ensemble-based sampling methods have also been widely studied in recent years, which is another important motivation of our work. Ensemble-based sampling allows for global view of the particle configurations and enables for the particles to exchange information.
One of the most successful sampling methods in this category is the affine invariant sampler introduced by Goodman and Weare \cite{goodman2010ensemble}; see also  \cite{foreman2013emcee}. Ensemble-based sampling are also related to sequential Monte Carlo \cite{del2006sequential} and importance sampling \cite{bunch2016approximations, reich2013guided}. In a continuous-time point of view, ensemble-based samplers can be developed via interacting particle systems, examples of which include ensemble Kalman methods \cite{reich2011dynamical, garbuno2020interacting}, consensus based sampling \cite{carrillo2022consensus} (which also has ideas from optimization \cite{pinnau2017consensus, carrillo2018analytical}), and ensemble Langevin dynamics \cite{liu2022second}. 

\smallskip

We are particularly interested in sampling approaches that are defined as gradient flows of a functional measuring the difference from the target measure. There are a variety of functionals and metrics considered. 
Blob particle method \cite{carrillo2019blob} is the Wasserstein gradient flow of the regularization of KL divergence that allows for discrete measures where it becomes an interacting particle system. Such viewpoint has been applied to sampling purposes in \cite{craig2022blob} where the authors considered the Wasserstein gradient flow for regularized $\chi^2$ energy. A different approach to create gradient-flow based interacting-particle systems for sampling is the 
 SVGD introduced in \cite{liu2016stein}. There the authors consider the gradient flow of the standard KL-divergence with respect to a metric (now known as the Stein geometry) which requires smoothness of the velocities. Thus the gradient-flow velocity makes sense even when considered for particle measures  \cite{liu2017stein,lu2019scaling,duncan2019geometry}.
The work \cite{chewi2020svgd} provides a new perspective on SVGD by viewing it as a kernelized gradient flow of the $\chi^2$-divergence.
A further direction of research considers  Wasserstein gradient flows of the distance to the target measure in a very weak metric that is well defined for particles; in particular Kernelized Stein Discrepancy \cite{korba2021kernel}.
  Recently the work \cite{lambert2022variational} considered using Wasserstein gradient flow for variational inference, where they use Gaussian mixtures to approximate the target density with the evolution of mean and variance governed by gradient flows.


\section{Pure birth-death dynamics governed by relative entropy}
Let us first introduce the Benamou-Brenier formulation of the Hellinger distance (the distance plays an important role in information geometry, see for example \cite{amari2016information,ay2017information})
on (not necessarily probability) measures
\begin{equation}\label{eqn:ubfisherrao}
    d^2_H(\rho_0,\rho_1) = \inf_{(\rho_t,u_t)} \int_0^1 \int_{\R^d} u_t^2 \ud \rho_t \ud t,
\end{equation} 
where $(\rho_t,u_t)$ satisfies the equation 
\[\partial_t \rho_t = -\rho_t u_t.\] 
If measures $\rho_0,\rho_1 \ll \lambda$ for some probability measure $\ud \lambda(x)$, then one can explicitly compute  the minimal cost in \eqref{eqn:ubfisherrao} and obtain
\begin{equation}\label{eqn:ubHellinger}
    d^2_H(\rho_0,\rho_1) = 4\int_{\R^d} \left(\sqrt{\frac{\ud \rho_1}{\ud \lambda}}-\sqrt{\frac{\ud \rho_0}{\ud \lambda}}\right)^2 \ud \lambda.
\end{equation}
Moreover, this expression does not depend on the specific choice of $\lambda$. Indeed, substituting $u_t = -\frac{\partial_t \ud \rho_t/\ud \lambda}{\ud \rho_t/ \ud \lambda}$ into \eqref{eqn:ubfisherrao}, we have
\begin{align*}\int_0^1 \int_{\R^d} u_t^2 \ud \rho_t \ud t & = \int_0^1 \int_{\R^d} \left(\frac{\partial_t \ud \rho_t/\ud \lambda}{\ud \rho_t/\ud \lambda}\right)^2 \ud \rho_t \ud t  = 4 \int_0^1 \int_{\R^d} \left(\partial_t \sqrt{\frac{\ud\rho_t}{\ud \lambda}}\right)^2 \ud \lambda(x) \ud t \\ & \ge 4 \int_{\R^d} \left( \int_0^1\partial_t\sqrt{\frac{\ud\rho_t}{\ud \lambda}}\ud t \right)^2 \ud \lambda(x)= 4\int_{\R^d} \left(\sqrt{\frac{\ud \rho_1}{\ud \lambda}}-\sqrt{\frac{\ud \rho_0}{\ud \lambda}}\right)^2 \ud \lambda.\end{align*} Equality is obtained when 
\begin{equation}\label{eqn:geodesicdH}
    \sqrt{\frac{\ud \rho_t}{\ud \lambda}} = (1-t)\sqrt{\frac{\ud \rho_0}{\ud \lambda}}+t\sqrt{\frac{\ud \rho_1}{\ud \lambda}}.
\end{equation}

\smallskip

From the expression \eqref{eqn:ubHellinger} we can derive immediately that for probability measures $d_H(\rho_0,\rho_1) \le 2\sqrt{2}$.
We note that 
\[ d_H^2(r_0^2 \rho_0, r_1^2 \rho_1) = r_0 r_1 d_H^2(\rho_0,\rho_1) + 4(r_0 -r_1)^2 \] 
and hence $d_H$ is a cone geodesic distance on the space of positive measures satisfying  \cite[(2.1)]{laschos2019geometric}. Thus from Theorem 2.2 and Corollary 2.3 of \cite{laschos2019geometric} (see also \cite[Chapter 2]{ay2017information}) it follows that the spherical Hellinger distance, 
which is obtained by restricting the configurations and paths to probability measures and considering the same path lengths, is given by\footnote{The paper \cite{kondratyev2019spherical} gives an alternative definition of $d_{SH}$ using Benamou-Brenier formulation $\tilde d_{SH}^2(\rho_0,\rho_1) = \inf_{(\rho_t,u_t)} \int_0^1 \int_{\R^d} (u_t-\int \rho_t u_t)^2 \ud \rho_t \ud t$ with geodesic equation $\partial_t \rho_t = -\rho_t (u_t-\int \rho_t u_t)$. As we do not use this formulation, we just remark that showing the equivalence is straightforward. In particular by definition the distances $\tilde d_{SH} \geq d_{SH}$. On the other hand it is direct to check that for the geodesic paths w.r.t $d_{SH}$, identified in  \cite[Theorem 2.7]{laschos2019geometric}, the length w.r.t $\tilde d_{SH}$ is the same as w.r.t $d_{SH}$.
}
\begin{equation} \label{eqn:conicdSH}
d_{SH} (\rho_0,\rho_1) = 2\arccos\Big(1-\frac{d_H^2(\rho_0,\rho_1)}{8}\Big) = 4\arcsin\Big(\frac{d_H(\rho_0,\rho_1)}{4} \Big).
\end{equation}
This implies that $d_{SH} \leq \pi$ and furthermore $d_{SH}(\rho_0, \rho_1) = \pi$ if and only if $\rho_0$ and $\rho_1$ are orthogonal measures.
From the definition \eqref{eqn:conicdSH} one can also observe immediately \begin{equation}\label{eqn:dHdSHequiv}
   d_{SH}(\rho_0,\rho_1) \ge d_H(\rho_0,\rho_1) \, \textrm{ and } \lim_{d_H(\rho_0,\rho_1) \to 0}\frac{d_{SH}(\rho_0,\rho_1)}{d_H(\rho_0,\rho_1)}=1.  
\end{equation}
Furthermore $(\mathcal{P}(\R^d), d_{SH})$ is a geodesic metric space and \cite[Theorem 2.7]{laschos2019geometric} identifies the geodesics, based on geodesics w.r.t $d_H$ in the cone of positive measures. 

\smallskip

The distances $d_H,d_{SH}$ metrize strong convergences of measures, which we present in the lemma below.
\begin{lemma}\label{lem:equivdHL1}
    Suppose $\{\rho_n\}_{n=1}^\infty$ and $\rho$ are measures on $\R^d$ and are all absolutely continuous with respect to some measure $\lambda$. Suppose also that $\rho$ has finite total mass. Then 
    \begin{equation}\label{eqn:equivdHL1}
        \lim_{n\to\infty} d_H(\rho_n,\rho) = 0 \iff \frac{\ud \rho_n}{\ud \lambda} \xrightarrow{L^1(\ud\lambda)} \frac{\ud \rho}{\ud \lambda}.
    \end{equation}
    As a consequence of \eqref{eqn:dHdSHequiv}, if we further assume $\rho_n,\rho$ are probability measures on $\R^d$, then \begin{equation}\label{eqn:equivdSHL1}
        \lim_{n\to\infty} d_{SH}(\rho_n,\rho) = 0 \iff \frac{\ud \rho_n}{\ud \lambda} \xrightarrow{L^1(\ud\lambda)} \frac{\ud \rho}{\ud \lambda}.
    \end{equation}
\end{lemma}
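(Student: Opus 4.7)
The plan is to use the explicit formula \eqref{eqn:ubHellinger} to reduce the equivalence \eqref{eqn:equivdHL1} to the statement that, for nonnegative densities $f_n = d\rho_n/d\lambda$ and $f = d\rho/d\lambda$ with $\int f\, d\lambda < \infty$, one has $\sqrt{f_n} \to \sqrt{f}$ in $L^2(d\lambda)$ if and only if $f_n \to f$ in $L^1(d\lambda)$. Indeed \eqref{eqn:ubHellinger} gives
\[
d_H^2(\rho_n,\rho) \;=\; 4\int_{\R^d}(\sqrt{f_n}-\sqrt{f})^2\,d\lambda \;=\; 4\,\|\sqrt{f_n}-\sqrt{f}\|_{L^2(d\lambda)}^2,
\]
so $d_H$-convergence is literally $L^2$-convergence of the square roots.

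For the easy direction ($L^1 \Rightarrow L^2$ of roots), I would use the pointwise inequality $(\sqrt{a}-\sqrt{b})^2 \le |a-b|$ valid for $a,b\ge 0$, which immediately gives $\|\sqrt{f_n}-\sqrt{f}\|_{L^2(d\lambda)}^2 \le \|f_n - f\|_{L^1(d\lambda)}$. For the reverse direction I would use the factorization
\[
|f_n - f| \;=\; |\sqrt{f_n}-\sqrt{f}|\,(\sqrt{f_n}+\sqrt{f}),
\]
and apply Cauchy--Schwarz to get
\[
\|f_n - f\|_{L^1(d\lambda)} \;\le\; \|\sqrt{f_n}-\sqrt{f}\|_{L^2(d\lambda)}\,\|\sqrt{f_n}+\sqrt{f}\|_{L^2(d\lambda)}.
\]
The remaining point is to bound $\|\sqrt{f_n}+\sqrt{f}\|_{L^2(d\lambda)}$ uniformly in $n$. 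This follows from the $L^2$-convergence itself: by the triangle inequality $\|\sqrt{f_n}\|_{L^2(d\lambda)} \le \|\sqrt{f_n}-\sqrt{f}\|_{L^2(d\lambda)} + \|\sqrt{f}\|_{L^2(d\lambda)}$, and $\|\sqrt{f}\|_{L^2(d\lambda)}^2 = \int f\,d\lambda = \rho(\R^d) < \infty$ by hypothesis. Hence $\|\sqrt{f_n}\|_{L^2(d\lambda)}$ is bounded, the second factor is controlled, and the first factor tends to zero, yielding $L^1$-convergence of the densities. A subtle observation worth noting is that this step is precisely where the hypothesis that $\rho$ has finite total mass is used.

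For the probability-measure version \eqref{eqn:equivdSHL1}, I would simply invoke \eqref{eqn:dHdSHequiv}: since $d_{SH} \ge d_H$ and $d_{SH}/d_H \to 1$ as $d_H \to 0$, convergence to zero of one distance is equivalent to that of the other, so the equivalence transfers directly from \eqref{eqn:equivdHL1}.

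No step is particularly hard; the only thing to watch is the uniform bound on $\|\sqrt{f_n}\|_{L^2(d\lambda)}$ in the $(\Rightarrow)$ direction, which is the one place where finiteness of $\rho(\R^d)$ enters. The whole argument is independent of the choice of dominating measure $\lambda$, consistent with the remark following \eqref{eqn:ubHellinger} that the Hellinger expression does not depend on $\lambda$.
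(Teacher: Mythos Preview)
Your proposal is correct and follows essentially the same approach as the paper: the paper proves \eqref{eqn:equivdHL1} by invoking the two-sided inequality
\[
d_{H}^2 (\rho_1,\rho_2)  \leq \|\rho_1 - \rho_2\|_{L^1(d\lambda)} \leq \Big(\sqrt{\|\rho_1\|_{L^1(d\lambda)}} + \sqrt{\|\rho_2\|_{L^1(d\lambda)}}\Big) d_{H} (\rho_1,\rho_2),
\]
citing \cite{steerneman1983total}, which is precisely the pair of estimates you derive via $(\sqrt{a}-\sqrt{b})^2\le|a-b|$ and Cauchy--Schwarz on the factorization $|f_n-f|=|\sqrt{f_n}-\sqrt{f}|(\sqrt{f_n}+\sqrt{f})$. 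Your treatment of the uniform bound on $\|\sqrt{f_n}\|_{L^2(d\lambda)}$ and the passage to \eqref{eqn:equivdSHL1} via \eqref{eqn:dHdSHequiv} match the paper's reasoning as well.
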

\begin{proof}
    Thanks to \eqref{eqn:dHdSHequiv}, it suffices prove \eqref{eqn:equivdHL1}, which is a direct consequence of the following inequalities (see \cite[Theorem 2.1]{steerneman1983total}): for any $\rho_1, \rho_2 \ll \lambda$,
    \[
d_{H}^2 (\rho_1,\rho_2)  \leq \|\rho_1 - \rho_2\|_{L^1(d\lambda)} \leq \Big(\sqrt{\|\rho_1\|_{L^1(d\lambda)}} + \sqrt{\|\rho_2\|_{L^1(d\lambda)}}\Big) d_{H} (\rho_1,\rho_2).
    \]

\end{proof}

Before presenting the main results of this section, let us state the general assumptions of $\pi$ that we assume throughout this work.
\begin{assumption}\label{ass:genpi}
    The invariant measure $\pi$ and initial condition $\rho_0$ are absolutely continuous with respect to the Lebesgue measure and have density functions $\pi(x),\rho_0(x) $. Let 
    \[\Omega :=\{x\in \R^d, \, \pi(x)>0 \}. \] 
    We require that $\rho_0>0$ in $\Omega$ and $\rho_0=0$ in $\Omega^c$.
\end{assumption}

The pure birth-death dynamics \eqref{eqn:purebd} is
the $d_{SH}$-gradient flow of relative entropy $\KL(\cdot|\pi)$. Under sufficient regularity hypotheses, for any energy functional $\mathcal{G}$, the $d_{SH}$-gradient flow of $\mathcal{G}$ has the form
\begin{equation}\label{eqn:genFRgf}
    \partial_t \rho_t = -\rho_t\left(\frac{\delta \mathcal{G}}{\delta \rho} - \int_{\R^d} \frac{\delta \mathcal{G}}{\delta \rho} \ud \rho_t\right).
\end{equation}
Here $\frac{\delta \mathcal{G}}{\delta \rho}$ is the first variation density of $\mathcal{G}$ at $\rho$ \cite{ambrosio2008gradient, carrillo2006contractions}. 
\smallskip

The following lemma shows the well-posedness of \eqref{eqn:purebd} whenever $\rho_0>0$ on $\Omega$. In addition, the proof reveals the deeper structure of \eqref{eqn:purebd} which indicates exponential convergence  to $\pi$. We prove the convergence rate in Theorem \ref{thm:expconvbdkl}. We note that similar discussion is carried out in the proof of Theorem 2.1 in \cite{liu2022polyak}. 
\begin{lemma}\label{lem:wellposeeps0}
Suppose $\rho_0,\pi$ satisfies Assumption \ref{ass:genpi}, and $\KL(\rho_0|\pi)<\infty$, then there exists a unique solution of \eqref{eqn:purebd}  $\rho \in  C^1\big([0,\infty),L^1(\Omega)\cap\mathcal{P}(\Omega)\big)$, where the differentiability is with respect to $L^1$ norm and which dissipates the $\KL$-divergence. 
\end{lemma}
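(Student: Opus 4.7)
The plan is to exploit the absence of spatial derivatives in \eqref{eqn:purebd}. Writing $u_t(x) := \rho_t(x)/\pi(x)$ on $\Omega$ and dividing \eqref{eqn:purebd} through by $\pi$ turns it into the pointwise relation
$$\partial_t u_t(x) = -u_t(x)\bigl(\log u_t(x) - E(t)\bigr), \qquad E(t) := \int_\Omega u_t \log u_t \, d\pi = \KL(\rho_t|\pi),$$
and the further substitution $v_t := \log u_t$ linearizes this into $\partial_t v_t = -v_t + E(t)$ at each $x$. Solving this ODE explicitly and then using the mass constraint $\int u_t \, d\pi = 1$ to pin down the forcing yields the closed-form candidate
$$u_t(x) = \frac{u_0(x)^{e^{-t}}}{Z(t)}, \qquad Z(t) := \int_\Omega u_0(x)^{e^{-t}} \, d\pi(x),$$
and I would take $\rho_t := u_t\,\pi$ as the proposed solution.

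For existence, I would first check that this formula is well-defined: concavity of $r \mapsto r^{e^{-t}}$ together with Jensen's inequality gives $0 < Z(t) \le 1$ for $t \ge 0$, with strict positivity coming from the hypothesis $\rho_0 > 0$ on $\Omega$, i.e.\ $u_0 > 0$ $\pi$-a.e. Direct differentiation then shows $\rho_t$ satisfies \eqref{eqn:purebd} and remains a probability density. The $t$-regularity of $Z$ and of the ansatz requires exchanging $\partial_t$ with the $\pi$-integral; the candidate dominating functions are $u_0 \lvert \log u_0\rvert \mathbf{1}_{\{u_0 \ge 1\}}$, which is in $L^1(\pi)$ thanks to $\KL(\rho_0|\pi) < \infty$, together with a $t$-uniform bound on $\{u_0 < 1\}$ that is easy to obtain on any compact time interval.

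The $\KL$-dissipation claim then follows from a short computation using $\int \partial_t u_t \, d\pi = 0$:
$$\frac{d}{dt} \KL(\rho_t|\pi) = -\int_\Omega u_t \bigl(\log u_t - E(t)\bigr)^2 \, d\pi \;\le\; 0,$$
which is precisely the negative of the squared $d_{SH}$-metric derivative of $\rho_t$, confirming both entropy dissipation and the gradient-flow structure announced before the lemma.

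For uniqueness, any $\tilde{\rho} \in C^1\bigl([0,\infty), L^1(\Omega) \cap \mathcal{P}(\Omega)\bigr)$ solving \eqref{eqn:purebd} has $\tilde{u}_t := \tilde{\rho}_t/\pi$ satisfying the pointwise-in-$x$ ODE above. A Tonelli argument applied to the integrated equation $\tilde{u}_t - u_0 = \int_0^t \partial_s \tilde{u}_s \, ds$, which is valid in $L^1(\pi)$, produces a $\pi$-full-measure set of points $x$ at which the time ODE holds for all $t \ge 0$; strict positivity $u_0 > 0$ propagates through the integrated ODE, so $\log \tilde{u}_t$ is well-defined pointwise. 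Integrating the linear ODE in $v_t := \log \tilde u_t$ yields $\log \tilde{u}_t(x) = e^{-t} \log u_0(x) + \tilde{\alpha}(t)$ with $\tilde{\alpha}$ a function of $t$ alone, and the mass constraint $\int \tilde u_t\,d\pi = 1$ forces $e^{\tilde{\alpha}(t)} = 1/Z(t)$, recovering the candidate above. The step I expect to be the most delicate is precisely this one: converting $C^1([0,\infty), L^1)$ regularity into sufficient pointwise-in-$x$ structure to solve the ODE $x$-by-$x$ and to justify taking logarithms. As a fallback, one can derive an $L^1$-stability estimate between two solutions directly via Gronwall, using the local Lipschitz behaviour of $r \mapsto r\log r$ on sub-intervals of $(0,\infty)$ obtained by truncating $\{u_t < \delta\} \cup \{u_t > M\}$ and then sending $\delta \to 0$, $M \to \infty$ using the explicit a priori bounds supplied by the candidate formula.
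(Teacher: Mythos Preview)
Your proposal is correct and follows essentially the same route as the paper: both arguments reduce \eqref{eqn:purebd} to the pointwise linear ODE $\partial_t v_t = -v_t + E(t)$ for $v_t = \log(\rho_t/\pi)$, solve it to obtain the explicit formula $\rho_t = \pi\,(\rho_0/\pi)^{e^{-t}}\Psi_t$, fix $\Psi_t$ via the mass constraint, and verify existence and $\KL$-dissipation by direct substitution. Your treatment is in fact more detailed on the technical points (Jensen for $Z(t)\le 1$, dominated convergence for the time-regularity, the Tonelli/fallback Gronwall argument for uniqueness) than the paper's rather brief proof.
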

\begin{proof}
Assume that $\rho \in C^1\big([0,\infty),L^1(\Omega)\cap\mathcal{P}(\Omega)\big)$ is a $\KL$-dissipating solution of \eqref{eqn:purebd}. Then $\rho = 0$  a.e. in space and time outside $\Omega$. In $\Omega$ we have for  $\rho$-a.s. $x$, the function $\eta_t = \log \frac{\rho_t}{\pi}$  satisfies the equation 
\begin{equation}\label{eqn:eqnetat}
    \partial_t \eta_t(x) = -\eta_t(x) + \KL(\rho_t|\pi).
\end{equation}Note that $\KL(\rho_t|\pi)$ depends only on $t$ and is bounded. Thus, using the theory of linear ODEs, should a solution of \eqref{eqn:eqnetat} exist, it has to be of the form
\[    \eta_t(x)  = \eta_0(x) e^{-t}+\psi_t. \] 
Taking exponential, we have 
\begin{equation}\label{eqn:purebdsoln}
    \rho_t(x) = \pi(x) \, \left(\frac{\rho_0(x)}{\pi(x)}\right)^{e^{-t}}\Psi_t.
\end{equation}
where $\Psi_t = e^{\psi_t}$. Since the solution of \eqref{eqn:purebd} must be a probability density for all $t$, we have $\Psi_t^{-1} = \int_{\Omega}  \left(\frac{\rho_0}{\pi}\right)^{e^{-t}} \, d\pi$, which is uniquely determined by $\rho_0$ and $\pi$. Also, $\Psi_t$ must be finite and positive since by H\"older's inequality, $\rho_0^{e^{-t}} \pi^{1-e^{-t}} \in L^1(\Omega)$ and $\int_{\Omega} \rho_0^{e^{-t}} \pi^{1-e^{-t}} \le 1$. This means the equation \eqref{eqn:purebd} has at most one solution. Finally we can verify the existence of a solution by substituting the expression \eqref{eqn:purebdsoln} into \eqref{eqn:purebd}. Direct computation also verifies that $\KL$ divergence is noninceasing. 
\smallskip

\end{proof}

Before stating our result we recall the convergence result of \cite{lu2019accelerating}. 
\begin{theorem} \cite[Theorem 3.3]{lu2019accelerating} \label{thm:oldconv}
Suppose $\rho_0,\pi$ satisfies Assumption \ref{ass:genpi}. Let $\rho_t$ be the solution of \eqref{eqn:purebd} with the initial condition $\rho_0$ satisfying $\KL(\rho_0 | \pi) \leq 1$ and that 
\begin{equation}\label{eq:lowbd}
    \inf_{x \in \Omega} \frac{\rho_0(x)}{\pi(x)} \geq e^{-M}
\end{equation} for some $M>0$. Then 
$$
\KL(\rho_t | \pi) \leq e^{-(2-3\delta)(t-t_\ast)} \KL(\rho_0 | \pi)
$$
for every $\delta \in (0,1/4)$ and all $t\geq t_\ast := \log(M/\delta^3)$.
\end{theorem}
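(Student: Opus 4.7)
The plan is to combine the entropy dissipation identity along \eqref{eqn:purebd} with a quantitative Poincar\'e-type inequality, and then to leverage the explicit representation from Lemma \ref{lem:wellposeeps0} to ensure the hypothesis of that inequality is met once $t \geq t_\ast$. Setting $\eta_t := \log(\rho_t/\pi)$ and differentiating $\KL(\rho_t|\pi) = \int \rho_t \eta_t\ud x$ along \eqref{eqn:purebd}, the mass conservation identity $\int \partial_t \rho_t\ud x = 0$ reduces the computation to
\[
\frac{d}{dt}\KL(\rho_t|\pi) = -\int \rho_t \eta_t^2\ud x + \KL(\rho_t|\pi)^2 = -\mathrm{Var}_{\rho_t}(\eta_t).
\]
It therefore suffices to establish $\mathrm{Var}_{\rho_t}(\eta_t) \geq (2-3\delta)\KL(\rho_t|\pi)$ for all $t \geq t_\ast$, after which Gr\"onwall's lemma and the monotone decrease of $t\mapsto \KL(\rho_t|\pi)$ (which follows from the non-negativity of the variance) together yield the claim.

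Next I would use the explicit formula from Lemma \ref{lem:wellposeeps0} to exhibit a quantitative smallness parameter. Writing $\eta_t = e^{-t}\eta_0 + \log \Psi_t$, the hypothesis \eqref{eq:lowbd} gives $\eta_0 \geq -M$, hence the centered log-density $\phi_t := \eta_t - \log\Psi_t = e^{-t}\eta_0$ satisfies the pointwise lower bound $\phi_t \geq -\delta^3$ as soon as $t \geq t_\ast = \log(M/\delta^3)$. A complementary upper control on $\phi_t$ cannot be obtained pointwise but must be extracted indirectly: the normalization $\int e^{\phi_t}\ud \pi = \Psi_t^{-1}$ together with the bound $\KL(\rho_0|\pi) \leq 1$ and the monotone decay of KL constrains the positive part of $\phi_t$ to be small in a $\pi$-integrated (and $\rho_t$-integrated) sense.

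The core analytic step is then the Poincar\'e-type inequality. Writing $u = \rho_t/\pi = e^{\phi_t}\Psi_t$ and $v := u-1$ (so $\int v\ud \pi = 0$), a second-order Taylor expansion of $s\log s$ around $s = 1$ yields $\KL(\rho_t|\pi) = \tfrac{1}{2}\int v^2 \ud \pi + R_1$ and $\mathrm{Var}_{\rho_t}(\eta_t) = \int v^2 \ud \pi + R_2$ with cubic-order remainders $R_1, R_2$. At leading order this gives the factor of $2$; the $3\delta$ loss arises from bounding $|R_1|, |R_2| \lesssim \delta \int v^2\ud \pi$ using the smallness of $\phi_t$ obtained in the previous step, together with a direct comparison between $\pi$ and $\rho_t$ (which differ by the factor $e^{\phi_t} \Psi_t = 1 + O(\delta)$ on the relevant region).

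The hard part will be precisely this last estimate: since $\phi_t$ is only bounded below pointwise while its upper bound is only integrated, the cubic remainders cannot simply be controlled by an $L^\infty$ norm of $\phi_t$ and must instead be absorbed using the joint integrability of $v$ against both $\pi$ and $\rho_t$, with a careful splitting between the region where $\phi_t$ is small and its (small-measure) complement. It is in tracing the dependence of these remainder estimates on $\delta$ that the explicit threshold $t_\ast = \log(M/\delta^3)$ and the rate constant $2 - 3\delta$ are both optimized.
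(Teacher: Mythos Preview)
The paper does not supply its own proof of this statement: Theorem~\ref{thm:oldconv} is quoted verbatim from \cite[Theorem 3.3]{lu2019accelerating} as background before the authors state and prove their improvement, Theorem~\ref{thm:expconvbdkl}. So there is no in-paper proof to compare against.

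That said, your outline is the right one and matches the argument in \cite{lu2019accelerating}. The dissipation identity $\frac{d}{dt}\KL(\rho_t|\pi) = -\mathrm{Var}_{\rho_t}(\eta_t)$ is correct, the reduction to the functional inequality $\mathrm{Var}_{\rho_t}(\eta_t) \ge (2-3\delta)\KL(\rho_t|\pi)$ is exactly the mechanism, and your use of the explicit formula $\phi_t = e^{-t}\eta_0 \ge -M e^{-t}$ to get $\phi_t \ge -\delta^3$ once $t \ge \log(M/\delta^3)$ is the way the threshold $t_\ast$ enters. The Taylor-expansion heuristic producing the leading factor $2$ is also the correct intuition. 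Where you are appropriately cautious is the remainder control: in \cite{lu2019accelerating} this is handled not by a global $L^\infty$ bound on $\phi_t$ (which is unavailable from above) but by a case split on the size of $\eta_t$, using the assumption $\KL(\rho_0|\pi)\le 1$ to control the contribution of the region where $\eta_t$ is large; the $\delta^3$ (rather than $\delta$) in $t_\ast$ arises from balancing several such error terms. Your sketch anticipates this difficulty accurately even if it does not resolve it.
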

We note that the above theorem requires the condition $\KL(\rho_0 | \pi) \leq 1$; some result would still hold for $\KL(\rho_0 | \pi)< 2$ but not for larger bounds. The result in \cite{liu2022polyak} removes the $\KL(\rho_0|\pi)\le 1$ condition, but they also requires a pointwise upper bound for $\frac{\rho_0}{\pi}$.

\smallskip

We now state our main results that improve the conditions for convergence above by removing the requirement that $\KL(\rho_0 | \pi) \leq 1$ with the only assumption \eqref{eqn:ratiolowerbd}. Furthermore our second result establishes that 
$\KL(\rho_t|\pi)$ contracts exponentially fast to 0 at all times $t \geq 0$.
We remark that asymptotically our rate becomes slower than the one in 
Theorem \ref{thm:oldconv}; once our bounds ensure that $\KL(\rho_t | \pi) \leq 1$ one can apply the results of the above theorem. 

\begin{theorem}\label{thm:expconvbdkl}
Under the assumptions of Lemma \ref{lem:wellposeeps0}, and let $\rho_t$ satisfy the pure birth-death dynamics \eqref{eqn:purebd} with initial condition $\rho_0 \in L^1(\Omega) \cap \mathcal{P}(\Omega)$. Then for any $\rho_0$ satisfying  \begin{equation}\label{eqn:ratiolowerbd}
    \inf_{x\in \Omega} \frac{\rho_0(x)}{\pi(x)} \ge e^{-M},
\end{equation} for some constant $M$, we have for all $t>0$ \begin{equation} \label{eqn:expbdrt1}
    \KL(\rho_t|\pi) \le Me^{-t} + e^{-t+Me^{-t}} \KL(\rho_0|\pi),
\end{equation}
as well as \begin{equation}\label{eqn:conv2ov9}
    \KL(\rho_t|\pi) \le \exp\left(-\int_0^t \lambda(s)\ud s\right) \KL(\rho_0|\pi), \,
 \textrm{ with } \, 
    \lambda(t)= \frac{M^2 e^{-2t}}{9e^{Me^{-t}}(e^{Me^{-t}}-Me^{-t}-1)}.
\end{equation} 
\end{theorem}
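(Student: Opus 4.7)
The plan is to leverage the explicit formula $\rho_t = \pi\, e^{s\eta_0}/Z_s$ provided by Lemma~\ref{lem:wellposeeps0}, where $s := e^{-t}$, $\eta_0 := \log(\rho_0/\pi) \geq -M$, and $Z_s := \int_\Omega e^{s\eta_0}\,d\pi$, to reduce both bounds to estimates on $Z_s$ and on moments of $\eta_0$ under $\rho_t$.

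For \eqref{eqn:expbdrt1} I would split
\[
\KL(\rho_t|\pi) \;=\; s\!\int \eta_0\,d\rho_t \;-\; \log Z_s,
\]
bound $-\log Z_s \leq sM$ via $Z_s \geq e^{-sM}$ (which follows from $\eta_0 \geq -M$), and rewrite the first term using $d\rho_0 = e^{\eta_0}d\pi$ as
\[
\int \eta_0\,d\rho_t \;=\; Z_s^{-1}\!\int \eta_0\, e^{(s-1)\eta_0}\,d\rho_0.
\]
The key elementary pointwise inequality is $\eta_0\, e^{(s-1)\eta_0} \leq \eta_0$ for every $\eta_0 \in \mathbb{R}$ and $s \in [0,1]$: on $\{\eta_0 \geq 0\}$ the factor $e^{(s-1)\eta_0} \leq 1$ shrinks a nonnegative quantity, while on $\{\eta_0 < 0\}$ the same factor is $\geq 1$ and drives a negative quantity further down. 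Integrating and combining with $Z_s^{-1} \leq e^{sM}$ gives $\int \eta_0\,d\rho_t \leq e^{sM}\KL(\rho_0|\pi)$, and \eqref{eqn:expbdrt1} follows by assembling the two estimates.

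For \eqref{eqn:conv2ov9} I would first establish the entropy-dissipation identity $\tfrac{d}{dt}\KL(\rho_t|\pi) = -\mathrm{Var}_{\rho_t}(\eta_t)$ by differentiating $\KL$ along \eqref{eqn:purebd} (where $\eta_t := \log(\rho_t/\pi)$ satisfies \eqref{eqn:eqnetat}). The time-dependent pointwise bound $\eta_t \geq -sM$ then propagates from the initial one: since $\eta_t = s\eta_0 - \log Z_s$ and $s \mapsto \log Z_s$ is convex (second derivative equals $\mathrm{Var}_{\rho_t}(\eta_0)\geq 0$) with $\log Z_0 = \log Z_1 = 0$, we get $\log Z_s \leq 0$ on $[0,1]$, whence $\eta_t \geq s\eta_0 \geq -sM$.

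The main obstacle is then a Poincar\'e-type functional inequality: for any probability measure $\mu$ with $\log(\mu/\pi) \geq -a$,
\[
\mathrm{Var}_\mu\!\left(\log\tfrac{\mu}{\pi}\right) \;\geq\; \frac{a^2}{9\,e^a(e^a - a - 1)}\,\KL(\mu|\pi).
\]
To attack it I would set $\zeta := \log(\mu/\pi) + a \geq 0$, so that $\int e^\zeta\,d\pi = e^a$, $\mathrm{Var}_\mu(\log(\mu/\pi)) = \mathrm{Var}_\mu(\zeta)$, and $\KL(\mu|\pi) = \mathbb{E}_\mu[\zeta] - a$; then expand $e^\zeta = 1 + \zeta + (e^\zeta - 1 - \zeta)$ and use the sharp pointwise bounds $\zeta^2/2 \leq e^\zeta - 1 - \zeta \leq \zeta^2 e^\zeta/2$ together with a Cauchy-Schwarz argument to couple the moments of $\zeta$ under $\pi$ and $\mu$. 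The factor $9 = 3^2$ in the denominator plausibly arises from partitioning the range of $\zeta$ into three subregions and regrouping via a $3$-term Cauchy-Schwarz; the resulting constant has the asymptotic $2/9$ as $a\to 0$, consistent with the expected limiting relation $\mathrm{Var}\sim 2\KL$ near equilibrium. Applying the inequality to $\mu = \rho_t$ with $a = sM = Me^{-t}$ then converts the entropy dissipation into $\tfrac{d}{dt}\KL(\rho_t|\pi) \leq -\lambda(t)\KL(\rho_t|\pi)$, which Gr\"onwall integrates to \eqref{eqn:conv2ov9}.
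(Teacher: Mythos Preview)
Your argument for \eqref{eqn:expbdrt1} is correct and essentially identical to the paper's: both use the explicit formula from Lemma~\ref{lem:wellposeeps0}, the bound $Z_s \geq e^{-sM}$, and the pointwise inequality $\eta_0\, e^{(s-1)\eta_0} \leq \eta_0$ (the paper phrases this as the case split $\rho_0 \gtrless \pi$, which is the same observation).

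For \eqref{eqn:conv2ov9} your framework also matches the paper: entropy dissipation $\tfrac{d}{dt}\KL = -\mathrm{Var}_{\rho_t}(\eta_t)$, propagation of the lower bound to $\eta_t \geq -Me^{-t}$ (your convexity argument for $\log Z_s \leq 0$ is equivalent to the H\"older inequality $\int \rho_0^{s}\pi^{1-s} \leq 1$ invoked in Lemma~\ref{lem:wellposeeps0}), and Gr\"onwall. The genuine gap is the functional inequality itself, which is the heart of the matter and which you leave as a speculative sketch. Your guess about the origin of the constant $9$ (a three-region partition and $3$-term Cauchy--Schwarz) is off; moreover, your proposed route via $\zeta = \eta + a$ and the bounds on $e^\zeta-1-\zeta$ does not obviously yield a lower bound on the \emph{variance} --- it controls second moments, not centered ones. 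The paper proceeds in two concrete steps. First, the elementary fact that $r \mapsto (\log r - 1 + 1/r)/\log^2 r$ is nonincreasing on $(0,\infty)$ gives, for any $\rho$ with $\rho/\pi \geq e^{-M}$,
\[
\KL(\rho|\pi) \;\leq\; \frac{e^M-M-1}{M^2}\int \rho\,\log^2\frac{\rho}{\pi}.
\]
Second, to replace the raw second moment by $\mathrm{Var}_\rho(\log(\rho/\pi))$ one must bound $a^2$ (with $a=\KL$) by a multiple of the variance. This is done by a two-case argument on whether $\pi(\{\rho/\pi \leq e^{a/2}\}) \geq \tfrac12$, which yields $a^2 \leq 8e^{M}\,\mathrm{Var}_\rho(\log(\rho/\pi))$. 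Combining the two steps and using $1+8e^{M}\leq 9e^{M}$ produces the stated constant. The step you are skipping --- controlling $(\mathbb{E}_\rho[\log(\rho/\pi)])^2$ by the variance --- is exactly where this case analysis does the real work, and it is not recovered by the exponential expansion you outline.
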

\begin{proof}
We first prove \eqref{eqn:expbdrt1}. Recall from the proof of Lemma \ref{lem:wellposeeps0} that 
$$
    \rho_t = \pi \, \left(\frac{\rho_0}{\pi}\right)^{e^{-t}}\Psi_t
$$
for some $\Psi_t >0$. Moreover, 
notice that under the condition \eqref{eqn:ratiolowerbd}, \begin{equation*}
   \frac{1}{\Psi_t} =  \int_{\Omega}\pi\, \left(\frac{\rho_0}{\pi}\right)^{e^{-t}} \ge e^{-Me^{-t}},
\end{equation*} which implies $\Psi_t \le e^{Me^{-t}}$. Therefore \begin{align*}
    \KL(\rho_t|\pi) & = \Psi_t\int_{\Omega} \pi \, \left(\frac{\rho_0}{\pi}\right)^{e^{-t}}(\log \Psi_t + e^{-t} \log \frac{\rho_0}{\pi}) \\ 
 & =\log \Psi_t + \Psi_t e^{-t} \int_{\Omega} \pi\, \left(\frac{\rho_0}{\pi}\right)^{e^{-t}} \log \frac{\rho_0}{\pi} 
 \stepcounter{equation} \tag{\theequation} \label{eqn:medstepPsit} \\ 
 & \le Me^{-t} +  e^{-t+Me^{-t}}\int_{\Omega} \rho_0\log \frac{\rho_0}{\pi} = Me^{-t} +  e^{-t+Me^{-t}}\KL(\rho_0|\pi),
\end{align*}
where in the last inequality we used that if $\rho_0\ge \pi$ then $\left(\frac{\rho_0}{\pi}\right)^{e^{-t} } \le \frac{\rho_0}{\pi}$ and $\log \frac{\rho_0}{\pi}\ge 0$; meanwhile if $\rho_0<\pi$ then $\left(\frac{\rho_0}{\pi}\right)^{e^{-t} } \ge \frac{\rho_0}{\pi}$ and $\log \frac{\rho_0}{\pi}\le 0$.

\smallskip

We now prove \eqref{eqn:conv2ov9}. The proof strategy is a modification of \cite{kondratyev2019spherical}*{Lemmas 2.11 and 2.12} and \cite{kondratyev2020convex}*{Theorem 4.1}. We divide the proof into three steps.

\noindent \emph{Step 1:} Proof of \begin{equation}\label{eqn:entsqr}
    \int_{\Omega} \rho \log \frac{\rho}{\pi}\ud x  \le \frac{e^M-M-1}{M^2}\int_{\Omega} \rho \log^2 \frac{\rho}{\pi}\ud x.
\end{equation}
The key of this step is to prove that for any $r\ge e^{-M}$, \begin{equation}\label{eqn:funcnbd}
    \frac{\log r-1+\frac{1}{r}}{\log^2 r} \le \frac{e^M-M-1}{M^2}.
\end{equation} Let $\varphi(r)=\frac{\log r-1+\frac{1}{r}}{\log^2 r}$, then $\varphi'(r)= \frac{-\log r-r\log r-2+2r}{r^2\log^3 r}$. Now let $\psi(r)=-r\log r-\log r-2+2r$, then $\psi'(r)=1-\log r-\frac{1}{r}\le 0$, so for any $r\ge 1$, $\psi(r) \le \psi(1)= 0$, and therefore $\varphi'(r)\le 0$; on the other hand, when $r\le 1$, $\psi(r)\ge \psi(1)=0$, which again yields $\varphi'(r)\le 0$, and thus when $r\in (e^{-M},\infty)$, the maximum of $\varphi(r)$ is attained at $r=e^{-M}$, which finishes the proof of \eqref{eqn:funcnbd}. Thus taking $r=\frac{\rho}{\pi}$ for any $\rho$ satisfying \eqref{eqn:ratiolowerbd}, we have \begin{equation*}
    \frac{\rho\log \frac{\rho}{\pi}-\rho+\pi}{\rho\log^2 \frac{\rho}{\pi}} \le  \frac{e^M-M-1}{M^2},
\end{equation*}which indicates \eqref{eqn:entsqr} after integration.

\smallskip

\noindent \emph{Step 2:} We strengthen \eqref{eqn:entsqr} into \begin{equation}\label{eqn:funcineq}
    \int_{\Omega} \rho \log \frac{\rho}{\pi}\ud x \le  \frac{9e^M(e^M-M-1)}{M^2}\int_{\Omega} \rho \Big(\log \frac{\rho}{\pi} -\int_{\Omega} \rho\log \frac{\rho}{\pi} \ud x\Big)^2\ud x.
\end{equation}
Let $a=\int \rho \log \frac{\rho}{\pi} \ud x>0$. If $\pb_\pi \left(e^{-M}\le \frac{\rho}{\pi} \le e^\frac{a}{2}\right)\ge \frac{1}{2}$, then, noticing $\log\frac{\rho}{\pi}-a \le -\frac{a}{2}<0$ for $\frac{\rho}{\pi}<e^\frac{a}{2}$, we obtain \begin{align*}
    \int_{\Omega} \rho\left(\log \frac{\rho}{\pi}-a \right)^2 \ud x & \ge e^{-M} \int_{e^{-M}\le \frac{\rho}{\pi}\le e^\frac{a}{2}} \pi \left(\log \frac{\rho}{\pi}-a \right)^2 \ud x \\ & \ge \frac{a^2}{4}e^{-M}  \pb_\pi \left(e^{-M}\le \frac{\rho}{\pi} \le e^\frac{a}{2} \right) \ge \frac{a^2}{8}e^{-M}. \stepcounter{equation} \tag{\theequation} \label{eqn:midregion}
\end{align*}
Otherwise we must have $\pb_\pi \left(\frac{\rho}{\pi}>e^\frac{a}{2} \right)\ge \frac{1}{2}$. Notice for probability densities we have \begin{equation*}
    \int_{\rho>\pi} \rho-\pi \ud x = \int_{\rho<\pi} \pi-\rho\ud x.
\end{equation*} We can estimate the l.h.s. by \begin{equation*}
    \int_{\rho>\pi} \rho-\pi \ud x \ge \int_{\rho> e^\frac{a}{2}\pi} \rho-\pi \ud x \ge (e^\frac{a}{2}-1) \pb_\pi \left(\frac{\rho}{\pi}> e^\frac{a}{2} \right)\ge \frac{1}{2}(e^\frac{a}{2}-1) \ge \frac{a}{4}.
\end{equation*}
On the other hand, \begin{equation*}
     \int_{\rho<\pi} \pi-\rho\ud x \le \int_{\rho<\pi} \pi \log \frac{\pi}{\rho}\ud x,
\end{equation*} which means that 
\begin{equation*}
    \int_{\rho<\pi} \pi \log \frac{\pi}{\rho}\ud x \ge \frac{a}{4}.
\end{equation*}
Hence, using that $\pb_\pi(\rho<\pi)\le \frac{1}{2}$, we obtain
\begin{align*}
    \int_{\Omega} \rho\left(\log \frac{\rho}{\pi}-a \right)^2 \ud x & \ge e^{-M}\int _{\rho<\pi} \pi \left(\log\frac{\pi}{\rho}+a\right)^2 \ud x \\ & \ge e^{-M}\frac{(\int _{\rho<\pi} \pi \log \frac{\pi}{\rho} \ud x)^2}{\pb_\pi(\rho<\pi)} \ge \frac{a^2}{8}e^{-M}. \stepcounter{equation} \tag{\theequation} \label{eqn:largeregion}
\end{align*}
Thus, combining \eqref{eqn:midregion} and \eqref{eqn:largeregion}, in any case we obtain
\begin{equation*}
    \Big( \int_{\Omega} \rho\log\frac{\rho}{\pi} \ud x\Big)^2 \le 8e^M \int_{\Omega} \rho\left( \log \frac{\rho}{\pi}-\int_{\Omega} \rho \log \frac{\rho}{\pi} \ud x \right)^2\ud x.
\end{equation*}
Finally, by further combining  \eqref{eqn:entsqr},  we arrive at
\begin{align*}
    \int_{\Omega} \rho \log \frac{\rho}{\pi}\ud x & \le \frac{e^M-M-1}{M^2}\left(\int_{\Omega} \rho\left( \log \frac{\rho}{\pi}-\int_{\Omega} \rho \log \frac{\rho}{\pi} \ud x \right)^2\ud x+\left(\int_{\Omega} \rho \log \frac{\rho}{\pi} \ud x \right)^2\right) \\ & \le  \frac{(1+8e^M)(e^M-M-1)}{M^2}\int_{\Omega} \rho\left( \log \frac{\rho}{\pi}-\int_{\Omega} \rho \log \frac{\rho}{\pi} \ud x \right)^2\ud x.
\end{align*}

\smallskip

\noindent \emph{Step 3:} Proof of exponential convergence. From the proof of Lemma \ref{lem:wellposeeps0} we have $\Psi_t \ge 1$. By taking infimum on both sides of \eqref{eqn:purebdsoln}, we obtain \begin{equation}\label{eqn:rhotlowerbd}
    \inf_{x\in \Omega} \frac{\rho_t(x)}{\pi(x)}=\Psi_t \inf_{x\in \Omega} \left(\frac{\rho_0(x)}{\pi(x)}\right)^{e^{-t}} \,  \leftstackrel{\eqref{eqn:ratiolowerbd}}{\ge} e^{-Me^{-t}},
\end{equation} In other words, $\rho_t$ satisfies \eqref{eqn:ratiolowerbd} with $Me^{-t}$ playing the role of $M$. Therefore, a combination of direct time differentiation and \eqref{eqn:funcineq} yields \begin{equation*}
    \dfrac{\ud}{\ud t}\KL(\rho_t|\pi)= - \int_{\Omega}\rho_t\left( \log \frac{\rho_t}{\pi}-\int_{\Omega} \rho_t \log \frac{\rho_t}{\pi} \ud x \right)^2\ud x \le -\lambda(t) \KL(\rho_t|\pi).
\end{equation*}
The convergence result \eqref{eqn:conv2ov9} therefore directly follows from a Gronwall inequality. 
\end{proof}

\begin{remark}\label{rmk:improvbdconv}
   The condition \eqref{eqn:ratiolowerbd} can be relaxed or modified if we add conditions on $\pi$. One such modification, inspired by \cite[Proposition 3.23]{chen2023gradient}, is that, suppose for some $p\in [1,\infty)$ we have $M_p(\pi):= \int_{\Omega} |x|^p \ud \pi(x)<\infty$, and in $\Omega$, we have \begin{equation}\label{eqn:newbdassump}
       \frac{\rho_0(x)}{\pi(x)} \ge e^{-M(1+|x|^p)},
   \end{equation} then along \eqref{eqn:purebd}, we have convergence \begin{equation*}
       \KL(\rho_t|\pi) \le Me^{-t}(1+M_p(\pi)) + \exp\big(-t+Me^{-t}(1+M_p(\pi))\big)\KL(\rho_0|\pi).
   \end{equation*}
  The proof follows closely along that of \eqref{eqn:expbdrt1}, with the difference being 
  \begin{align*}
      \frac{1}{\Psi_t} = \int_{\Omega} \left(\frac{\rho_0}{\pi}\right)^{e^{-t}}  \ud \pi &  \ge \int_{\Omega} \left( e^{-M(1+|x|^p)} \right)^{e^{-t}} \ud \pi = \int_{\Omega} e^{-Me^{-t}(1+|x|^p)} \ud \pi \\ & \ge \exp\Big(-\int Me^{-t}(1+|x|^p) \ud \pi   \Big) = \exp\Big(-Me^{-t}(1+M_p(\pi)) \Big),
  \end{align*}
  and we finish the proof after substituting this into \eqref{eqn:medstepPsit}. This new assumption \eqref{eqn:newbdassump} covers almost all reasonable scenarios with $\rho_0$ being Gaussian, as long as $\pi$ has second moment. The upper bound in the assumption of \cite{chen2023gradient} is unnecessary. As is suggested in \cite{lu2019accelerating}, the optimal asymptotic convergence rate should be $e^{-2t}$, which is proved in \cite{domingo2023explicit} under a different set of assumptions.
\end{remark}

\begin{remark}
Combining Langevin dynamics with the birth-death dynamics would result in dynamics with
convergence rate that is at least the maximum of the log-Sobolev constant of $\pi$ and the rates obtained in Theorem \ref{thm:expconvbdkl}. That is, suppose $\pi$ satisfies a logarithmic Sobolev inequality with constant $C_{LSI}$, then for the dynamics \begin{equation*}
    \partial_t \rho_t =  -\rho_t \log \frac{\rho_t}{\pi}+ \rho_t \int_{\Omega} \rho_t \log \frac{\rho_t}{\pi}\ud x + \nabla \cdot \left(\rho_t \nabla \log \frac{\rho_t}{\pi}\right),
\end{equation*} as long as $\rho_0$ satisfies \eqref{eqn:ratiolowerbd}, we have convergence \begin{equation*}
    \KL(\rho_t|\pi) \le \min\left\{\exp\left(-\int_0^t \tilde{\lambda}(s)\ud s\right) \KL(\rho_0|\pi), Me^{-t} + e^{-t+Me^{-t}} \KL(\rho_0|\pi) \right\},
\end{equation*} with \[\tilde{\lambda}(t) = \max\Big\{C_{LSI}, \frac{M^2 e^{-2t}}{9e^{Me^{-t}}(e^{Me^{-t}}-Me^{-t}-1)}\Big\}.\] Convergence rate of $C_{LSI}$ is guaranteed even without the condition \eqref{eqn:ratiolowerbd}.
\end{remark}

At the end of this section, we would like to use two examples to illustrate that the pointwise lower bound condition \eqref{eqn:ratiolowerbd} is not numerically restrictive. In particular, if $V$ has at least quadratic growth at infinity, one can choose $\rho_0$ to be any sufficiently wide round Gaussian to satisfy \eqref{eqn:ratiolowerbd}.

\begin{example} Suppose $V(x)$ is strongly convex and $\frac{m}{2}|x|^2\le V(x)\le \frac{L}{2}|x|^2$. Then we can pick $\rho_0(x)=(\frac{m}{2\pi})^\frac{d}{2}\exp(-\frac{m}{2}|x|^2)$, and therefore \[\inf_{x\in \R^d} \frac{\rho_0}{\pi}=\inf_{x\in \R^d}Z\left(\frac{m}{2\pi}\right)^\frac{d}{2} \exp\left(V(x)-\frac{m}{2}|x|^2\right)\ge Z\left(\frac{m}{2\pi}\right)^\frac{d}{2} \ge \left(\frac{m}{L}\right)^\frac{d}{2},\] which means $\rho_0$ satisfies \eqref{eqn:ratiolowerbd} with $M=\frac{d}{2}\log \frac{L}{m}$. Moreover, after a time of $t\ge \log M=\log (d\log \frac{L}{m})$, the convergence rate becomes $O(1)$.
\end{example}
\begin{example}
Let us consider the double well potential $V(x) = \frac{1}{\epsilon} (1-|x|^2)^2$. We pick $\rho_0 = \left(\frac{1}{2\pi \varepsilon} \right)^\frac{d}{2}e^{-\frac{|x|^2}{2\epsilon}}$, then \[\frac{\rho_0}{\pi}=\frac{Z}{Z_0}\exp\left(\frac{1}{2\varepsilon}(|x|^2-2)^2-\frac{3}{\varepsilon} \right) \ge \left(\frac{1}{2\pi \varepsilon}\right)^\frac{d}{2} Z\exp\left(-\frac{3}{\varepsilon}\right).\] 
Notice that 
\begin{align*}
    Z  = \sigma(\mathbb{S}^{d-1})\int_0^\infty r^{d-1}\exp\Big(-\frac{1}{\varepsilon}(1-r^2)^2\Big) \ud r  \gtrsim \frac{\pi^\frac{d}{2}}{d\sqrt{\pi d}(\frac{d}{2e})^\frac{d}{2}}\exp \left(-\frac{1}{\varepsilon}\right),
\end{align*} which means $\rho_0$ satisfies \eqref{eqn:ratiolowerbd} with $M= \frac{d}{2}\log \frac{d}{2\varepsilon}+\frac{4}{\varepsilon}$, and therefore the burn-in time needed is $O(\log M)=O(\log d+ \log \frac{1}{\varepsilon})$.
\end{example}

\section{Pure birth-death dynamics governed by  chi-squared divergence}
\label{sec:chi}

In this section we consider the spherical Hellinger gradient flow with $\chi^2(\rho|\pi):= \int_{\Omega} \left(\frac{\rho}{\pi}-1\right)^2 \ud \pi$ as the energy functional:
\begin{equation*}
    \partial_t \rho_t =-\rho_t\left( \frac{\rho_t}{\pi}- \int_{\Omega} \frac{\rho_t}{\pi}\ud \rho_t\right).
\end{equation*} This is the dynamics appeared in \cite{lindsey2021ensemble}*{(3.6)}. 
There the authors first derive a related family of dynamics, \cite[(3.1)]{lindsey2021ensemble}, as the continuum limit of the ensemble Monte-Carlo sampling schemes they introduced. For the dynamics \cite[(3.1)]{lindsey2021ensemble}, with kernel $\mathcal{Q}=\textrm{Id}$, they prove exponential decay of the ``reverse'' $\chi^2$-divergence. In the time scaling we consider, this can be stated as  $\chi^2(\pi | \rho_{Z_t}) \lesssim e^{-t}$, where $Z_t$ is the rescaling in time for which  $\frac{\ud Z_t}{\ud t} = \chi^2(\rho_{Z_t} | \pi)+1$. Since $\chi^2(\rho_{Z_t} | \pi) \to 0$ as $t \to \infty$, $\frac{\ud Z_t}{\ud t} \to 1$ as $t \to \infty$. Thus  the exponential rates of \cite[Theorem 1]{lindsey2021ensemble} implies asymptotic exponential rates of order $e^{-t}$ for $\chi^2(\pi | \rho_t)$. However, due to the non-symmetry of the $\chi^2$-divergence, the convergence result on $\chi^2(\pi | \rho_t)$ from \cite{lindsey2021ensemble} does not directly imply a convergence result for $\chi^2(\rho_t 
| \pi)$, although the later is a more natural Lyapunov function for the gradient flow  \eqref{eqn:gfchi2} since it is the underlying energy. In the next theorem, we show that $\chi^2(\rho_t | \pi)$ also contracts exponentially fast and provide a quantitative estimate for the convergence rate.  The authors of \cite{lindsey2021ensemble} also note that the dynamics we consider, \eqref{eqn:gfchi2}, is formally spherical Hellinger gradient flow.

\smallskip

\begin{theorem}\label{thm:expconvbdchi2}
Let $\rho_0,\pi$ satisfy Assumption \ref{ass:genpi}, and let $\rho_t \in C^1\big([0,\infty), L^1(\Omega) \cap \mathcal{P}(\Omega)\big)$ be the solution of \eqref{eqn:gfchi2} with initial condition $\rho_0$.
Then, for any initial probability density $\rho_0(x)$ such that \begin{equation}\label{eqn:lowerbdchi2}
     \inf_{x\in \Omega} \frac{\rho_0(x)}{\pi(x)} \ge e^{-M}
\end{equation} 
for some $M>1$, we have exponential convergence to equilibrium along the dynamics \eqref{eqn:gfchi2}
\begin{equation*}
    \chi^2(\rho_t|\pi) \le \exp\left(-\int_0^t \lambda(s)\ud s\right) \chi^2(\rho_0|\pi),
\end{equation*} with \begin{equation}\label{eqn:chisqrate}
    \lambda(t)=\frac{2}{\big(9+8(e^M-1)e^{-t}\big)\big(1+(e^M-1)e^{-t}\big)}.\end{equation}
\end{theorem}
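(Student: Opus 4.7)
My plan is to follow the three-step blueprint used in the proof of Theorem \ref{thm:expconvbdkl}, with the relative entropy replaced by the $\chi^2$-divergence throughout. Writing $r_t := \rho_t/\pi$ and $E(t) := \int_\Omega r_t^2\ud\pi = 1+\chi^2(\rho_t|\pi)$, since \eqref{eqn:gfchi2} is (up to a factor $\tfrac{1}{2}$) the spherical Hellinger gradient flow of $\chi^2(\cdot|\pi)$, differentiating the energy yields the dissipation identity
\[
\frac{\ud}{\ud t}\chi^2(\rho_t|\pi) = -2\int_\Omega \rho_t\big(r_t-E(t)\big)^2 \ud x.
\]
The goal is then to produce a functional inequality of the form $\chi^2(\rho_t|\pi) \leq (1+c(t))(9+8c(t))\int_\Omega\rho_t(r_t-E(t))^2 \ud x$ with $c(t) := (e^M-1)e^{-t}$, and to conclude by Gronwall's lemma.

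The first ingredient is a pointwise lower bound $r_t \geq e^{-M(t)}$ analogous to \eqref{eqn:rhotlowerbd}. I would exploit that $u_t := 1/r_t$ satisfies the affine-linear equation $\partial_t u_t = 1 - E(t) u_t$, with $E(t) \geq 1$ by Jensen's inequality. Either a maximum-principle argument, or an explicit Duhamel representation combined with the elementary bound $\int_0^t e^{\int_0^s E(\tau)\ud\tau}\ud s \leq e^{\int_0^t E(\tau)\ud\tau}-1$ (which follows from $\frac{\ud}{\ud s}e^{\int_0^s E} \geq e^{\int_0^s E}$), yields $u_t(x)-1 \leq (e^M-1)e^{-t}$ uniformly in $x$. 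Equivalently, $r_t \geq e^{-M(t)}$ with $e^{M(t)} := 1+c(t)$, and in particular $1\leq r_t\, e^{M(t)}$ pointwise.

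With this bound in hand I would replay the two steps from the proof of Theorem \ref{thm:expconvbdkl}. Step 1 is immediate from the pointwise estimate:
\[
\chi^2(\rho_t|\pi) = \int_\Omega (r_t-1)^2\ud\pi \leq e^{M(t)}\int_\Omega \rho_t(r_t-1)^2\ud x.
\]
Step 2 is a dichotomy, with the threshold $e^{a/2}$ from the KL case replaced by $1+a/2$, where $a := \chi^2(\rho_t|\pi)$. If $\pb_\pi(r_t\leq 1+a/2)\geq 1/2$, then $E(t)-r_t \geq a/2$ on that set and a direct integration gives $\int_\Omega (r_t-E)^2\ud\pi \geq a^2/8$. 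Otherwise $\pb_\pi(r_t>1+a/2)\geq 1/2$, and the mass identity $\int_{r<1}(1-r)\ud\pi = \int_{r\geq 1}(r-1)\ud\pi \geq a/4$, combined with Cauchy--Schwarz on $\{r<1\}$ (whose $\pi$-measure is at most $1/2$) and the comparison $(E-r)^2 \geq (1-r)^2$ on $\{r\leq 1\}$, again yields $\int_\Omega(r_t-E)^2\ud\pi \geq a^2/8$. Multiplying by $r_t \geq e^{-M(t)}$ gives $\chi^4 \leq 8e^{M(t)}\int_\Omega \rho_t(r_t-E)^2\ud x$ in both cases.

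Combining Step 1 with Step 2 and the algebraic identity $\int_\Omega \rho_t(r_t-1)^2\ud x = \int_\Omega \rho_t(r_t-E)^2\ud x + \chi^4$ (which uses $\int_\Omega \rho_t(r_t-E)\ud x = 0$) yields $\chi^2 \leq e^{M(t)}(1+8e^{M(t)}) \int_\Omega\rho_t(r_t-E)^2\ud x = (1+c)(9+8c)\int_\Omega\rho_t(r_t-E)^2\ud x$, and the dissipation identity plus Gronwall's lemma deliver the claimed decay rate \eqref{eqn:chisqrate}. The main technical point I expect is Case~(ii) of the dichotomy, where mass conservation, Cauchy--Schwarz on $\{r<1\}$, and the pointwise comparison $(E-r)^2 \geq (1-r)^2$ for $r\leq 1$ must be combined carefully to extract the required $\chi^4$-lower bound; the remaining steps are direct analogs of those in Theorem \ref{thm:expconvbdkl}.
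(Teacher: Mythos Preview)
Your proposal is correct and follows essentially the same argument as the paper. The only cosmetic differences are that you derive the lower bound on $r_t$ via the linear equation for $u_t=1/r_t$ and Duhamel, while the paper uses the differential inequality $\partial_t r_t\ge -r_t^2+r_t$ directly (both yield $e^{M(t)}\le 1+(e^M-1)e^{-t}$), and that your dichotomy parameter $a=\chi^2$ corresponds to the paper's $a=E=1+\chi^2$, so the threshold $1+a/2$ in your write-up is exactly the paper's $(a+1)/2$.
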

\begin{proof}
The proof is similar to that of \eqref{eqn:conv2ov9} in Theorem \ref{thm:expconvbdkl}. The core step is the following functional inequality which holds for any $\rho$ satisfying $\inf \frac{\rho}{\pi}\ge e^{-M}$, \begin{equation}\label{eqn:funcineqchi2}
    \int_{\Omega} \frac{\rho^2}{\pi}\ud x -1 \le e^M(1+8e^M)\int_{\Omega} \rho \left(\frac{\rho}{\pi}-\int_{\Omega} \frac{\rho^2}{\pi} \ud x \right)^2 \ud x.
\end{equation}
Let $a=\int_{\Omega} \frac{\rho^2}{\pi} \ud x >1$. If $\pb_\pi \left(e^{-M}\le \frac{\rho}{\pi} \le \frac{a+1}{2}\right) \ge \frac{1}{2}$, then 
\begin{align*}
    \int_{\Omega} \rho \left(\frac{\rho}{\pi}-a \right)^2 \ud x \ge e^{-M}  \int_{e^{-M}\le \frac{\rho}{\pi} \le \frac{a+1}{2}} \pi \left(\frac{\rho}{\pi}-a \right)^2 \ud x & \ge \frac{(a-1)^2}{4e^{M}}
    \pb_\pi \left(e^{-M}\le \frac{\rho}{\pi} \le \frac{a+1}{2} \right)\\ 
    &  \ge \frac{(a-1)^2}{8e^{M}}.
\end{align*}
Otherwise $\pb_\pi \left(\frac{\rho}{\pi}\ge \frac{a+1}{2} \right)\ge \frac{1}{2}$, which means \begin{equation*}
    \int_{\rho<\pi} \pi-\rho\ud x = \int_{\rho\ge \pi} \rho-\pi \ud x \ge \int_{\frac{\rho}{\pi}\ge \frac{a+1}{2}} (\rho-\pi)\ud x \ge \frac{a-1}{2} \pb_\pi \left(\frac{\rho}{\pi}\ge \frac{a+1}{2} \right) \ge \frac{a-1}{4}.
\end{equation*}
Therefore \begin{equation*}
     \int_{\Omega} \rho \left(\frac{\rho}{\pi}-a \right)^2 \ud x \ge e^{-M}  \int_{\rho<\pi} \pi\left(\frac{\rho}{\pi}-a \right)^2 \ud x \ge \frac{(\int_{\rho<\pi} \pi(1-\frac{\rho}{\pi})\ud x)^2}{e^{M}\pb_\pi (\rho<\pi)} \ge \frac{(a-1)^2}{8e^M}.
\end{equation*}
To conclude, \begin{align*}
    \int_{\Omega} \pi \left(\frac{\rho}{\pi}-1 \right)^2 \ud x \le e^M \int_{\Omega} \rho \left(\frac{\rho}{\pi}-1 \right)^2 \ud x & = e^M\left(\int_{\Omega} \rho \left(\frac{\rho}{\pi}-a \right)^2 \ud x + (a-1)^2 \right) \\ & \le e^M(1+8e^M) \int_{\Omega} \rho \left(\frac{\rho}{\pi}-a \right)^2 \ud x.
\end{align*}
This finishes the proof of \eqref{eqn:funcineqchi2}. Now let us return to the dynamics \eqref{eqn:gfchi2}. Taking time derivative, we have for $e^{-M(t)}=\inf\frac{\rho_t}{\pi}$,
\begin{align*}
    \dfrac{\ud}{\ud t}\int_{\Omega} \pi \left(\frac{\rho_t}{\pi}-1 \right)^2 \ud x  & = - 2\int_{\Omega} \rho_t \left(\frac{\rho_t}{\pi}-\int_{\Omega} \frac{\rho_t^2}{\pi} \ud x \right)^2 \ud x \\
    & \leftstackrel{\eqref{eqn:funcineqchi2}}{\le} -\frac{2}{e^{M(t)}(1+8e^{M(t)})}\int_{\Omega} \pi\left(\frac{\rho_t}{\pi}-1 \right)^2 \ud x.
\end{align*}
 By Gronwall inequality, this means the dynamics converge exponentially with instantaneous rate $\lambda(t) = \frac{2}{e^{M(t)}(1+8e^{M(t)})}$. Finally, notice that \begin{equation*}
    \dfrac{\ud}{\ud t} \frac{\rho_t}{\pi} = - \frac{\rho_t^2}{\pi^2} + \frac{\rho_t}{\pi} \int_{\Omega} \frac{\rho_t^2}{\pi} \ud x \ge - \frac{\rho_t^2}{\pi^2} + \frac{\rho_t}{\pi}.
\end{equation*}
Therefore, solving the ODE,  one has \begin{equation*}
   e^{-M(t)} \ge \frac{e^t}{e^M+e^t-1},
\end{equation*}which gives the convergence rate in \eqref{eqn:chisqrate}.
\end{proof}
Notice that one has $\lim_{t\to\infty}\lambda(t) = \frac{2}{9}$, which we believe is suboptimal based on the results in Theorem \ref{thm:expconvbdkl} (i) as well as the observation made in \cite{lindsey2021ensemble}. On the other hand, if $M\gg 1$, then the instantaneous convergence rate is $O(1)$ only when $e^{M-t}= O(1)$, which means $t\ge O(M)$. Hence the waiting time of \eqref{eqn:gfchi2} is longer than that of \eqref{eqn:purebd}, which is $O(\log M)$. 

\section{Kernelized dynamics and its particle approximations}

In this section we investigate a particle-based approximation to the dynamics \eqref{eqn:purebd}. We first introduce a nonlocal approximation of \eqref{eqn:purebd} that is based on regularizing the relative entropy:
 \begin{equation}\label{eqn:gfkerbd}\tag{$\text{BD}_\varepsilon$}
 \!   \partial_t \rhove = -\rhove\left[\log \left(\frac{\Kve*\rhove}{\pi}\right) + \Kve * \left( \frac{ \rhove}{\Kve* \rhove} \right) - \! \int  \! \log \left(\frac{\Kve*\rhove}{\pi}\right) d \rhove -1 \right].
\end{equation}
It is the spherical Hellinger gradient flow of the regularized entropy 
\begin{equation}\label{eqn:funckerkl}
    \fve(\rho) = \int  \log (\Kve * \rho) d \rho - \int \log \pi d \rho = \int \log(\Kve * \rho) d \rho  + \int V d \rho + C, 
\end{equation}
where $C = \log \left( \int \exp(-V(x))dx \right)$.
We first study the energy  $\fve$ on the whole space. In Sections 
\ref{sec:wpBDeps} and \ref{sec:gamma} we study the well posedness and the convergence as $\varepsilon \to 0$ of the gradient flow on the torus. 

We now state the conditions on the kernel $\Kve$ that we require for our results. 
\begin{assumption}\label{assump:kernel}
The  kernel $\Kve(x-y)$ is of the form $\Kve(x-y)= \varepsilon^{-d}K(\frac{x-y}{\varepsilon})$, 
where $K \in C^\infty(\R^d) \cap L^\infty(\R^d)$ satisfies the following:
 \begin{listi}
     \item $\int_{\R^d} K \ud x=1$ and $K$ is positive definite, in the sense that for any function $f\in C_c^\infty(\R^d)$, 
     \[\int K(x-y) f(x) f(y) \ud x \ud y \ge 0.\]
     \item $K$ is radially symmetric, i.e. $K(x) = \calK(|x|)$, and $M_4(K):=\int_{\R^d} |x|^4 K (x) \ud x <\infty$.
 \end{listi}
\end{assumption}
\noindent Assumption \ref{assump:kernel} also indicates that there exists a kernel $\xi\ge 0$ such that $K = \xi*\xi$ and $\int_{\R^d} \xi=1$, namely $\hat \xi = \sqrt{\hat K}$.  One example that satisfies Assumption \ref{assump:kernel} is the Gaussian kernel $K(x) = (2\pi)^{-\frac{d}{2}}e^{-\frac{|x|^2}{2}}$, in which case $\xi = K_{1/\sqrt{2}}$. We also use $\xi_\varepsilon$ to denote $\varepsilon^{-d} \xi(\frac{\cdot}{\varepsilon})$.

\subsection{Quantitative distance between minimizers} \label{sec:minimizer-approx}
The $\Gamma$-convergence of $\fve$, defined in \eqref{eqn:funckerkl}, to relative entropy $\KL(\rho|\pi)$ and the convergence of minimizers are already proved in \cite{carrillo2019blob}, which we restate in the following Theorem \ref{thm:blobengyconv}. \begin{theorem}[\cite{carrillo2019blob}] \label{thm:blobengyconv}
Suppose $\pi = \exp(-V) \in \mathcal{P}_2(\R^d)$. Let $\Kve$ satisfy Assumption \ref{assump:kernel}. 
\begin{listi}
    \item (\cite{carrillo2019blob}*{Theorem 4.1}) As $\varepsilon\to 0$, $\fve$ defined in \eqref{eqn:funckerkl} $\Gamma$-converges to $\calF(\rho):= \KL(\rho|\pi)$, in the sense that for any sequence $\rhove \wkstc \rho$, we have $\liminf_{\varepsilon\to 0} \fve (\rhove) \ge \calF(\rho)$. Moreover, $\limsup_{\varepsilon\to 0} \fve(\rho) \le \calF(\rho)$.
    \item (\cite{carrillo2019blob}*{Theorem 4.5}) Suppose in addition that $K$ is Gaussian, and that there exists a constant $C$ such that $V(x) \ge C(|x|^2-1)$, then minimizers of $\fve$ over $\mathcal{P}_2(\R^d)$ exist. Moreover, for any sequence $(\pi_\varepsilon)_\varepsilon$ such that $\pi_\varepsilon \in \mathcal{P}_2(\R^d)$ is a minimizer of $\fve$, we have, up to a subsequence, $\pi_\varepsilon \wkstc \pi$.
\end{listi}
\end{theorem}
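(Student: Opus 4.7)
I would follow the standard $\Gamma$-convergence recipe: prove the liminf and limsup inequalities for part (i), and then use (i) together with the direct method for part (ii), where the limit minimizer must coincide with $\pi$ by uniqueness of the relative-entropy minimizer.

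\textbf{Liminf via Jensen and factorization.} The key observation is the factorization $K=\xi*\xi$ noted immediately after Assumption \ref{assump:kernel}. Since $\xi_\varepsilon\ge 0$ integrates to one, Jensen's inequality applied to the concave logarithm yields the pointwise bound
\begin{equation*}
\log(\Kve*\rho)(x)=\log\!\int \xi_\varepsilon(x-z)\,(\xi_\varepsilon*\rho)(z)\,dz \;\ge\;\bigl(\xi_\varepsilon*\log(\xi_\varepsilon*\rho)\bigr)(x).
\end{equation*}
Integrating against $\rho$ and using symmetry of $\xi_\varepsilon$ gives $\int \log(\Kve*\rho)\,d\rho \ge \int (\xi_\varepsilon*\rho)\log(\xi_\varepsilon*\rho)\,dx$, so the nonlocal logarithm term is controlled from below by the standard Boltzmann entropy of the mollified density. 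Given $\rhove\wkstc\rho$, the mollified sequence $\xi_\varepsilon*\rhove$ converges to $\rho$ in $L^1_{\mathrm{loc}}$, and weak lower semicontinuity of $f\mapsto \int f\log f$ together with Fatou applied to $\int V\,d\rhove$ (using that $V$ is continuous and bounded below) produces $\liminf_{\varepsilon\to 0}\fve(\rhove)\ge \calF(\rho)$.

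\textbf{Limsup and minimizers.} For the limsup I would use the constant recovery sequence $\rhove\equiv\rho$: mollification gives $\Kve*\rho\to\rho$ pointwise a.e., and for $\rho$ bounded with bounded support dominated convergence yields $\fve(\rho)\to\calF(\rho)$; the general case follows by approximating by such nice $\rho$ via truncation and a diagonal argument using the finiteness of $\calF(\rho)$. For part (ii), existence of a minimizer $\pi_\varepsilon$ at fixed $\varepsilon$ follows from the direct method: the Jensen lower bound above combined with the Gaussian lower bound on Boltzmann entropy (in terms of second moment) and the coercive growth $V(x)\ge C(|x|^2-1)$ give coercivity $\fve(\rho)\to+\infty$ as the second moment of $\rho$ diverges, and weak-* lower semicontinuity of $\fve$ closes the argument. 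For the convergence $\pi_\varepsilon\wkstc\pi$, the limsup at $\pi$ gives $\fve(\pi_\varepsilon)\le\fve(\pi)\to\calF(\pi)=0$, bounding $\int V\,d\pi_\varepsilon$ uniformly and producing tightness in $\mathcal{P}_2(\R^d)$. Any cluster point $\pi_\ast$ then satisfies $\calF(\pi_\ast)\le\liminf\fve(\pi_\varepsilon)\le 0$ by the liminf in (i), so $\pi_\ast=\pi$ by Gibbs' inequality.

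\textbf{Expected obstacle.} The main difficulty is obtaining a useful lower bound on the nonlocal logarithmic term: the kernel has peak mass of order $\varepsilon^{-d}$, so without the Jensen trick one cannot control $\int\rho\log(\Kve*\rho)$ from below uniformly in $\varepsilon$. The Jensen--factorization step is precisely what converts this into a standard Boltzmann-entropy estimate, after which the rest of the argument becomes routine. A secondary subtlety is justifying $\liminf\int V\,d\rhove\ge\int V\,d\rho$ for unbounded $V$; this will require using the superlinear growth of $V$ to upgrade weak-* convergence into equi-integrability of $V$ with respect to $\rhove$, so that mass cannot leak to infinity along the minimizing sequence.
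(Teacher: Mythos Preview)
The paper does not supply a proof of this theorem; it is quoted verbatim from \cite{carrillo2019blob} (Theorems 4.1 and 4.5 there) and stated without argument. So there is no ``paper's own proof'' to compare against in the strict sense.

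That said, your proposal is on the right track and in fact recovers the central device that both \cite{carrillo2019blob} and the present paper rely on. The Jensen/factorization step you isolate,
\[
\log(\Kve*\rho)=\log(\xi_\varepsilon*\xi_\varepsilon*\rho)\ge \xi_\varepsilon*\log(\xi_\varepsilon*\rho),
\]
is exactly the inequality the authors record later as \eqref{eqn:jensen} and use in the proof of Theorem~\ref{thm:quanminconv}; it is also the core of the liminf argument in \cite{carrillo2019blob}. Your treatment of part (ii)---using the limsup at $\pi$ to bound $\fve(\pi_\varepsilon)$, then coercivity from $V(x)\ge C(|x|^2-1)$ plus the entropy/moment lower bound to get tightness, then the liminf to force $\calF(\pi_\ast)\le 0$---is precisely the standard $\Gamma$-convergence script that \cite{carrillo2019blob} follows. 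The only place you should be a bit more careful is the limsup for general $\rho$ with $\calF(\rho)<\infty$: the diagonal/truncation argument works, but one needs lower semicontinuity of $\fve$ along the approximating sequence (or a direct dominated-convergence argument using $\rho\log\rho\in L^1$ and $V\rho\in L^1$), which \cite{carrillo2019blob} handles explicitly.
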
 However, \cite{carrillo2019blob} did not prove quantitatively how close the minimizers $\pi_\varepsilon$ are to $\pi$. In the case $V$ is sufficiently regular and has between quadratic and quartic growth, we can prove a more quantitative result. We would like to comment here that, while our assumption on $V$ is slightly stronger than that of Theorem \ref{thm:blobengyconv}, we do not require $K$ to be Gaussian in our following theorem.

\begin{theorem}\label{thm:quanminconv}
Suppose $\Kve$ satisfies Assumption \ref{assump:kernel} with some $\xi \in \mathcal{P}_4(\R^d)$. Suppose $\pi$ satisfies a Talagrand inequality \cite{otto2000generalization} with some constant $m>0$: \begin{equation} \label{eqn:talagrand} W_2(\rho,\pi) \le \sqrt{\frac{2}{m}\KL(\rho|\pi)}\end{equation} for any probability measure $\rho$. Suppose also that  \begin{equation} \label{eqn:thm5condV} \|D^2 V(x)\|\le L(1+|x|^2)  \, \textrm{ and } \|D^4 V(x)\|\le L\end{equation} for some $L>1$. Then, for any $0<\varepsilon < \sqrt{\frac{m}{4LM_2(\xi)}}$, let $\pi_\varepsilon$ be a minimizer of $\fve$ defined in \eqref{eqn:funckerkl}, we have for some constant $C=C(\pi,\xi)$ independent of $\varepsilon$,
\begin{equation}\label{eqn:w2piepspi}
  W_2(\pi_\varepsilon,\pi) \le C \varepsilon, 
\end{equation}and \begin{equation}\label{eqn:minvalfve}
    0\ge \fve(\pi_\varepsilon) \ge -C\varepsilon^2.  
\end{equation}\end{theorem}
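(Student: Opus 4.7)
The strategy is to first establish a two-sided bound $-C\varepsilon^2 \le \fve(\pi_\varepsilon) \le 0$, and then to deduce the Wasserstein estimate by comparing $\pi_\varepsilon$ to a smoothed version of itself, with the smallness condition on $\varepsilon$ entering at the final closure step.

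\smallskip

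First I would derive the key lower bound
\begin{equation*}
\fve(\rho) \ge \KL(\xi_\varepsilon * \rho \,|\, \pi) + \int (V - \xi_\varepsilon * V)\, \ud \rho,
\end{equation*}
valid for any probability measure $\rho$, using the factorization $K = \xi * \xi$ and Jensen's inequality for the concave function $\log$. Setting $\tilde\rho := \xi_\varepsilon * \rho$, the pointwise bound $\log(\xi_\varepsilon * \tilde\rho) \ge \xi_\varepsilon * \log \tilde\rho$, together with the radial symmetry of $\xi$ (which gives $\int (\xi_\varepsilon * f)\, \ud \rho = \int f\, \ud \tilde\rho$), yields $\int \log(\Kve * \rho)\, \ud \rho \ge \int \tilde\rho \log \tilde\rho \, \ud x$, and rearranging using $\pi = Z^{-1} e^{-V}$ gives the claim. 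For the complementary upper bound, the elementary inequality $\log t \le t - 1$ gives $\fve(\pi) = \int \log(\Kve * \pi / \pi)\, \ud \pi \le \int (\Kve * \pi) \, \ud x - 1 = 0$, so by minimality $\fve(\pi_\varepsilon) \le \fve(\pi) \le 0$, which already proves the upper bound in \eqref{eqn:minvalfve}.

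\smallskip

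Next I would Taylor-expand $V$ to fourth order to turn the lower bound into a quantitative estimate. By the radial symmetry of $\xi$, the odd-order terms in $V(y-\varepsilon z)$ integrate to zero against $\xi(z)\,\ud z$; the second-order term contributes $\frac{\varepsilon^2 M_2(\xi)}{2d}\Delta V(y)$, while the fourth-order remainder is uniformly bounded thanks to $\|D^4 V\|_\infty \le L$ and $M_4(\xi)<\infty$. Using $|\Delta V(y)| \le d L(1+|y|^2)$ from \eqref{eqn:thm5condV}, I obtain
\begin{equation*}
\left| \int (V - \xi_\varepsilon * V)\, \ud \rho \right| \le \tfrac{1}{2}\varepsilon^2 L M_2(\xi)(1 + M_2(\rho)) + C L M_4(\xi)\,\varepsilon^4.
\end{equation*}
Combining with $\fve(\pi_\varepsilon) \le 0$ gives $\KL(\tilde\pi_\varepsilon \,|\, \pi) \le C\varepsilon^2(1 + M_2(\pi_\varepsilon)) + C\varepsilon^4$; Talagrand's inequality \eqref{eqn:talagrand}, together with the coupling bound $W_2(\pi_\varepsilon, \tilde\pi_\varepsilon) \le \varepsilon \sqrt{M_2(\xi)}$ (from $X \mapsto X + \varepsilon Z$, $Z \sim \xi$) and the triangle inequality, then yields an inequality for $W_2(\pi_\varepsilon, \pi)$ that still involves $M_2(\pi_\varepsilon)$.

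\smallskip

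The hard part will be closing this loop, since $M_2(\pi_\varepsilon)$ is not a priori bounded. Substituting the elementary bound $M_2(\pi_\varepsilon) \le 2 M_2(\pi) + 2 W_2(\pi_\varepsilon, \pi)^2$ (note that $M_2(\pi)<\infty$ follows from the Talagrand inequality) produces a self-referential inequality of the form $W_2(\pi_\varepsilon,\pi)^2 (1 - C\varepsilon^2) \le C\varepsilon^2(1+M_2(\pi))$, which is useful exactly when $C\varepsilon^2 < 1$; this is the source of the smallness condition $\varepsilon < \sqrt{m/(4L M_2(\xi))}$. Absorbing the self-referential term gives $W_2(\pi_\varepsilon,\pi) \le C\varepsilon$, and as a byproduct the uniform moment bound $M_2(\pi_\varepsilon) \le C(\pi,\xi)$. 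Plugging this uniform bound back into the key lower bound finally yields $\fve(\pi_\varepsilon) \ge -C\varepsilon^2$, completing the proof of \eqref{eqn:w2piepspi} and \eqref{eqn:minvalfve}.
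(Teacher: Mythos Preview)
Your proposal is correct and follows essentially the same approach as the paper's proof: the Jensen-based lower bound $\fve(\rho)\ge \KL(\xi_\varepsilon*\rho\,|\,\pi)+\int(V-\xi_\varepsilon*V)\,\ud\rho$, the fourth-order Taylor expansion of $V$ using the symmetry of $\xi$, the Talagrand inequality combined with the triangle inequality and the coupling bound $W_2(\rho,\xi_\varepsilon*\rho)\le\varepsilon\sqrt{M_2(\xi)}$, and the self-referential closure via $M_2(\pi_\varepsilon)\le 2M_2(\pi)+2W_2^2(\pi_\varepsilon,\pi)$ are all exactly the steps the paper carries out. The only cosmetic difference is that the paper obtains $\fve(\pi)\le 0$ from the identity $\fve(\rho)-\KL(\rho|\pi)=-\KL(\rho|\Kve*\rho)\le 0$ rather than from $\log t\le t-1$, and it explicitly separates out the trivial case $W_2(\pi_\varepsilon,\pi)\le\varepsilon\sqrt{M_2(\xi)}$ before closing the inequality.
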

We remark that analogous result holds if we consider the energy on the torus $\T^d$. The integrals considered are on the torus, while for evaluating convolutions, all of the functions are extended periodically to $\R^d$.

\begin{proof}
To get started, we have for arbitrary $\rho$, \begin{equation}\label{eqn:fvesmallerKL}\fve(\rho)-\KL(\rho|\pi) = -\KL(\rho|\Kve*\rho) \le 0.\end{equation} In particular, taking $\rho=\pi_\varepsilon$ where $\pi_\varepsilon$ is any minimizer of $\fve$, we have \begin{equation}\label{eqn:fvepieple0} \fve(\pi_\varepsilon)\le \fve(\pi) \le \KL(\pi|\pi)=0.\end{equation} On the other hand, since $\Kve=\xieps* \xieps$ and $x\mapsto \log x$ is concave, we use Jensen inequality to obtain
\begin{equation}\label{eqn:jensen}
    \log(\Kve*\rho) = \log(\xieps*\xieps*\rho) \ge \xieps*\log(\xieps*\rho)
\end{equation}
We then use the regularity of $V$ and Talagrand inequality:
\begin{align*}
    \fve(\rho) & = \int \rho \log(\Kve * \rho) - \int \rho\log \pi \\ & \leftstackrel{\eqref{eqn:jensen}}{\ge} \int (\xieps * \rho) \log(\xieps * \rho) - \int \rho\log \pi \\ & = \KL(\xieps*\rho | \pi) + \int (\xieps*\rho-\rho) \log \pi \\ & \ge   \frac{m}{2}W_2^2(\xieps*\rho,\pi) + \int \rho(x) \int \xieps(x-y) (V(x) -V(y)) \ud y \ud x. \stepcounter{equation} \tag{\theequation} \label{eqn:fverho1}
\end{align*} For the first term, we can use triangle inequality \begin{equation} \label{eqn:fverho2} W_2(\xieps*\rho,\pi) \ge |W_2(\rho,\pi)- W_2(\xieps*\rho,\rho)| \ge W_2(\rho,\pi)-\sqrt{M_2(\xi)}\varepsilon.\end{equation} For the second term, by Taylor expansion, we have\footnote{Here we use the short hand notation $(y-x)^3 : D^3 V(x) := \sum_{i,j,k} (y_i-x_i)(y_j-x_j)(y_k-x_k) \partial_{ijk}V(x)$, similarly for the fourth derivative term.} \[V(y)-V(x) = \nabla V(x) \cdot (y-x) + \frac{1}{2}(y-x)^\top D^2V(x) (y-x) + \frac{1}{6}(y-x)^3 : D^3 V(x) + \frac{1}{24}(y-x)^4 : D^4 V(\zeta)  \] for some $\zeta \in [x,y]$. Then, we appeal to the symmetry of the kernel $\xi$ and derive,  \begin{align*}
    \int \rho(x)& \int  \xieps(x-y) (V(x) -V(y)) \ud y \ud x \\ & =  \int \rho(x) \int \xieps(x-y) \left(\frac{1}{2}(y-x)^\top D^2 V(x) (y-x) + \frac{1}{24}(y-x)^4 : D^4 V(\zeta)\right) \ud y \ud x \\ & \leftstackrel{\eqref{eqn:thm5condV}}{\ge} -\frac{L}{2}\iint(1+|x|^2) \xieps(x-y) |y-x|^2 \rho(x)   - \frac{L}{24} \iint \rho(x) \xieps(x-y) |y-x|^4 \\& = -\frac{L\varepsilon^2}{2}M_2(\xi) \int \rho(x) (1+|x|^2) \ud x - \frac{L\varepsilon^4}{24}M_4(\xi) \\ &  \ge  -L\varepsilon^2M_2(\xi)\Big(M_2(\pi)+W_2^2(\rho,\pi)+\frac{1}{2}\Big)- \frac{L\varepsilon^4}{24}M_4(\xi). \stepcounter{equation} \tag{\theequation} \label{eqn:fverho3}
\end{align*}  Here in the last step, we use that for any $\gamma(x,y)$ being a coupling between $\rho$ and $\pi$, \[\int|x|^2 \ud \rho \le 2\inf_\gamma \int|x-y|^2 \ud \gamma(x,y) + 2\int |y|^2 \ud \pi(y)= 2 W_2^2(\rho,\pi)+2M_2(\pi). \] If $W_2(\pi_\varepsilon,\pi) \le \varepsilon \sqrt{M_2(\xi)}$ then the theorem is immediate. Otherwise, we combine \eqref{eqn:fverho1}, \eqref{eqn:fverho2} and \eqref{eqn:fverho3} for $\rho=\pi_\varepsilon$ to obtain that there exists a constant $C=C(\pi,\xi)>0$ independent of $\varepsilon$ such that  \begin{align*}
    0 \ge \fve(\pi_\varepsilon) \ge \frac{m}{2}(W_2(\pi_\varepsilon,\pi) -\sqrt{M_2(\xi)}\varepsilon)^2 - L\varepsilon^2 W_2^2(\pi_\varepsilon,\pi)M_2(\xi)-C\varepsilon^2,
\end{align*} which leads to \eqref{eqn:w2piepspi} after completing the square, and \eqref{eqn:minvalfve} after optimizing the right hand side above with $W_2(\pi_\varepsilon,\pi)$. \end{proof}

\begin{remark}We are not able to prove the sharpness of the error estimate \eqref{eqn:w2piepspi}. However, we demonstrate via numerical experiments that the error bound $O(\varepsilon)$ above seems to be optimal. Consider the target measure $\pi(x) \propto \, e^{-V(x)}$ on the 1-dimensional torus $\T = \R/2\pi\mathbb{Z}$ with potential $V(x) = \sin x + 2\sin 2x$. We solve the equations \eqref{eqn:purebd} and \eqref{eqn:gfkerbd} using the finite difference method and use numerical integration to compute the evolution of $\KL(\rhove_t|\pi)$ for various values of $\varepsilon$. The left side of Figure \ref{fig:1dtorus} shows that for a fixed $\varepsilon > 0$, $\KL(\rhove_t|\pi)$ approaches to a positive constant as $t\to\infty$. The right side of   Figure \ref{fig:1dtorus} plots the function $\varepsilon \mapsto \KL(\rhove_T|\pi)$ for $T=15$, which indicates that $\KL(\rhove_\infty | \pi) \approx O(\varepsilon^2)$ as $\varepsilon \rightarrow 0$. This is consistent with the scaling in \eqref{eqn:w2piepspi} in view of the Talagrand inequality \eqref{eqn:talagrand}. 
\end{remark}

\begin{figure} 
    \begin{minipage}{3in}
    \includegraphics[width=2.85in]{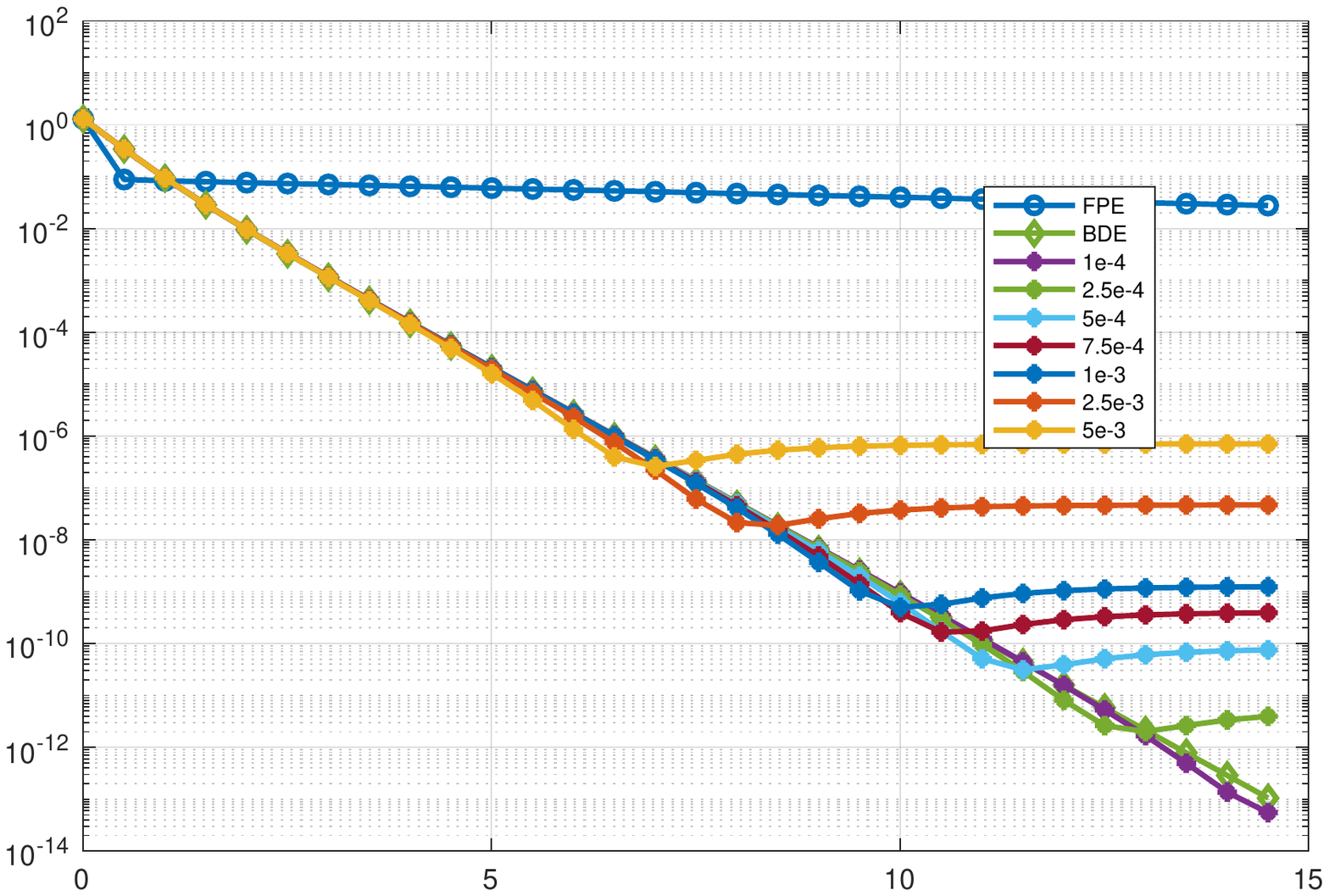}
    \end{minipage}
    \begin{minipage}{3in}
    \includegraphics[width=2.85in]{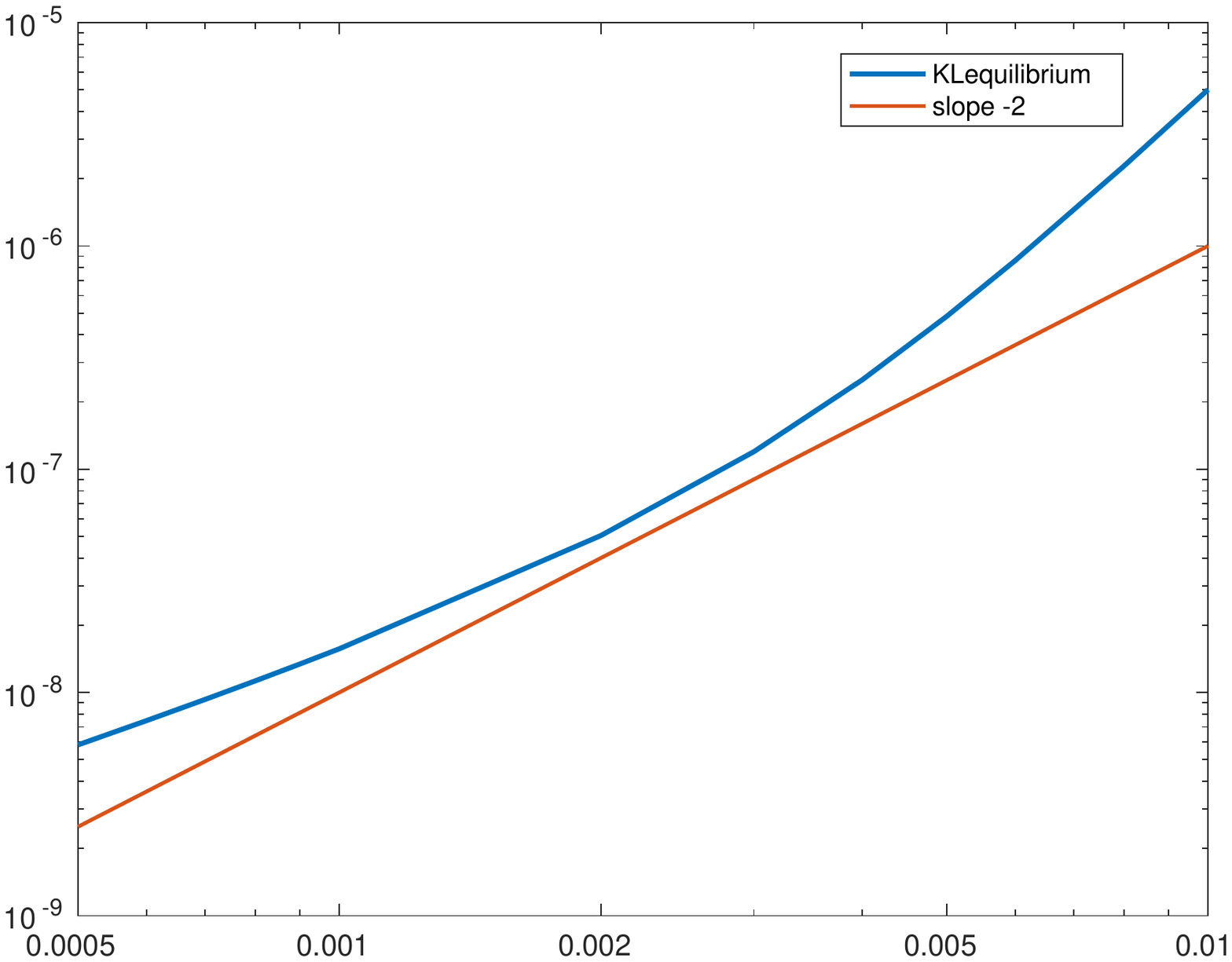}
    \end{minipage}
    \caption{1D torus example. Left: evolution of $\KL(\rhove_t | \pi)$ for various $\varepsilon$, which heuristically goes to some fixed number as $t\to\infty$ for every fixed $\varepsilon$. Right: the relationship between $\varepsilon$ and $\KL(\rhove_\infty|\pi)$, which scales like $O(\varepsilon^2)$ as $\varepsilon\to 0$.}
    \label{fig:1dtorus}
\end{figure}

\subsection{Well-posedness of gradient flows \eqref{eqn:gfkerbd}}\label{sec:wpBDeps}


In this section we develop the well-posedness of \eqref{eqn:gfkerbd}  considered on the torus $\T^d$ and establish the elements of its gradient flow structure relevant for the convergence argument. Due to the smoothness of the terms of the equation it is easy to show well-posedness in the classical sense of ODE in Banach spaces. More precisely we first show local well-posedness in the spaces of measures on the torus which are bounded from above and below by a positive constant. We then show $L^\infty$ bounds that allow us to extend the solution up to some large $T_\varepsilon \xrightarrow{\varepsilon\to 0} \infty$.
After establishing the existence and uniqueness of classical $L^1$ solutions, we show that both energies $\calF$ and $\fve$ are $\lambda$-geodesically convex with respect to $d_{SH}$, in the restricted sense described in Definition \ref{def:convex}. We then characterize the subdifferential of  $\fve$ with respect to the $d_{SH}$ geometry. These allow us to show that the classical solutions coincide with gradient flow solutions, as well as the curves of maximal slope. 

\smallskip

For $C>1$ let 
\begin{equation} \label{eq:PC}
    \mathcal{P}_C =\left\{\rho \in \mathcal{P}(\mathbb{T}^d)\cap L^1(\mathbb{T}^d), \text{ and } \frac{1}{C} \le \rho(x) \le C, \text{ a.e. in }\mathbb{T}^d \right\}.
\end{equation}

\begin{lemma} \label{lem:loc-existBDe}
Assume $\Kve$ satisfies Assumption \ref{assump:kernel}, and $\pi(x) \propto \, e^{-V(x)}$ where $V$ is a $C^2$ function of $\T^d$. 
For any $C>1$ and  $\rho_0 \in \mathcal{P}_{C/2}$ 
there exists $T>0$, independent of $\rho_0$ and a unique $\rhove \in C^1([0,T], (\mathcal{P}_C, \|\, \cdot \, \|_{L^1}))$ solving \eqref{eqn:gfkerbd}. 
Moreover $\rhove \in C^1([0,T], (\mathcal{P}_C, \|\, \cdot \, \|_{L^2}))$.
Finally if $\rho_0 \in C^1(\T^d)$ then $\rhove_t \in C^1(\T^d)$ for all $t \in [0,T]$. 
\end{lemma}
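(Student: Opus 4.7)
The plan is to read \eqref{eqn:gfkerbd} as a Banach-space ODE $\dot\rho_t = \mathcal{N}(\rho_t)$ on $L^\infty(\T^d)$ and apply the classical Picard--Lindelöf theorem on the closed subset $\mathcal{P}_C$. Write the right-hand side as $\mathcal{N}(\rho) = -\rho\,\Phi(\rho)$ with
\[
\Phi(\rho) := \log\!\frac{\Kve*\rho}{\pi} + \Kve*\!\left(\frac{\rho}{\Kve*\rho}\right) - \int \log\!\frac{\Kve*\rho}{\pi}\, d\rho - 1.
\]
The enabling observation is that for $\rho\in\mathcal{P}_C$, since $\Kve\geq 0$ and $\int\Kve = 1$, the convolution $\Kve*\rho$ is pointwise trapped between $1/C$ and $C$, so $\log(\Kve*\rho)$, $\rho/(\Kve*\rho)$ and $\Kve*(\rho/\Kve*\rho)$ all take values in a compact interval away from $0$ and $\infty$. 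Combined with boundedness of $V\in C^2(\T^d)$ on the torus and $\Kve\in L^\infty\cap C^\infty$, this yields a uniform bound $\|\Phi(\rho)\|_{L^\infty}\leq B_C$ on $\mathcal{P}_C$, hence $\|\mathcal{N}(\rho)\|_{L^\infty}\leq CB_C =: M_C$.

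Next I would establish a uniform Lipschitz estimate $\|\mathcal{N}(\rho_1)-\mathcal{N}(\rho_2)\|_{L^\infty}\leq L_C\|\rho_1-\rho_2\|_{L^\infty}$ on $\mathcal{P}_C$. Decomposing $\mathcal{N}(\rho_1)-\mathcal{N}(\rho_2) = -(\rho_1-\rho_2)\Phi(\rho_1) - \rho_2(\Phi(\rho_1)-\Phi(\rho_2))$, each term of $\Phi(\rho_1)-\Phi(\rho_2)$ is controlled by combining: (i) that $\log$ is Lipschitz on $[1/C,C]$; (ii) that $\rho\mapsto \Kve*\rho$ is a bounded linear map $L^1(\T^d)\to L^\infty(\T^d)$ with operator norm at most $\|\Kve\|_{L^\infty}$; and (iii) that $\rho/(\Kve*\rho)$ is Lipschitz on $\mathcal{P}_C$ because both factors are bounded in $[1/C,C]$.

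With these two estimates, Picard--Lindelöf in $L^\infty(\T^d)$ produces a unique local classical solution $\rho \in C^1([0,T'],L^\infty(\T^d))$ for every $\rho_0\in \mathcal{P}_C$, with $T'$ depending only on $C$. To promote this to a $\mathcal{P}_C$-valued solution for $\rho_0\in \mathcal{P}_{C/2}$, I would use the multiplicative Duhamel representation
\[
\rho_t(x) = \rho_0(x)\,\exp\!\left(-\int_0^t \Phi(\rho_s)(x)\,ds\right),
\]
which is equivalent to \eqref{eqn:gfkerbd} for the classical solution. The uniform bound $\|\Phi(\rho_s)\|_{L^\infty}\leq B_C$ yields $e^{-B_C t}\rho_0 \leq \rho_t \leq e^{B_C t}\rho_0$ pointwise, so choosing $T:=\min\{T',\,(\log 2)/B_C\}$ (depending only on $C$) keeps $\rho_t\in \mathcal{P}_C$ on $[0,T]$, since $\rho_0\in [2/C,C/2]$.

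Finally, mass preservation follows from the symmetry of $\Kve$: using $\int \rho\,\Kve*f = \int (\Kve*\rho)\,f$, a direct computation gives $\frac{d}{dt}\int\rho_t = I(t)\bigl(\int\rho_t - 1\bigr)$ with $I(t):=\int\log(\Kve*\rho_t/\pi)\,d\rho_t$, so $\int\rho_0=1$ forces $\int\rho_t\equiv 1$ by Gronwall. If $\rho_0\in C^1(\T^d)$, the multiplicative factor in the Duhamel formula is $C^\infty$ in $x$ (convolutions with smooth $\Kve$ are smooth, $V\in C^2$, and $\log$ is applied to a smooth function bounded away from zero), so $\rho_t \in C^1(\T^d)$. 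The $C^1$-regularity in $L^2$ and $L^1$ follows from that in $L^\infty$ via the continuous embeddings $L^\infty(\T^d)\hookrightarrow L^2(\T^d)\hookrightarrow L^1(\T^d)$. The main obstacle to watch is that the lifespan $T$ must be independent of $\rho_0$; this is precisely what the uniform bound $B_C$ on $\|\Phi(\rho)\|_{L^\infty}$ for $\rho\in \mathcal{P}_C$ provides, together with the factor-of-$2$ slack between $\mathcal{P}_{C/2}$ and $\mathcal{P}_C$ that absorbs pointwise growth.
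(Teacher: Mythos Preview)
Your proposal is correct and follows essentially the same strategy as the paper: treat \eqref{eqn:gfkerbd} as a Banach-space ODE and invoke Picard--Lindel\"of, using that on $\mathcal{P}_C$ the right-hand side is uniformly bounded in $L^\infty$ and Lipschitz. The only notable difference is the choice of norm: the paper carries out the Lipschitz estimate in $L^1$ (where $\rho\mapsto \Kve*\rho$ is $1$-Lipschitz), while you work entirely in $L^\infty$; both are valid on the torus. Your treatment is in fact more explicit than the paper's on two points the paper leaves implicit: the multiplicative Duhamel formula that converts the uniform bound on $\Phi$ into the pointwise confinement $\rho_t\in\mathcal{P}_C$ (exploiting the gap between $\mathcal{P}_{C/2}$ and $\mathcal{P}_C$), and the mass-preservation computation via the identity $\int\rho\,\Kve*(\rho/\Kve*\rho)=\int\rho$.
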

\begin{proof}
The existence and uniqueness follow from the classical existence of ODE in Banach spaces. In particular we claim that te right-hand side of \eqref{eqn:gfkerbd},
\[ A(\rhove) := \rhove\left[\log \left(\frac{\Kve*\rhove}{\pi}\right) + \Kve * \left( \frac{ \rhove}{\Kve* \rhove} \right) - \! \int \log \left(\frac{\Kve*\rhove}{\pi}\right) d \rhove_t -1 \right] \]
is Lipschitz continuous in $L^1$ norm on $\mathcal{P}_C$ and is uniformly bounded in $L^\infty$. That is, there exist $L,M \in (0, \infty)$ such that for all $\rho, \sigma \in \mathcal{P}_C$,
\[ \| A(\rho) - A(\sigma) \|_{L^1(\T^d)} \leq L \| \rho -  \sigma \|_{L^1(\T^d)} \quad \textrm{ and }\quad \| A(\rho)\|_{L^\infty(\T^d)} \leq M. \]
Showing this is straightforward and we only sketch the argument. Observe that $\rho \mapsto \Kve * \rho$ is a 1-Lipschitz mapping on $L^1$ and that $\frac{1}{C} < \Kve * \rho < C$ for all $\rho \in \mathcal{P}_C$. We also use  that $V$ is bounded and continuous on $\T^d$. Furthermore note that 
\begin{align*}
\left \| \Kve* \left(\frac{\sigma}{\Kve*\sigma} - \frac{\rho}{\Kve*\rho} \right)  \right \|_{L^1} 
\leq \left \| \frac{\sigma}{\Kve*\sigma} - \frac{\rho}{\Kve*\rho}  \right \|_{L^1} 
&  \leq  \left\|  \frac{\sigma - \rho}{\Kve*\sigma}  \right\|_{L^1}  +  \left\| \frac{\rho \, \Kve*(\sigma -\rho)|}{\Kve*\sigma \, \Kve*\rho} \right\|_{L^1}  \\
& \leq (C + C^3) \| \rho -  \sigma \|_{L^1}. 
\end{align*} 
Combining these facts provides the desired constants $L$ and $M$. 
Thus there exists a $T>0$ and $\rhove \in C^1([0,T], L^1(\T^d))$ such that $\rhove_t \in \mathcal{P}_C$ for all $t \in [0,T]$, $\rhove(0) = \rho_0$, and $\partial_t \rhove_t = A(\rhove_t)$ for all $t \in [0,T]$, where the derivative is taken in $L^1(\T^d)$. The fact that $\rhove \in C^1([0,T], (\mathcal{P}_C, \|\, \cdot \, \|_{L^2}))$ follows from $\rhove \in C^1([0,T], (\mathcal{P}_C, \|\, \cdot \, \|_{L^2}))$ and the $L^\infty$ boundedness of $\rhove_t$ and $A(\rhove_t)$.

\smallskip

The proof  that the solution is in $C^1(\T^d)$ follows in the standard way. Namely we first observe that, once we know that $\rhove$ is a solution in $\mathcal P_C$ then it satisfies
$\partial_t \rhove =  h(x,t) \rhove$ where $h$ is continuous and bounded. This implies that $\rho(t) \in C(\T^d)$ for all $t \in [0,T]$. To show the $C^1$ regularity one needs to show that $u= \partial_{x_i} \rhove$ satisfies an integral equation. Note that  taking a derivative of 
\eqref{eqn:gfkerbd} gives that $u$ satisfies a linear integral equation. Obtaining the existence of the solution and showing that it is the derivative of $\rhove$ is straightforward and we do not present the details to conserve space. 
\end{proof}
%
\nc
The next few lemmas aim to establish $L^\infty$ upper and lower bounds for $\rhove_t$, which allows us to extend the local existence theory to long time intervals. 
%

\begin{lemma}\label{lem:rhokverhoclose}
Suppose that $K \in C_c^\infty(B_1)$ satisfies Assumption \ref{assump:kernel}, and that $w=\log \rho$ is $L$-Lipschitz continuous, then for any fixed $x$, we have \begin{equation}\label{eqn:comparerhokverho}
    e^{-L\varepsilon} \le\frac{\Kve*\rho(x)}{\rho(x)} \le e^{L\varepsilon}.
\end{equation}
\end{lemma}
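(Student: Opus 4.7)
My plan is to exploit the compact support of $K$ together with the Lipschitz bound on $w=\log\rho$ to compare $\rho(y)$ to $\rho(x)$ pointwise in a small ball, then integrate.

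First I would note that since $K \in C_c^\infty(B_1)$, the scaled kernel $\Kve(z) = \varepsilon^{-d}K(z/\varepsilon)$ is supported in $B_\varepsilon$. Hence
\[
\Kve * \rho(x) = \int_{B_\varepsilon(x)} \Kve(x-y)\, \rho(y)\, \ud y,
\]
and the integration variable $y$ satisfies $|y-x|\le \varepsilon$. From the $L$-Lipschitz continuity of $w=\log\rho$ we get, for all such $y$,
\[
|\log \rho(y) - \log \rho(x)| \le L|y-x| \le L\varepsilon,
\]
which exponentiates to $\rho(x)\, e^{-L\varepsilon} \le \rho(y) \le \rho(x)\, e^{L\varepsilon}$.

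Next I would insert these two-sided pointwise bounds into the convolution. Using $\Kve \ge 0$ and $\int \Kve = 1$ (from Assumption \ref{assump:kernel}(i)) gives
\[
\rho(x)\, e^{-L\varepsilon} = \rho(x)\, e^{-L\varepsilon}\!\! \int_{B_\varepsilon(x)} \!\!\Kve(x-y)\,\ud y \;\le\; \Kve*\rho(x) \;\le\; \rho(x)\, e^{L\varepsilon}\!\! \int_{B_\varepsilon(x)}\!\! \Kve(x-y)\,\ud y = \rho(x)\, e^{L\varepsilon}.
\]
Dividing through by $\rho(x)$ (which is strictly positive since $w=\log\rho$ is defined and finite) yields \eqref{eqn:comparerhokverho}.

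There is really no obstacle here: the only subtleties are that the compact support hypothesis on $K$ is being used in an essential way (the statement would need adjustment for a general kernel such as the Gaussian, where one would obtain an analogous estimate up to a tail-controlled error), and that positivity of $\rho$ is needed so that $\log \rho$ makes sense pointwise and $\rho(x)>0$ allows the division. Both are implicit in the hypothesis that $w=\log\rho$ is $L$-Lipschitz on $\T^d$.
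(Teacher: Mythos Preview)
Your proof is correct and follows essentially the same approach as the paper's: both use the compact support of $K_\varepsilon$ to localize the convolution to $|y-x|\le\varepsilon$, apply the Lipschitz bound on $w=\log\rho$ to get $|w(y)-w(x)|\le L\varepsilon$, and then integrate against the unit-mass kernel. The paper writes the ratio directly as $\int K_\varepsilon(x-y)\exp(w(y)-w(x))\,\ud y$ and bounds the exponential, while you bound $\rho(y)$ first and then integrate; these are the same computation.
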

\begin{proof}
\begin{align*}
    \frac{\Kve*\rho(x)}{\rho(x)} & = \int_{|y-x|\le \varepsilon} \Kve(x-y) \exp\left( w(y)-w(x)\right)\ud y \\ & \le  \int_{|y-x|\le \varepsilon}\Kve(x-y)e^{L\varepsilon}\ud y = e^{L\varepsilon}.
\end{align*}
Similarly, we have $\frac{\Kve*\rho(x)}{\rho(x)} \ge e^{-L\varepsilon}$.
\end{proof}
\nc
\begin{lemma}\label{lem:upperbdgradw} 
Let $\varepsilon>0$. Assume $\rhove_t\in C^1([0,T],\mathcal{P}_C(\mathbb{T}^d))$ is a solution of \eqref{eqn:gfkerbd} with initial condition $\rho_0 \in C^1(\mathbb{T}^d) \cap \mathcal{P}_C(\T^d)$ for some $C>1$, and let $\wvet = \log \rhove_t$. Then there exists  constant $\overline C$ such that for any $t\in [0, \min\{T, T_\varepsilon\}]$, 
\begin{equation}\label{eqn:derivbdwvet}
    \|\nabla \wvet \|_{L^\infty(\T^d)} \le e^{(1+2e^{C_0})t} \left( \|\nabla w_0 \|_{L^\infty(\T^d)}+ \frac{\|\nabla V\|_{L^\infty(\T^d)}}{1+2e^{C_0}} \right) - \frac{\|\nabla V\|_{L^\infty(\T^d)}}{1+2e^{C_0}},
\end{equation}
where  $T_\varepsilon$ is defied in the proof and satisfies that $T_\varepsilon \to \infty$ as $\varepsilon \to 0$.
\end{lemma}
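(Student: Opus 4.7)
The plan is to derive a pointwise evolution equation for $\nabla \wvet$, bound each term on the right-hand side in terms of $\|\nabla \wvet\|_{L^\infty}$ (using Lemma~\ref{lem:rhokverhoclose} to control the ratio $\rhove_t/(\Kve*\rhove_t)$), and then apply a Gr\"onwall argument. The constant $C_0$ plays the role of an a priori cap on $\varepsilon\|\nabla \wvet\|_{L^\infty}$, and $T_\varepsilon$ is defined as the largest time up to which this cap is maintained.

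First, since \eqref{eqn:gfkerbd} yields $\partial_t \wvet = -\log(\Kve*\rhove_t)-V-\Kve*\!\tfrac{\rhove_t}{\Kve*\rhove_t} + c(t)$ for a spatially constant $c(t)$, differentiating in space gives
\[
\partial_t \nabla \wvet \;=\; -\,\frac{\Kve * (\rhove_t \nabla \wvet)}{\Kve*\rhove_t} \;-\; \nabla V \;-\; \Kve * \nabla\!\left(\frac{\rhove_t}{\Kve*\rhove_t}\right).
\]
The first term is immediately bounded in sup-norm by $\|\nabla \wvet\|_{L^\infty}$ since $K_\varepsilon*(\rho \nabla w)/(K_\varepsilon*\rho)$ is a weighted average. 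For the third term, set $g_t := \rhove_t/(\Kve*\rhove_t)$ and compute
\[
\nabla g_t \;=\; g_t\,\nabla \wvet \;-\; g_t\,\frac{\Kve*(\rhove_t \nabla \wvet)}{\Kve*\rhove_t},
\]
so $\|\nabla g_t\|_{L^\infty}\leq 2\|g_t\|_{L^\infty}\|\nabla \wvet\|_{L^\infty}$, and hence $\|\Kve*\nabla g_t\|_{L^\infty}\leq 2\|g_t\|_{L^\infty}\|\nabla\wvet\|_{L^\infty}$.

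The role of $C_0$ is now clear. Define
\[
T_\varepsilon \;:=\; \sup\Big\{\, t\in[0,T] : \varepsilon\,\|\nabla \wvet[s]\|_{L^\infty}\leq C_0 \ \text{ for all }s\in[0,t]\,\Big\}.
\]
On $[0,T_\varepsilon]$, Lemma~\ref{lem:rhokverhoclose} (applied with Lipschitz constant $L=\|\nabla \wvet[s]\|_{L^\infty}$) gives $\|g_s\|_{L^\infty}\leq e^{C_0}$. Combining the three bounds,
\[
\|\partial_t \nabla \wvet\|_{L^\infty}\;\leq\;(1+2e^{C_0})\,\|\nabla \wvet\|_{L^\infty} + \|\nabla V\|_{L^\infty}\quad\text{on }[0,T_\varepsilon].
\]
A standard Dini-derivative argument (differentiating $|\nabla\wvet(x)|^2$ at an $x$ where the supremum is attained, using the $C^1$ regularity from Lemma~\ref{lem:loc-existBDe}) promotes this to the same inequality for $\phi(t):=\|\nabla\wvet\|_{L^\infty}$, and Gr\"onwall then yields exactly \eqref{eqn:derivbdwvet}.

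The main subtlety, which I would handle last, is showing $T_\varepsilon \to \infty$ as $\varepsilon\to 0$. By definition of $T_\varepsilon$, at $t=T_\varepsilon$ either $T_\varepsilon=T$ (no issue) or $\varepsilon\phi(T_\varepsilon)=C_0$. In the latter case, inserting the Gr\"onwall bound \eqref{eqn:derivbdwvet} and solving for $T_\varepsilon$ gives
\[
T_\varepsilon \;\geq\; \frac{1}{1+2e^{C_0}}\log\!\left(\frac{C_0/\varepsilon + \|\nabla V\|_{L^\infty}/(1+2e^{C_0})}{\|\nabla w_0\|_{L^\infty}+\|\nabla V\|_{L^\infty}/(1+2e^{C_0})}\right),
\]
which tends to $+\infty$ as $\varepsilon\to 0$. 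This also explains the role of the auxiliary constant $\overline C$ in the statement: it absorbs the $\varepsilon$-independent factors in this lower bound. The only real obstacle is the Dini-derivative step, i.e.\ justifying that $\phi(t)=\|\nabla \wvet\|_{L^\infty}$ satisfies the scalar differential inequality; this is routine given the $C^1$ regularity of $\rhove_t$ and the compactness of $\T^d$, but it requires some care to write precisely.
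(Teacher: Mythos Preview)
Your proposal is correct and follows essentially the same route as the paper: derive the evolution equation for $\nabla\wvet$, bound the three terms using Lemma~\ref{lem:rhokverhoclose} to control $\rhove_t/(\Kve*\rhove_t)$, obtain the differential inequality $\phi'\le(1+2e^{\varepsilon\phi})\phi+\|\nabla V\|_{L^\infty}$, and close via a bootstrap on $\varepsilon\phi$.

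The only organizational difference is in how the bootstrap is packaged. You fix the cap $C_0$ in advance and define $T_\varepsilon$ as the exit time for $\{\varepsilon\phi\le C_0\}$, then read off $T_\varepsilon\to\infty$ from the explicit Gr\"onwall bound. The paper instead compares $\tilde L_\varepsilon:=\varepsilon\phi$ to the nonlinear ODE $z'=\varepsilon\|\nabla V\|_{L^\infty}+z(1+2e^z)$, sets $T_\varepsilon$ to half its blow-up time $\tau_\varepsilon$ (which tends to $\infty$ as $\varepsilon\to0$ since the integrand $1/(z(1+2e^z))$ is non-integrable at $0$), and then \emph{defines} $\overline C:=z_\varepsilon(T_\varepsilon)$. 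So in the paper $\overline C$ \emph{is} the cap---exactly your $C_0$---and the appearance of both $\overline C$ and $C_0$ in the statement is a typo; your remark that $\overline C$ ``absorbs $\varepsilon$-independent factors in the lower bound'' is therefore a misreading of the statement rather than a mathematical issue. Your version is arguably cleaner since it avoids the ODE comparison entirely.
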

\begin{proof}
We start with the observation that $\wvet$ satisfies the equation \begin{equation*}
    \partial_t \wvet = - \log \frac{\Kve* \rhove_t}{\pi}-\Kve* \frac{\rhove_t}{\Kve* \rhove_t} + \fve(\rhove_t) +1.
\end{equation*}
Taking spatial derivative, we have 
\begin{align*}\partial_t \partial_i \wvet & = - \frac{ \Kve* \partial_i\rhove_t}{\Kve* \rhove_t} -\partial_i V - \Kve*\partial_i  \frac{\rhove_t}{\Kve* \rhove_t}\\ &  = - \frac{\Kve* (\rhove_t \partial_i \wvet)}{\Kve* \rhove_t} - \partial_i V - \Kve* \frac{\rhove_t \partial_i \wvet \Kve* \rhove_t - \rhove_t \Kve * \rhove_t \partial_i w_t^{(\varepsilon)}}{(\Kve* \rhove_t)^2}. \stepcounter{equation} \tag{\theequation} \label{eqn:dtwvet} \end{align*}
Now fix an index $i$, and let $L_\varepsilon(t) = \sup_{x\in \T^d} |\partial_i\wvet|$, we have 
\begin{align*}
    |\partial_t \partial_i \wvet| \leftstackrel{\eqref{eqn:dtwvet}}{\le} |\partial_i V| + \left(1+2\Kve* \frac{\rhove_t}{\Kve* \rhove_t} \right) L_\varepsilon(t) \leftstackrel{\eqref{eqn:comparerhokverho}}{\le} \|\nabla V\|_{L^\infty(\T^d)} +(1+2e^{\varepsilon L_\varepsilon(t) })L_\varepsilon(t).
\end{align*}
After taking supremum on the left  side above, this turns into \begin{equation}\label{eqn:bootstrapingLvepst}
    L_\varepsilon'(t) \le \|\nabla V\|_{L^\infty(\T^d)} +(1+2e^{\varepsilon L_\varepsilon(t) })L_\varepsilon(t).
\end{equation}
Let $\tilde{L}_\varepsilon = \varepsilon L_\varepsilon$, then 
\[
\frac{\ud}{\ud t} \tilde{L}_\varepsilon(t) \le \varepsilon \|\nabla V\|_{L^\infty(\T^d)} + \tilde{L}_\varepsilon(t)(1+2e^{\tilde{L}_\varepsilon(t)}).
\]
Notice that for every $\varepsilon>0$, the solution $z_\varepsilon$ to the ODE problem
$$
\frac{\ud}{\ud t} z_\varepsilon(t) = \varepsilon \|\nabla V\|_{L^\infty(\T^d)} +z_\varepsilon(t)(1+2e^{z_\varepsilon(t)}), \quad z_\varepsilon(0)= \varepsilon L(0)
$$
blows up at the finite time 
$$
\tau_{\varepsilon} := \int_{z_\varepsilon(0)}^\infty \frac{1}{\varepsilon \|\nabla V\|_{L^\infty} + z(1+2 e^z)} \ud z < \infty.
$$
However, as $\varepsilon \rightarrow 0$,  one has that  $z_\varepsilon(0) \rightarrow 0$ and $\varepsilon \|\nabla V\|_{L^\infty(\T^d)}  \rightarrow 0$. Therefore, 
$$
\tau_\varepsilon \rightarrow \int_0^\infty \frac{1}{z (1+2 e^z)} \ud z = \infty.
$$
Notice that $z_\varepsilon(t)$ is an increasing function of both $t$ and $\varepsilon$. Let $T_\varepsilon = \frac12 \tau_\varepsilon$. 
Let $\overline C =z_{\varepsilon}(T_\varepsilon)$. Then 
$\tilde{L}_\varepsilon(t) \leq \overline C $
for all $t \leq \min\{T, T_\varepsilon\}$.
Taking account of above in the differential inequality \eqref{eqn:bootstrapingLvepst}, one obtains from Gronwall's inequality that 
for all $t\in [0, \min\{T, T_\varepsilon\}]$,
\[
L(t) \leq e^{(1+2e^{\overline C})t} \left(L(0) + \frac{\|\nabla V\|_{L^\infty(\T^d)}}{1+2e^{\overline C}}\right) - \frac{\|\nabla V\|_{L^\infty(\T^d)}}{1+2e^{\overline C}},
\]
which is precisely \eqref{eqn:derivbdwvet}.
\end{proof}

\begin{lemma}\label{lem:upperbdrhovet}
Under the same conditions as Lemma \ref{lem:upperbdgradw}, suppose that for all $t\in[0,T]$, $\wvet=\log \rhove_t$ is $L$-Lipschitz continuous. Then 
\begin{equation}\label{eqn:supbdrve}
    \sup_{\T^d} \frac{\rhove_t}{\pi} \le \exp\left((1-e^{-t})\left(L\varepsilon + \KL(\rho_0|\pi) +1\right) + e^{-t} \log \frac{\rho_0}{\pi}\right)
\end{equation}and 
\begin{equation}\label{eqn:infbdrve}
    \inf_{\T^d} \frac{\rhove_t}{\pi} \ge \exp\left(-(1-e^{-t})\left(L\varepsilon + e^{L\varepsilon} +C_\pi\varepsilon^2\right) + e^{-t} \log \frac{\rho_0}{\pi}\right).
\end{equation}
Here $C_\pi$ is the constant depending on $\pi$ that appears in Theorem \ref{thm:quanminconv}, such that $\fve(\pi_\varepsilon) \ge -C_\pi \varepsilon^2$.
\end{lemma}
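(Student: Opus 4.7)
My strategy is to reduce \eqref{eqn:gfkerbd} to a pointwise linear first-order ODE inequality for $\eta_t(x) := \log(\rhove_t(x)/\pi(x))$, in the spirit of the argument for \eqref{eqn:expbdrt1} in Theorem~\ref{thm:expconvbdkl}. Dividing \eqref{eqn:gfkerbd} by $\rhove_t$, noting that $\partial_t \log \rhove_t = \partial_t \eta_t$ since $\pi$ does not depend on $t$, and recognizing $\int \log(\Kve*\rhove_t/\pi)\, d\rhove_t = \fve(\rhove_t)$, I would rewrite the evolution as
\[
\partial_t \eta_t + \eta_t \;=\; -\log\frac{\Kve*\rhove_t}{\rhove_t} \;-\; \Kve * \Bigl(\frac{\rhove_t}{\Kve*\rhove_t}\Bigr) \;+\; \fve(\rhove_t) + 1,
\]
the key algebraic step being the decomposition $\log(\Kve*\rhove_t/\pi) = \log(\Kve*\rhove_t/\rhove_t) + \eta_t$, which exposes the linear $\eta_t$ term on the left so Gr\"onwall will later apply cleanly.

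The second step is to bound the right-hand side pointwise, uniformly in $(t,x)$, using ingredients already in place. The $L$-Lipschitz assumption on $\wvet$ together with Lemma~\ref{lem:rhokverhoclose} gives $|\log(\Kve*\rhove_t/\rhove_t)| \le L\varepsilon$ and $0 \le \Kve*(\rhove_t/\Kve*\rhove_t) \le e^{L\varepsilon}$. For the energy term I invoke the spherical Hellinger gradient-flow structure: a direct computation of $\tfrac{d}{dt}\fve(\rhove_t) = -\int (\delta\fve/\delta\rho)^2\,d\rhove_t + \bigl(\int \delta\fve/\delta\rho\,d\rhove_t\bigr)^2 \le 0$ (by Cauchy--Schwarz) shows $\fve(\rhove_t) \le \fve(\rho_0)$, and combining with \eqref{eqn:fvesmallerKL} yields $\fve(\rhove_t) \le \KL(\rho_0|\pi)$. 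The minimizer estimate of Theorem~\ref{thm:quanminconv} gives $\fve(\rhove_t) \ge \fve(\pi_\varepsilon) \ge -C_\pi\varepsilon^2$ from below. Plugging these bounds in yields the pointwise differential inequalities
\begin{align*}
\partial_t \eta_t + \eta_t &\le L\varepsilon + \KL(\rho_0|\pi) + 1, \\
\partial_t \eta_t + \eta_t &\ge -(L\varepsilon + e^{L\varepsilon} + C_\pi\varepsilon^2),
\end{align*}
where in the lower bound I drop the harmless $+1$ to match the form stated in \eqref{eqn:infbdrve}.

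The third step is a pointwise Gr\"onwall integration. Multiplying each inequality by $e^t$ converts it to $\partial_t(e^t\eta_t) \le C_+ e^t$ and $\partial_t(e^t\eta_t) \ge -C_- e^t$ with $C_+ = L\varepsilon + \KL(\rho_0|\pi)+1$ and $C_- = L\varepsilon + e^{L\varepsilon} + C_\pi\varepsilon^2$. Integrating from $0$ to $t$ and dividing by $e^t$ gives, at each $x$,
\[
e^{-t}\eta_0(x) - (1-e^{-t})C_- \;\le\; \eta_t(x) \;\le\; e^{-t}\eta_0(x) + (1-e^{-t})C_+.
\]
Exponentiating and taking $\sup_x$ (respectively $\inf_x$) produces \eqref{eqn:supbdrve} and \eqref{eqn:infbdrve}, since $\exp$ is monotone.

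I do not foresee a serious obstacle: once \eqref{eqn:gfkerbd} is recast in the form $\partial_t \eta_t + \eta_t = \text{(uniformly bounded)}$, the argument is bookkeeping. The only slightly delicate point is that the bounds on $\fve(\rhove_t)$ produce $t$-uniform constants (thanks to energy dissipation above and Theorem~\ref{thm:quanminconv} below), which is precisely what makes the resulting estimates exhibit the clean $e^{-t}$-interpolation between initial data and equilibrium-like values in the statement.
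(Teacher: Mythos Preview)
Your proposal is correct and follows essentially the same approach as the paper's proof: rewrite \eqref{eqn:gfkerbd} as $\partial_t \eta_t + \eta_t = -\log(\Kve*\rhove_t/\rhove_t) - \Kve*(\rhove_t/\Kve*\rhove_t) + \fve(\rhove_t) + 1$, bound the right-hand side pointwise via Lemma~\ref{lem:rhokverhoclose} together with the sandwich $-C_\pi\varepsilon^2 \le \fve(\rhove_t) \le \fve(\rho_0) \le \KL(\rho_0|\pi)$, and apply Gr\"onwall. The only cosmetic difference is that you make the energy-dissipation step $\fve(\rhove_t)\le \fve(\rho_0)$ explicit, whereas the paper invokes it silently.
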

To applying this lemma one can use the Lipschitz estimate of Lemma \ref{lem:upperbdgradw} and  take $L $ to be the right hand side of  
\eqref{eqn:derivbdwvet} for $t=T_\varepsilon$. 
\begin{proof}
Notice that \begin{align*}
    \partial_t \log \frac{\rhove_t}{\pi} & = -\log \frac{\Kve* \rhove_t}{\pi}- \Kve* \frac{\rhove_t}{\Kve*\rhove_t} + \fve(\rhove_t)+1 \\ & \leftstackrel{\eqref{eqn:comparerhokverho}, \eqref{eqn:fvesmallerKL}}{\le} -\log \frac{\rhove_t}{\pi}+ L\varepsilon+\KL(\rho_0|\pi)+1.
\end{align*}Using Gronwall's inequality, we have \begin{equation*}
    \log \frac{\rhove_t}{\pi} \le (1-e^{-t})\left(L\varepsilon + \KL(\rho_0|\pi) +1\right) + e^{-t} \log \frac{\rho_0}{\pi}.
\end{equation*}
The proof of lower bound is similar. Since \begin{align*}
    \partial_t \log \frac{\rhove_t}{\pi}  \leftstackrel{\eqref{eqn:comparerhokverho}}{\ge} -\log \frac{\rhove_t}{\pi}- L\varepsilon-e^{L\varepsilon}-C_\pi \varepsilon^2,
\end{align*}we obtain \begin{equation*}
    \log \frac{\rhove_t}{\pi} \ge e^{-t}\log \frac{\rho_0}{\pi} -(1-e^{-t})(L\varepsilon +e^{L\varepsilon} +C_\pi\varepsilon^2).
\end{equation*}
\end{proof}

\begin{theorem}\label{thm:wlBDeps}
    Let $\Kve$ satisfy Assumption \ref{assump:kernel} and is supported in $B(0,1)$. Let $\pi(x) \propto \, e^{-V(x)}$ where $V$ is a $C^2$ function of $\T^d$. Let  $C>1$ be such that $\pi \in C^1(\T^d)\cap \mathcal{P}_C$.
Consider the solution of \eqref{eqn:gfkerbd} on $\T^d$ with initial condition $\rhove_0 = \rho_0$ for some $\rho_0 \in C^1(\T^d) \cap \mathcal{P}_C$. 
For $\varepsilon>0$ there exists time $T_\varepsilon>0$ such that $T_\varepsilon \to \infty$ as $\varepsilon \to 0$ and  dynamics \eqref{eqn:gfkerbd} with initial condition $\rho_0$ has a unique positive solution $\rhove \in  C^1([0, T_\varepsilon], L^1(\T^d))$. 
\end{theorem}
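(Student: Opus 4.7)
The plan is to combine the local well-posedness of Lemma \ref{lem:loc-existBDe} with the a priori Lipschitz and $L^\infty$ bounds of Lemmas \ref{lem:upperbdgradw} and \ref{lem:upperbdrhovet}, and then extend the solution up to $T_\varepsilon$ via a standard continuation argument. Here $T_\varepsilon$ is the time constructed in the proof of Lemma \ref{lem:upperbdgradw}, which satisfies $T_\varepsilon \to \infty$ as $\varepsilon \to 0$.

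First I would apply Lemma \ref{lem:loc-existBDe}, with $C$ replaced by $2C$, to the initial datum $\rho_0 \in C^1(\T^d)\cap \mathcal P_C \subset \mathcal P_{(2C)/2}$. This yields $T_0>0$ and a unique solution $\rhove \in C^1([0,T_0], L^1(\T^d))$ with $\rhove_t \in \mathcal P_{2C}\cap C^1(\T^d)$ for every $t \in [0,T_0]$, and with $\rhove_t$ pointwise positive (since $\rhove_t \geq 1/(2C)>0$). Define the maximal existence time
\[
T^\ast \;=\; \sup\bigl\{ T \in (0, T_\varepsilon]\,:\, \rhove \in C^1([0,T], L^1(\T^d))\cap C^0([0,T], C^1(\T^d)),\ \rhove_t>0 \text{ on } \T^d \text{ for } t\in[0,T] \bigr\}.
\]
The goal is to prove $T^\ast = T_\varepsilon$.

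Suppose for contradiction that $T^\ast < T_\varepsilon$. Then on $[0,T^\ast)$ Lemma \ref{lem:upperbdgradw} gives the uniform bound $\|\nabla \log \rhove_t\|_{L^\infty(\T^d)} \leq L^\ast$, where $L^\ast$ is the right-hand side of \eqref{eqn:derivbdwvet} at $t=T^\ast$; this is finite precisely because $T^\ast<T_\varepsilon$. Feeding $L^\ast$ into Lemma \ref{lem:upperbdrhovet} (applied on each subinterval $[0,t]$, $t<T^\ast$) yields explicit constants $0<a\leq b<\infty$, depending only on $\varepsilon$, $T^\ast$, $\pi$ and $\rho_0$, such that $a \leq \rhove_t(x) \leq b$ on $\T^d\times[0,T^\ast)$. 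Setting $C' := \max\{b,\,1/a\}+1$, we obtain $\rhove_t \in \mathcal P_{C'}$ uniformly for $t \in [0,T^\ast)$. By Lemma \ref{lem:loc-existBDe} applied with constant $2C'$, any initial datum in $\mathcal P_{C'}=\mathcal P_{(2C')/2}$ produces a solution on an interval of length at least some $\tau=\tau(C')>0$ independent of the datum. Pick $t_0 \in (T^\ast-\tau/2, T^\ast)$; starting from $\rhove_{t_0} \in \mathcal P_{C'}\cap C^1(\T^d)$, Lemma \ref{lem:loc-existBDe} produces an extension on $[t_0, t_0+\tau]$, which by the $L^1$-Lipschitz uniqueness agrees with $\rhove$ on $[t_0, T^\ast)$ and strictly extends it past $T^\ast$, contradicting the definition of $T^\ast$.

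The main subtlety is the mutual dependence of the a priori estimates: Lemma \ref{lem:upperbdgradw} applies only while the solution exists and stays positive, and Lemma \ref{lem:upperbdrhovet} requires the Lipschitz bound produced by the former. The contradiction argument above resolves this circularity because the relevant bounds are automatically finite on any closed interval strictly inside $[0, T_\varepsilon)$, and the local existence time $\tau$ from Lemma \ref{lem:loc-existBDe} depends only on the ambient constant $C'$, not on the current value of $t$. Uniqueness on $[0,T_\varepsilon]$ is obtained by concatenating local uniqueness statements across a finite cover of $[0,T_\varepsilon]$ by intervals of length at most $\tau$, and the $C^1(\T^d)$ regularity of $\rhove_t$ is preserved at each step since Lemma \ref{lem:loc-existBDe} delivers $C^1$ spatial regularity from $C^1$ initial data.
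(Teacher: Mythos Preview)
Your proof is correct and follows essentially the same continuation argument as the paper: combine the local existence from Lemma~\ref{lem:loc-existBDe} with the a priori bounds of Lemmas~\ref{lem:upperbdgradw} and~\ref{lem:upperbdrhovet}, and extend past any putative maximal time $T^\ast<T_\varepsilon$ by restarting the local existence from a time close to $T^\ast$. Your write-up is simply more explicit than the paper's about the interplay between the Lipschitz and $L^\infty$ estimates and about how the local existence time depends only on the ambient $\mathcal P_{C'}$-bound, which is exactly the content the paper sketches in one sentence.
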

\begin{proof}
Let $\varepsilon>0$. By local well-posedess of Lemma \ref{lem:loc-existBDe} we know that a unique positive solution exists on some time interval $[0,T_0)$. 
Consider the time $T_\varepsilon$ defined in the proof of  Lemma \ref{lem:upperbdgradw}. We claim that the solution of \eqref{eqn:gfkerbd} exists until at least on $[0,T_\varepsilon)$.
 Namely if the maximal time of existence, $T$ is less than $T_\varepsilon$, then by
by Lemma \ref{lem:upperbdrhovet} $\rhove$ is uniformly bounded from below and above by positive constants. Thus there exists $\tilde C>1$ such that $\rhove \in \mathcal P_{\tilde C}$ for all $t \in [0,T)$. By applying the local existence \ref{lem:loc-existBDe} starting at time $\tau$ close enough to $T$ we can extend the solution beyond $T$ and obtain contradiction. 
\end{proof}

In order to study the convergence of \eqref{eqn:gfkerbd} to \eqref{eqn:purebd} as $\varepsilon\to 0$, we will rely on their gradient flow structure, which we investigate next.

\begin{definition} \label{def:convex}
Let $\mathcal{G}$ be a functional defined on $\mathcal{P}_C$. We say $\mathcal{G}(\rho)$ is $\lambda$-geodesically convex in $\mathcal{P}_C$ with respect to $d_{SH}$ if, for any $\rho_0,\rho_1\in \mathcal{P}_C$, let $(\rho_t)_{t\in[0,1]}$ be the $d_{SH}$-geodesics connecting $\rho_0$ to $\rho_1$, then \begin{equation}\label{eqn:dSHsemiconv}
    \mathcal{G}(\rho_1) -\mathcal{G}(\rho_0)- \frac{\ud}{\ud t} \mathcal{G}(\rho_t)\Big|_{t=0} \ge \frac{\lambda}{2}d^2_{SH}(\rho_0,\rho_1).
\end{equation} 
\end{definition}
Note that the above definition does not require $\mathcal{P}_C$ itself to be a geodesically convex set of $d_{SH}$. As long as $\rho$ is bounded above and below away from 0, both the relative entropy $\calF$ and regularized entropy $\fve$ are geodesically semiconvex with respect to $d_{SH}$, which is why we introduced the restricted submanifold $\mathcal{P}_C$. For general results regarding displacement convexity with respect to Hellinger-Kantorovich distance, we refer the readers to  \cite{liero2022fine}.
\begin{lemma}\label{lem:geosemiconvex}
Let $\varepsilon>0$ and $\Kve$ satisfy Assumption \ref{assump:kernel}. For any $C>1$, if $\pi \in \mathcal{P}_C$, then both $\calF(\rho) = \int_{\mathbb{T}^d} \rho\log \frac{\rho}{\pi}$ and $\fve(\rho)$ are $\lambda$-geodesically convex with respect to $d_{SH}$ in $\mathcal{P}_C$, for some $\lambda = \lambda(C,\varepsilon)\in \R$.
\end{lemma}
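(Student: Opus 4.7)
The plan is to exploit the explicit form of $d_{SH}$-geodesics between probability measures via the change of variables $u := \sqrt{\rho}$, which identifies densities with the positive part of the unit sphere of $L^2(\T^d)$. Computing from \eqref{eqn:ubHellinger}--\eqref{eqn:conicdSH} gives $d_{SH}(\rho_0,\rho_1) = 2\arccos\int u_0 u_1 =: 2\theta$, and by \cite[Theorem 2.7]{laschos2019geometric} the $d_{SH}$-geodesic has the great-circle form $u_t = \frac{\sin((1-t)\theta)}{\sin\theta}u_0 + \frac{\sin(t\theta)}{\sin\theta}u_1$, satisfying $\ddot u_t = -\theta^2 u_t$, $\int u_t\dot u_t = 0$, and $\int \dot u_t^2 = \theta^2$ constant in $t$. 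Since $u_0, u_1 \in [C^{-1/2}, C^{1/2}]$, one has $\cos\theta = \int u_0 u_1 \geq 1/C$, so $\theta$ is bounded away from $\pi/2$ and both geodesic coefficients are bounded above by a constant depending only on $C$. Consequently $\rho_t$ stays in some $\mathcal{P}_{C'}$ with $C' = C'(C) > 1$ and, together with $\pi \in \mathcal{P}_C$, the ratio $\rho_t/\pi$ is uniformly pinched in a fixed compact subinterval of $(0,\infty)$.

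For $\calF$, I would use $\int u_t \dot u_t = 0$ to reduce the first derivative to $\frac{d}{dt}\calF(\rho_t) = 2\int u_t \dot u_t \log(\rho_t/\pi)$, and then $\ddot u_t = -\theta^2 u_t$ together with $\dot\rho_t/\rho_t = 2\dot u_t/u_t$ to obtain
\[ \frac{d^2}{dt^2}\calF(\rho_t) = -2\theta^2 \calF(\rho_t) + 2\int \dot u_t^2 \bigl[\log(\rho_t/\pi) + 2\bigr]. \]
The first paragraph bounds both $|\calF(\rho_t)|$ and $|\log(\rho_t/\pi)+2|$ in $L^\infty$ by a constant $M = M(C,\pi)$, yielding $\frac{d^2}{dt^2}\calF(\rho_t) \geq -4M\theta^2 = -M\,d_{SH}^2(\rho_0,\rho_1)$; a Taylor expansion at $t=0$ then converts this into the one-sided inequality \eqref{eqn:dSHsemiconv} with $\lambda = -2M$.

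For $\fve(\rho) = \int \rho \log(\Kve*\rho) + \int \rho V + \mathrm{const}$, the linear term $\int \rho_t V$ is handled exactly as above. For the nonlocal term I would set $\phi_t := \Kve*\rho_t$ and $\psi_t := \dot\rho_t$, and differentiate the first variation $g_t = \log\phi_t + \Kve*(\rho_t/\phi_t)$ from \eqref{eqn:funcderivkl} twice, arriving after some algebra at
\[ \frac{d^2}{dt^2}\int \rho_t \log \phi_t = \int \ddot\rho_t\, g_t + 2\int \frac{(\Kve*\psi_t)\psi_t}{\phi_t} - \int \frac{(\Kve*\psi_t)^2 \rho_t}{\phi_t^2}. \]
Since $\phi_t$ inherits the two-sided bounds on $\rho_t$, $g_t$ is $L^\infty$-bounded and the denominators are bounded away from zero. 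Combined with $\ddot\rho_t = 2\dot u_t^2 - 2\theta^2 \rho_t$, $\int \psi_t^2 \leq 4C'\theta^2$, and Young's convolution inequality $\|\Kve*\psi_t\|_{L^2} \leq \|\psi_t\|_{L^2}$, each of the three terms is of order $\theta^2$ with a prefactor depending on $C, \varepsilon, \pi$, producing $\lambda$-geodesic convexity of $\fve$. The main obstacle I anticipate is the careful bookkeeping of these cross terms; the uniform positivity of $\phi_t$ supplied by the first paragraph is essential, whereas the positive-definiteness of $\Kve$ from Assumption \ref{assump:kernel}(i) does not appear to be needed at this stage, and no sign of $\lambda$ is claimed.
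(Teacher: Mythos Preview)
Your proof is correct and follows the same overall strategy as the paper: compute the second derivative of the energy along the explicit $d_{SH}$-geodesic and bound it from below by a negative multiple of $d_{SH}^2(\rho_0,\rho_1)$. The paper parametrizes the geodesic via the cone formula $\rho_t=\tilde\rho_{\beta_t}/r_{\beta_t}$ from \cite[Theorem~2.7]{laschos2019geometric} and then pushes the chain rule through the reparametrization $\beta_t$ and the normalization $r_t$, producing the list of auxiliary estimates \eqref{eqn:dSHdscbds} in order to obtain \eqref{eqn:dSHcurvebd}; your direct great-circle parametrization on the $L^2$-sphere gives $\ddot u_t=-\theta^2 u_t$ and $\int\dot u_t^2=\theta^2$ immediately, which shortcuts that bookkeeping. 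For the cross terms in $\fve$, the paper invokes the factorization $\Kve=\xi_\varepsilon*\xi_\varepsilon$ from Assumption~\ref{assump:kernel}(i) together with Jensen's inequality, whereas your use of Young's inequality $\|\Kve*\psi_t\|_{L^2}\le\|\psi_t\|_{L^2}$ reaches the same bound without positive-definiteness, confirming your closing remark. In both arguments the resulting $\lambda$ actually depends only on $C$ (through the $L^\infty$ bounds on $\rho_t$, $\phi_t$ and $V$) and not on $\varepsilon$; the statement of the lemma is simply conservative on this point.
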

\begin{proof}
Consider $\rho_0,\rho_1 \in \mathcal{P}_C$ and $\rho_0\neq \rho_1$. Let us recall the $d_{SH}$-geodesics from $\rho_0$ to $\rho_1$  given in the expression in \cite[Lemma 2.7]{laschos2019geometric}:
    \begin{equation}\label{eqn:dSHgeodesic}
        \rho_t = \frac{\tilde\rho_{\beta_t}}{r_{\beta_t}}, \, \textrm{ with }\beta_t = \frac{\sin\big(td_{SH}(\rho_0,\rho_1)/2\big)}{\sin\big(td_{SH}(\rho_0,\rho_1)/2\big)+\sin\big((1-t)d_{SH}(\rho_0,\rho_1)/2\big)} \textrm{ and } r_t = \int_{\T^d} \tilde\rho_t,
    \end{equation}
and $\tilde\rho_t$ is the $d_H$-geodesics given in explicit form \eqref{eqn:geodesicdH}. Since $d_H(\rho_0,\rho_1)\le 2\sqrt{2}$, we can obtain from \eqref{eqn:conicdSH} that $d_{SH}(\rho_0,\rho_1) \le \pi$,  hence $\beta_t$ is well-defined and increases from 0 to 1. We may also obtain
\begin{equation}\label{eqn:bdrt}
    r_t = \int_{\T^d} \big(t\sqrt{\rho_1}+(1-t)\sqrt{\rho_0}\big)^2=1-\frac{t(1-t)}{4}d_H^2(\rho_0,\rho_1) \in \left[\frac{1}{2},1 \right],
\end{equation}which indicates $ \rho_t\in \mathcal{P}_{2C}$ 
for all $t\in[0,1]$. Thus, one can explicitly calculate that for $\calF(\rho) = \KL(\rho|\pi)$,
\begin{align*}\frac{\ud^2}{\ud t^2} \calF(\rho_t) & = \frac{\ud^2}{\ud t^2}\int_{\T^d} \rho_t \log \frac{\rho_t}{\pi} = \frac{\ud}{\ud t}\int_{\T^d}\partial_t\rho_t (\log\frac{\rho_t}{\pi}+1) = \int_{\T^d} \Big(\log \frac{\rho_t}{\pi}\frac{\ud^2}{\ud t^2}\rho_t + \frac{1}{\rho_t}(\frac{\ud}{\ud t}\rho_t)^2\Big).\end{align*}
The contribution from the second term is already non-negative; therefore in view of the pointwise bounds of $\rho_t$ and $\pi$, we only need to prove 
\begin{equation}\label{eqn:dSHcurvebd}
\int_{\T^d}\Big|\frac{\ud^2}{\ud t^2}\rho_t \Big|\lesssim d_{SH}^2(\rho_0,\rho_1).
\end{equation}
Taking second derivative on \eqref{eqn:dSHgeodesic}, using chain rule and quotient rule, we obtain
\begin{equation*}
    \frac{\ud^2}{\ud t^2}\rho_t = \Big(\frac{\tilde \rho_s''}{r_s} - \frac{2\tilde\rho_s' r_s'}{r_s^2} - \frac{\tilde\rho_sr''_s}{r_s^2}+\frac{2\tilde\rho_s (r_s')^2}{r_s^3} \Big)\Big|_{s=\beta_t} (\beta'_t)^2 + \Big(\frac{\tilde \rho_s'}{r_s}  - \frac{\tilde\rho_sr'_s}{r_s^2}\Big)\Big|_{s=\beta_t} \beta''_t.
\end{equation*}
We complete the proof of \eqref{eqn:dSHcurvebd} by combining the following facts:
\begin{align*}
     \int \! \tilde\rho_s &= r_s\in \left[\frac{1}{2},1 \right], \\
     \int \!|\tilde\rho_s''| & = |r_s''|= 2 \int (\sqrt{\rho_1}-\sqrt{\rho_0})^2 \le \frac{1}{2}d_{SH}^2(\rho_0,\rho_1), \\ 
      \int \!|\tilde \rho'_s| & = 2 \int \sqrt{\tilde\rho_s}|\sqrt{\rho_1}-\sqrt{\rho_0}| \le 2\Big(\int \tilde\rho_s\Big)^\frac{1}{2}\Big(\int (\sqrt{\rho_1}-\sqrt{\rho_0})^2\Big)^\frac{1}{2} \le  \sqrt{r_s} d_{SH}(\rho_0,\rho_1), \\
       |r'_s| & = \left|\frac{2s-1}{4}\right| d_H^2(\rho_0,\rho_1) \le \frac{1}{4}d_{SH}^2 (\rho_0,\rho_1), \\
        \beta_t' & = \frac{d_{SH}(\rho_0,\rho_1)\sin\big(d_{SH}(\rho_0,\rho_1)/2\big)}{2\Big(\sin\big(td_{SH}(\rho_0,\rho_1)/2\big)+\sin\big((1-t)d_{SH}(\rho_0,\rho_1)/2\big)\Big)^2}\, \in [0,\frac{\pi}{2}],  \\ 
       |\beta_t''| &  =\frac{d_{SH}^2(\rho_0,\rho_1)\sin\big(d_{SH}(\rho_0,\rho_1)/2\big)\Big|\cos\big(td_{SH}(\rho_0,\rho_1)/2\big)-\cos\big((1-t)d_{SH}(\rho_0,\rho_1)/2\big)\Big|}{2\Big(\sin\big(td_{SH}(\rho_0,\rho_1)/2\big)+\sin\big((1-t)d_{SH}(\rho_0,\rho_1)/2\big)\Big)^3} \!\!\\
         &  \le Cd_{SH}^2(\rho_0,\rho_1).   
\stepcounter{equation} \tag{\theequation}\label{eqn:dSHdscbds}
\end{align*}
\smallskip

We now turn our attention to the regularized energy $\fve$. Taking second derivative, following the same calculation as above,  one has 
\begin{align*}
    \frac{\ud^2}{\ud t^2} \int_{\T^d} \fve(\rho_t) & = \int_{\T^d} \log \frac{\Kve*\rho_t}{\pi}\frac{\ud^2}{\ud t^2}\rho_t + \int_{\T^d} \frac{\rho_t}{\Kve*\rho_t} \Kve* \frac{\ud^2 }{\ud t^2}\rho_t + 2\int_{\T^d} \frac{\frac{\ud \rho_t}{\ud t}}{\Kve*\rho_t} \Kve* \frac{\ud \rho_t}{\ud t} \\ 
    & \qquad - \int_{\T^d} \frac{\rho_t}{(\Kve*\rho_t)^2} \left(\Kve*\frac{\ud\rho_t}{\ud t} \right)^2 \stepcounter{equation} \tag{\theequation} \label{eqn:curvfve}  \\ 
    & \ge -(4\log 2C+C^2) \int_{\T^d} \Big|\frac{\ud^2}{\ud^2 t}\rho_t \Big|-2C \int_{\T^d} \frac{\ud \rho_t}{\ud t} \Kve* \frac{\ud \rho_t}{\ud t}- C^3\int_{\T^d} \left(\Kve*\frac{\ud\rho_t}{\ud t} \right)^2 \\ 
    & \ge  -(4\log 2C+C^2) \int_{\T^d} \Big|\frac{\ud^2}{\ud^2 t}\rho_t\Big| -(2C+C^3) \int_{\T^d} \left(\frac{\ud \rho_t}{\ud t} \right)^2.
\end{align*}
Here the second term on the right side of the second line can be bounded using Assumption \ref{assump:kernel} on $\Kve$ and Jensen's inequality: 
\begin{align*}
    \int_{\T^d} \frac{\ud \rho_t}{\ud t} \Kve* \frac{\ud \rho_t}{\ud t} = \int_{\T^d}  \left(\xieps* \frac{\ud \rho_t}{\ud t}\right)^2\le \int_{\T^d}  \xieps* \left(\frac{\ud \rho_t}{\ud t}\right)^2 = \int_{\T^d} \left(\frac{\ud \rho_t}{\ud t}\right)^2.
\end{align*}
The third term  on the right side of the second line can be treated in the identical way. In view of \eqref{eqn:dSHcurvebd}, it remains to show
\begin{equation*}
    \int_{\T^d} \left(\frac{\ud \rho_t}{\ud t} \right)^2 \lesssim d_{SH}^2(\rho_0,\rho_1).
\end{equation*}
which is true since $\rho_0,\rho_1\in \mathcal{P}_C$ and we may apply the bounds in \eqref{eqn:dSHdscbds} to
\begin{equation*}
    \frac{\ud}{\ud t}\rho_t =\Big(\frac{\tilde \rho_s'}{r_s}  - \frac{\tilde\rho_sr'_s}{r_s^2}\Big)\Big|_{s=\beta_t} \beta'_t.
\end{equation*}
\end{proof}

We now introduce the elements needed to consider \eqref{eqn:gfkerbd} as  $d_{SH}$ gradient flows. The space of all probability densities on $\T^d$ equipped with $d_{SH}$ is a complete Riemannian manifold with corners. The explicit formulas for geodesics \eqref{eqn:dSHgeodesic} ensure that for all $\rho_0, \rho_1 \in \mathcal{P}_C$ and all $t \in [0,1]$ the geodesics $\rho_t \in \mathcal{P}_{2C}$. 
The tangent spaces are already identified in the earlier work \cite{gallouet2017jko}:
    For $\rho \in \mathcal{P}_C$, the tangent space at $\rho$ with respect to $d_{SH}$ can be identified with 
\begin{equation} \label{eq:tanSH}
T_{SH, \rho} = L^2_0(\rho):=\left\{\phi\in L^2(\rho) \::\: \int_{\T^d}\phi \ud \rho=0\right\}.
\end{equation}

We now compute the geometric logarithm (inverse of the exponential map) on the space of probability measures endowed with $d_{SH}$ geometry and a point $\rho\in\mathcal{P}_C$ for some $C$. In other words we compute the tangent vector to the unit-time geodesic connecting two measures. 
\begin{lemma} 
For $\rho,\mu\in \mathcal{P}_C$, we have
    \begin{equation} \label{eqn:logdSH}
        \ln_\rho^{d_{SH}}\mu= \frac{d_{SH}(\rho,\mu)/2}{\sin\big(d_{SH}(\rho,\mu)/2\big)} 
          \left[2 \left(\sqrt{\frac{\mu}{\rho}}-1\right) + \frac{d_{SH}(\rho,\mu)^2}{4} \right]  .
    \end{equation}
\end{lemma}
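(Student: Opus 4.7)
The plan is to differentiate the explicit unit-time $d_{SH}$-geodesic $\rho_t$ from \eqref{eqn:dSHgeodesic} at $t=0$ and identify its initial velocity with an element of the tangent space \eqref{eq:tanSH}. Using the Benamou--Brenier parametrization $\partial_t \rho_t = \rho_t\,\phi_t$ with $\phi_t \in L^2_0(\rho_t)$ described in the footnote, the logarithm map is precisely $\ln_\rho^{d_{SH}}\mu = \phi_0 = (\partial_t\rho_t/\rho)|_{t=0}$ along the geodesic with $\rho_0=\rho$, $\rho_1=\mu$.

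Writing $D = d_{SH}(\rho,\mu)$ and $\rho_t = \tilde\rho_{\beta_t}/r_{\beta_t}$ with the Hellinger geodesic $\sqrt{\tilde\rho_s}=(1-s)\sqrt{\rho}+s\sqrt{\mu}$ from \eqref{eqn:geodesicdH}, $r_s=\int\tilde\rho_s$, and the reparametrization $\beta_t$ of \eqref{eqn:dSHgeodesic}, the chain rule gives $\partial_t\rho_t|_{t=0} = \beta'_t(0)\,\partial_s(\tilde\rho_s/r_s)|_{s=0}$, and I would compute the two factors separately. Direct differentiation of $\beta_t$ at $t=0$ using $\beta_0=0$ gives $\beta'_t(0) = (D/2)/\sin(D/2)$. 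Expanding the square produces $\partial_s\tilde\rho_s|_{s=0}=2(\sqrt{\rho\mu}-\rho)$, so that $\partial_s r_s|_{s=0} = 2\int\sqrt{\rho\mu}-2 = -d_H^2/4$, the last equality being the Hellinger identity $d_H^2=8(1-\int\sqrt{\rho\mu})$ coming from \eqref{eqn:ubHellinger} together with the probability mass constraint. The quotient rule with $r_0=1$ and $\tilde\rho_0=\rho$ then gives
\begin{equation*}
\partial_s(\tilde\rho_s/r_s)\big|_{s=0} = \rho\,\Bigl[2(\sqrt{\mu/\rho}-1) + d_H^2/4\Bigr],
\end{equation*}
and dividing by $\rho$ and multiplying by $\beta'_t(0)$ yields $\phi_0 = \tfrac{D/2}{\sin(D/2)}\,\bigl[2(\sqrt{\mu/\rho}-1) + d_H^2/4\bigr]$. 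To finish, it remains to rewrite $d_H^2/4$ in the form appearing in \eqref{eqn:logdSH} by means of the relation $d_H^2 = 16\sin^2(D/4)$ coming from \eqref{eqn:conicdSH}, combined with half-angle identities.

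I would perform two sanity checks. First, $\phi_0 \in L^2_0(\rho)$ reduces to $2(\int\sqrt{\rho\mu}-1) + d_H^2/4 = 0$, which is exactly the Hellinger identity above. Second, $\|\phi_0\|_{L^2(\rho)} = D$ reduces, after expanding the square and using the same identities, to the trigonometric relation $16\sin^2(D/4)\cos^2(D/4) = 4\sin^2(D/2)$. Together these confirm both that the expression lies in the correct tangent space and that it has the correct length, which is exactly what characterizes the logarithm along a constant-speed unit-time geodesic.

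The main obstacle, and the only delicate point, is the last algebraic step translating $d_H^2/4$ into the form appearing inside the bracket of \eqref{eqn:logdSH}: the natural computation produces $d_H^2/4 = 4\sin^2(d_{SH}/4)$, which agrees with $d_{SH}^2/4$ only to leading order as $d_{SH}\to 0$. Matching \eqref{eqn:logdSH} therefore hinges on keeping careful track of the correspondence between $d_H$ and $d_{SH}$ through \eqref{eqn:conicdSH}, and the bracketed term should be interpreted via this identity to be consistent with the explicit Hellinger-geodesic derivation above.
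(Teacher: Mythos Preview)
Your approach is exactly the paper's: differentiate the explicit $d_{SH}$-geodesic \eqref{eqn:dSHgeodesic} at $t=0$ via the chain rule, using $\ln_\rho^{d_{SH}}\mu = (\partial_t\rho_t/\rho_t)|_{t=0}$, and compute $\beta_t'(0)$, $\tilde\rho_s'|_{s=0}$, $r_s'|_{s=0}$ separately. The paper's proof is in fact just the two-line calculation you outline, with no additional ingredients.

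Your suspicion in the final paragraph is warranted and is not a gap in your argument but a typo in the stated formula. The honest output of the computation is
\[
\ln_\rho^{d_{SH}}\mu \;=\; \frac{d_{SH}(\rho,\mu)/2}{\sin\bigl(d_{SH}(\rho,\mu)/2\bigr)}\left[2\Bigl(\sqrt{\tfrac{\mu}{\rho}}-1\Bigr)+\frac{d_H^2(\rho,\mu)}{4}\right],
\]
since $r_0'=-d_H^2/4$ by \eqref{eqn:bdrt} (this is also what appears in \eqref{eqn:dSHdscbds}). Your two sanity checks confirm this: with $d_H^2/4$ the expression has zero $\rho$-mean and $L^2(\rho)$-norm equal to $d_{SH}$, whereas with $d_{SH}^2/4$ neither holds unless $\rho=\mu$. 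The discrepancy is immaterial for the only place the lemma is used (the proof of Lemma~\ref{lem:struppgr}), because there the constant term is absorbed into an $o(d_{SH})$ remainder; but as a standalone identity the bracket should read $d_H^2/4$, equivalently $4\sin^2\bigl(d_{SH}/4\bigr)$. You should not try to ``translate'' further---there is no trigonometric identity turning $4\sin^2(D/4)$ into $D^2/4$.
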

\begin{proof}
    Let $\{\rho_t\}_{t\in[0,1]}$ be the $d_{SH}$-geodesics connecting from $\rho$ to $\mu$. We refer the readers to the expression in \eqref{eqn:dSHgeodesic},
    then apply the chain rule and we obtain
    \begin{align*}
         \ln_\rho^{d_{SH}}\mu & = \frac{\frac{\ud}{\ud t}\rho_t}{\rho_t}\Big|_{t=0} = \frac{\tilde\rho_0' r_0-\tilde\rho_0 r'_0}{r_0^2\tilde\rho_0}\beta'_0 \\ & = \frac{d_{SH}(\rho,\mu)/2}{\sin\big(d_{SH}(\rho,\mu)/2\big))}\left(2\left(\sqrt{\frac{\mu}{\rho}}-1\right) + \frac{d_{SH}^2(\rho,\mu)}{4}\right). \stepcounter{equation} \tag{\theequation} \label{eqn:lncalc}
    \end{align*}
We note that due to the remark after \eqref{eqn:conicdSH}, $\sin\big(d_{SH}(\rho,\mu)/2\big) >0$.
\end{proof}

Note that the tangent spaces are Hilbert spaces, which allows us to define the subdifferentials as the classical Fr\'echet subdifferentials:
\begin{definition}\label{def:subdiff}(Subdifferential)
    Let $\mathcal{G}: \mathcal P(\T^d) \to \R$ be proper and lower semicontinuous with respect to $d_{SH}$. Assume $\rho>0$ on $\T^d$,  $\mathcal{G}(\rho)<\infty$, and $\zeta \in L^2_0(\rho)$. We say that $\zeta$ is in the subdifferential of $\mathcal{G}$ at $\rho$, and write $\zeta \in \partial_{SH} \mathcal{G}(\rho)$, if for any $\phi \in L^2_0(\rho)$ such that $(1+\phi)\rho \geq 0$,
    \begin{equation}\label{eqn:dSHsubdiff}
        \mathcal{G}((1+\phi)\rho)- \mathcal{G}(\rho) \ge \int_{\T^d} \zeta  \phi  \, \rho \, dx
        + o(\|\phi \|_{L^2(\rho)}).
    \end{equation}
We note that $\phi$ belongs is the  Fr\'echet differential if the inequality above is replaced by equality. 
\end{definition} 
We  note that $o(\|\phi \|_{L^2(\rho)}) = o(d_{SH}(\rho, (1 + \phi) \rho))$.


\begin{lemma}\label{lem:struppgr}
    Consider $\rho \in \mathcal{P}_C$ for some $C >1$. Suppose $\mathcal{G} : \mathcal{P} \to \R$ is proper and lower semicontinuous.
    Assume that there exists $\zeta \in L^\infty(\T^d)$ such that for all $h \in L^2(\rho)$ which is essentially bounded from below (i.e. $h_- \in L^\infty$), the first variation of $\mathcal{G}$ in direction $h$ exists and is of the form $\frac{\delta \mathcal{G}}{\delta \rho}\Big|_\rho [h] = \int \zeta h dx $. Furthermore assume  that $\mathcal{G}$ is $d_{SH}$ $\lambda$-geodesically convex in $\mathcal{P}_C$ for some $\lambda\in \R$. Then 
    \begin{equation}\label{eqn:subdiffdelG}
          \partial_{SH} \mathcal{G}(\rho) = \left\{\zeta  - \int_{\T^d} \zeta \rho\right\}.
    \end{equation}
\end{lemma}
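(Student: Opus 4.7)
The plan is to show that $\tilde\zeta := \zeta - \int\zeta\rho$ is the unique element of $\partial_{SH}\mathcal{G}(\rho)$. Note that $\tilde\zeta \in L^\infty \cap L^2_0(\rho)$ is a valid candidate in $T_{SH,\rho}$, and that the two inclusions $\{\tilde\zeta\} \subseteq \partial_{SH}\mathcal{G}(\rho) \subseteq \{\tilde\zeta\}$ rely respectively on the $\lambda$-geodesic convexity and on the first-variation hypothesis.

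For the inclusion $\tilde\zeta \in \partial_{SH}\mathcal{G}(\rho)$, I first verify the subdifferential inequality \eqref{eqn:dSHsubdiff} on bounded test functions $\phi \in L^\infty \cap L^2_0(\rho)$ with $(1+\phi)\rho \geq 0$. Setting $\mu = (1+\phi)\rho$ and letting $(\rho_t)_{t\in[0,1]}$ be the explicit $d_{SH}$-geodesic \eqref{eqn:dSHgeodesic}, the convexity hypothesis \eqref{eqn:dSHsemiconv} yields
\[ \mathcal{G}(\mu) - \mathcal{G}(\rho) \geq \left.\frac{d}{dt}\right|_{t=0^+}\mathcal{G}(\rho_t) + \frac{\lambda}{2}d_{SH}^2(\rho,\mu). \]
Since $\ln_\rho^{d_{SH}}\mu$ is bounded below (as $\sqrt{\mu/\rho}-1 \geq -1$) and $\rho \leq C$, the initial velocity $h := \rho\cdot\ln_\rho^{d_{SH}}\mu$ is bounded below, so the first-variation hypothesis applies. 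Comparing the geodesic to the linear path $\rho + th$ (which differs by $O(t^2)$ in $L^\infty$, as can be read off \eqref{eqn:dSHgeodesic}) and using the local Lipschitz continuity of $\mathcal{G}$ on $\mathcal{P}_C$ inherited from $\lambda$-convexity together with $\zeta\in L^\infty$, I identify
\[ \left.\frac{d}{dt}\right|_{t=0^+}\mathcal{G}(\rho_t) = \int_{\T^d}\zeta\,\rho\,\ln_\rho^{d_{SH}}\mu\, dx = \int_{\T^d}\tilde\zeta\,\rho\,\ln_\rho^{d_{SH}}\mu\, dx, \]
the second equality because $\ln_\rho^{d_{SH}}\mu \in L^2_0(\rho)$. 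A Taylor expansion of \eqref{eqn:logdSH}, combined with the identity $d_H^2(\rho,\mu) = 4\int(\sqrt{1+\phi}-1)^2\rho\, dx$ and the equivalence \eqref{eqn:dHdSHequiv}, yields $\|\ln_\rho^{d_{SH}}\mu - \phi\|_{L^2(\rho)} = O(\|\phi\|_\infty\|\phi\|_{L^2(\rho)})$, so
\[ \int\tilde\zeta\,\rho\,\ln_\rho^{d_{SH}}\mu\, dx = \int\tilde\zeta\phi\rho\, dx + o(\|\phi\|_{L^2(\rho)}), \]
while $\tfrac{\lambda}{2}d_{SH}^2(\rho,\mu) = O(\|\phi\|_{L^2(\rho)}^2) = o(\|\phi\|_{L^2(\rho)})$. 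This delivers \eqref{eqn:dSHsubdiff} for bounded test functions; a truncation/density argument combined with $L^2$-continuity of $\mathcal{G}$ on $\mathcal{P}_C$ extends the inequality to all admissible $\phi \in L^2_0(\rho)$.

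For uniqueness, let $\zeta_1 \in \partial_{SH}\mathcal{G}(\rho)$ be arbitrary. For bounded $\psi \in L^2_0(\rho)$ and small $\varepsilon>0$, testing \eqref{eqn:dSHsubdiff} against $\phi=\varepsilon\psi$ and using the first-variation hypothesis in direction $\psi\rho$ (bounded below since $\rho \leq C$ and $\psi$ is bounded) gives
\[ \varepsilon\int\zeta\psi\rho\, dx + o(\varepsilon) = \mathcal{G}((1+\varepsilon\psi)\rho) - \mathcal{G}(\rho) \geq \varepsilon\int\zeta_1\psi\rho\, dx + o(\varepsilon). \]
Dividing by $\varepsilon$, sending $\varepsilon\to 0^+$, and applying the resulting inequality to both $\psi$ and $-\psi$ yields $\int(\zeta-\zeta_1)\psi\rho\, dx = 0$ for every bounded $\psi \in L^2_0(\rho)$, hence by density for every $\psi\in L^2_0(\rho)$. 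Therefore $\zeta_1 - \zeta$ is $\rho$-a.e.\ constant on $\T^d$, and the normalization $\int\zeta_1\rho\, dx = 0$ forces $\zeta_1 = \zeta - \int\zeta\rho = \tilde\zeta$.

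The main technical obstacle will be rigorously justifying the chain-rule identity for $\frac{d}{dt}|_{t=0^+}\mathcal{G}(\rho_t)$ along the true $d_{SH}$-geodesic, since the first-variation hypothesis is only formulated along linear paths $s \mapsto \rho + sh$. The quadratic discrepancy between $\rho_t$ and $\rho + th$ obtained from \eqref{eqn:dSHgeodesic}, together with the local Lipschitz modulus of $\mathcal{G}$ on $\mathcal{P}_C$ arising from $\lambda$-convexity and boundedness of $\zeta$, is exactly what absorbs this error into the $o(t)$ correction.
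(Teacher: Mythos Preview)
Your proposal is correct and follows essentially the same two-step route as the paper: uniqueness via linearly scaled perturbations $\varepsilon\psi$ and $-\varepsilon\psi$ compared against the first-variation formula, and membership via $\lambda$-geodesic convexity combined with identifying $\frac{d}{dt}\big|_{t=0}\mathcal{G}(\rho_t)$ through the log map \eqref{eqn:logdSH}. The paper dispenses with your intermediate restriction to bounded $\phi$ by exploiting directly the algebraic identity $2(\sqrt{1+\phi}-1)-\phi = -(\sqrt{1+\phi}-1)^2$, which shows that $2\int(\sqrt{1+\phi}-1)\zeta\rho\,dx - \int\zeta\phi\rho\,dx = O(d_H^2(\rho,\mu))$ with constant $\|\zeta\|_{L^\infty}$, so no truncation or density step is needed; your route is slightly longer but equally valid.
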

\begin{proof}
Let us first show that any element of  $ \partial_{SH} \mathcal{G}(\rho)$ must be equal to $\zeta  - \int_{\T^d} \zeta \rho$.  Assume that $\xi \in \partial_{SH} \mathcal{G}(\rho) $, namely it satisfies 
\eqref{eqn:dSHsubdiff}. For $\phi \in  L^2_0(\rho) \cap L^\infty(\T^d)$ and $t >0$ consider $t \phi$ playing the role of $\phi$ in  \eqref{eqn:dSHsubdiff}, dividing by $t$ and taking the limit as $t \to 0$ provides
\[ \int \zeta \phi \rho  \geq \int \xi \phi \rho.\]
Doing the same for $t<0$ yields 
\[ \int \zeta \phi \rho  \leq \int \xi \phi \rho.\]
Hence $\int \zeta \phi \rho = \int \xi \phi \rho$ for all $\phi \in  L^2_0(\rho) \cap L^\infty(\T^d)$. Thus $\xi =\zeta  - \int_{\T^d} \zeta \rho$ since $\xi \in L^2_0(\rho)$.

\smallskip

Let us now show that $\zeta  - \int_{\T^d} \zeta \rho \in \partial_{SH} \mathcal{G}(\rho)$. Let $\{\rho_t\}_{t \in [0,1]}$ be the $d_{SH}$ geodesics connecting $\rho$ and $(1 + \phi) \rho$. Note that, by 
\eqref{eqn:logdSH}, $\frac{d}{dt} \rho_t(0) $ is essentially bounded from below.
Using that $\zeta \in L^\infty$ we obtain
\begin{align*}
\frac{\ud}{\ud t}\Big|_{t=0} \mathcal{G}(\rho_t) & = \int \zeta  \ln_{\rho}^{SH} ((1 + \phi) \rho) \, \zeta \rho dx \\
& = \frac{d_{SH}(\rho,(1 + \phi) \rho)/2}{\sin\big(d_{SH}(\rho,(1 + \phi) \rho)/2\big))}\, 2   \int \left(\sqrt{\frac{(1 + \phi) \rho}{\rho}}-1\right) \, \zeta \rho dx + o(d_{SH}(\rho, (1 + \phi) \rho))\\
& = \int \zeta \phi dx + o(d_{SH}(\rho, (1 + \phi) \rho)) = \int \left( \zeta  - \int_{\T^d} \zeta \rho \right)\phi  dx + o(\|\phi\|_{L^2(\rho)}).
\end{align*}
Combining this with the $\lambda$ convexity of $\mathcal{G}$ implies that $\zeta  - \int_{\T^d} \zeta \rho \in \partial_{SH} \mathcal{G}(\rho)$.
\end{proof}

The above lemma allows us to identify the subgradients of $\calF$ and $\fve$. Since the subgradients are singletons we identify each set with its only element. That is, for $\rho \in \mathcal P_C$,
\begin{align*}
\partial_{SH} \calF(\rho)& =  \log \frac{\rho}{\pi} - \int_{T^d} \log \frac{\rho}{\pi} \, \rho \\
\partial_{SH} \mathcal F_\varepsilon(\rho)& =    \log \left(\frac{\Kve*\rho}{\pi}\right) + \Kve * \left( \frac{ \rho}{\Kve* \rho} \right) - \! \int \log \left(\frac{\Kve*\rho}{\pi}\right) \rho -1
\end{align*}

The last step to rigorously identify \eqref{eqn:purebd} and \eqref{eqn:gfkerbd} as $d_{SH}$-gradient flows is to show that they are \emph{curves of maximal slope.} Let us recall that for a curve $\rho_t$, the metric derivative is given by 
\[|\partial_t \rho_t| = \lim_{s\to t} \frac{d_{SH}(\rho_s,\rho_t)}{|s-t|},\] and the metric slope is defined as \cite[(10.0.9)]{ambrosio2008gradient} \begin{equation} \label{eqn:defmetricslope}|\partial_{SH} \mathcal{G} (\rho)| = \limsup_{d_{SH}(\nu,\rho) \to 0} \frac{\left(\mathcal{G}(\rho)-\mathcal{G}(\nu)\right)_+}{d_{SH}(\nu,\rho)}.\end{equation}
We say that a path $\rho \in AC([0,T],  (\mathcal P_C, d_{SH}))$ is a \emph{gradient flow} solution of \eqref{eqn:genFRgf} if $\partial_t \rho_t = -\rho_t\partial_{SH} \mathcal G(\rho_t)$ for a.e. $t  \in [0,T]$. 
We say that $\rho_t \in AC([0,T],  (\mathcal P_C, d_{SH}))$ is a \emph{curve of maximal slope} for functional $\mathcal G$ if $\frac{d}{dt} \mathcal G(\rho) \leq - \frac12 \left| \partial_t \rho_t\right|^2 - \frac12 \left| \partial_{SH} \mathcal G(\rho) \right|^2$
where $\left| \partial_t \rho_t\right|$  is the $d_{SH}$ metric derivative and 
$\left| \partial_{SH} \mathcal G(\rho) \right|$ is the metric slope. Note that in Definition \ref{def:subdiff} we already require $\partial_{SH} \mathcal{G}(\rho_t) \in L^\infty(\mathbb{T}^d)$, and $\rho_t\in \mathcal{P}_C \subset L^\infty(\mathbb{T}^d)$, hence $\partial_t \rho_t$ is well-defined in the classical sense.

\begin{lemma} \label{lem:metric slope}
Consider $\rho \in \mathcal{P}_C$ for some $C>1$ and suppose that the assumptions of Lemma \ref{lem:struppgr} are satisfied. Assume furthermore that 
$\partial_{SH} \mathcal{G}(\mu)$ is continuous in $L^\infty(\T^d)$ in an $L^\infty$ neighborhood of $\rho$. Then the metric slope satisfies 
\[ |\partial_{SH} \mathcal{G}(\rho) | = 
\|\partial_{SH} \mathcal{G}(\rho)\|_{L^2(\rho)}. \]
\end{lemma}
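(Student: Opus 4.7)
The plan is to establish the two inequalities $|\partial_{SH}\mathcal G(\rho)| \le \|\partial_{SH}\mathcal G(\rho)\|_{L^2(\rho)}$ and $|\partial_{SH}\mathcal G(\rho)| \ge \|\partial_{SH}\mathcal G(\rho)\|_{L^2(\rho)}$ separately, matching the definition \eqref{eqn:defmetricslope}. Throughout I will write $\xi := \partial_{SH}\mathcal G(\rho) = \zeta - \int \zeta \rho \in L^\infty_0(\rho)$ for the unique subgradient identified by Lemma \ref{lem:struppgr}.

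\textbf{Upper bound.} I would apply the $\lambda$-geodesic convexity. Given $\nu$ close to $\rho$ in $d_{SH}$, let $\{\rho_t\}_{t \in [0,1]}$ be the $d_{SH}$-geodesic from $\rho$ to $\nu$ (this stays in $\mathcal P_{2C}$ by the expression \eqref{eqn:dSHgeodesic} and \eqref{eqn:bdrt}, so the convexity argument applies), and set $v_0 := \ln^{d_{SH}}_\rho \nu \in L^2_0(\rho)$. The constant-speed parametrization together with the Benamou-Brenier formulation for $d_{SH}$ yields $\|v_0\|_{L^2(\rho)} = d_{SH}(\rho,\nu)$. Lemma \ref{lem:struppgr}, combined with the chain rule along the geodesic as carried out after \eqref{eqn:lncalc}, gives $\tfrac{d}{dt}\mathcal G(\rho_t)|_{t=0} = \int \xi v_0 \, \rho \ud x$. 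Rearranging the convexity inequality \eqref{eqn:dSHsemiconv} and applying Cauchy-Schwarz,
\[
\mathcal G(\rho) - \mathcal G(\nu) \;\le\; -\int \xi v_0 \, \rho \ud x - \tfrac{\lambda}{2} d_{SH}^2(\rho,\nu) \;\le\; \|\xi\|_{L^2(\rho)} \, d_{SH}(\rho,\nu) + \tfrac{|\lambda|}{2} d_{SH}^2(\rho,\nu).
\]
Dividing by $d_{SH}(\rho,\nu)$ and taking the limsup as $d_{SH}(\nu,\rho)\to 0$ yields $|\partial_{SH}\mathcal G(\rho)| \le \|\xi\|_{L^2(\rho)}$.

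\textbf{Lower bound.} Here I would exhibit a one-parameter family that saturates the upper bound. Since $\xi \in L^\infty$, for all $t\in(0, \|\xi\|_{L^\infty}^{-1})$ the measure $\nu_t := (1 - t \xi)\rho$ is a probability density in $\mathcal P_{2C}$ for small $t$. The direction $h := -\xi\rho$ is essentially bounded, so the first-variation hypothesis of Lemma \ref{lem:struppgr} yields
\[
\mathcal G(\nu_t) - \mathcal G(\rho) \;=\; -t \int \zeta \xi \, \rho \ud x + o(t) \;=\; -t\, \|\xi\|_{L^2(\rho)}^2 + o(t),
\]
where I used $\int \xi \rho = 0$ to replace $\int \zeta \xi \rho$ by $\int \xi^2 \rho$. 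A direct Taylor expansion using \eqref{eqn:ubHellinger} and $\rho \ge 1/C$,
\[
d_H^2(\rho,\nu_t) \;=\; 4 \int \Bigl(\sqrt{1 - t\xi} - 1\Bigr)^2 \rho \ud x \;=\; t^2 \|\xi\|_{L^2(\rho)}^2 + o(t^2),
\]
together with the asymptotic equivalence $d_{SH} \sim d_H$ from \eqref{eqn:dHdSHequiv}, gives $d_{SH}(\rho,\nu_t) = t\|\xi\|_{L^2(\rho)} + o(t)$. Substituting into \eqref{eqn:defmetricslope} shows $|\partial_{SH}\mathcal G(\rho)| \ge \|\xi\|_{L^2(\rho)}$.

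\textbf{Main obstacle.} The cleanest step in both directions is the asymptotic identification $d_{SH}(\rho,(1+\phi)\rho) = \|\phi\|_{L^2(\rho)} + o(\|\phi\|_{L^2(\rho)})$ for small $\phi$, and the fact that $\|v_0\|_{L^2(\rho)} = d_{SH}(\rho,\nu)$ along unit-speed $d_{SH}$-geodesics; these rest on the explicit formula \eqref{eqn:logdSH} for the logarithm map and on the Benamou-Brenier characterization of $d_{SH}$. The continuity assumption on $\partial_{SH}\mathcal G$ in $L^\infty$ enters, if at all, only to ensure that the $o(t)$ remainder in the first-variation expansion is uniform enough to conclude the lower bound; otherwise the argument is essentially a Riemannian gradient computation transcribed into the (spherical) Hellinger setting.
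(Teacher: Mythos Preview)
Your proof is correct and follows the same overall architecture as the paper: the upper bound via $\lambda$-geodesic convexity along $d_{SH}$-geodesics, and the lower bound by testing against the explicit family $\nu_t=(1-t\xi)\rho$.

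The one genuine difference is in the lower bound. The paper bounds $\mathcal G(\rho)-\mathcal G(\mu_t)$ from below using the subdifferential inequality \emph{at the moving point} $\mu_t$, namely $\mathcal G(\rho)-\mathcal G(\mu_t)\ge \int \zeta_t(\rho-\mu_t)+o(d_{SH})$, and then invokes the $L^\infty$-continuity hypothesis $\zeta_t\to\zeta$ to pass to the limit. You instead use the first-variation hypothesis at $\rho$ itself in the direction $h=-\xi\rho$, which directly gives $\mathcal G(\nu_t)-\mathcal G(\rho)=-t\|\xi\|_{L^2(\rho)}^2+o(t)$. Your route is shorter and, as you correctly note, does not need the continuity assumption on $\partial_{SH}\mathcal G$; that hypothesis is only there to make the paper's particular argument go through. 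Your upper bound is also more explicit than the paper's one-line statement, and your identification $\|v_0\|_{L^2(\rho)}=d_{SH}(\rho,\nu)$ is exactly right (it follows from the constant-speed geodesic structure, and can be checked directly from the formula \eqref{eqn:logdSH}). One small caveat shared by both arguments: the $\lambda$-convexity in Definition~\ref{def:convex} is only asserted for endpoints in $\mathcal P_C$, so strictly speaking the upper bound as written controls only competitors $\nu\in\mathcal P_C$; neither you nor the paper addresses general $\nu$, but this is harmless for the applications in \S\ref{sec:gamma}.
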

\begin{proof}
Let $\zeta = \partial_{SH} \mathcal{G}(\rho)$. Let $\mu_t = (1-t \zeta) \rho$. The fact that $|\partial_{SH} \mathcal{G}(\rho) | \leq
\|\partial_{SH} \mathcal{G}(\rho)\|_{L^2(\rho)}$ follows by considering the limit along $\mu_t \to \rho$.  To show the opposite inequality note that 
\[ \mathcal{G}(\rho) -  \mathcal{G}(\mu_t) \geq  \int \zeta_t (\rho - \mu_t) + o(d_{SH}(\rho, \mu_t))  = t \int \zeta_t \zeta + o(d_{SH}(\rho, \mu_t)) \]
where $\zeta_t = \partial_{SH} \mathcal{G}(\mu_t)$. 
Noting that $d_{SH}(\rho, \mu_t) = t \| \zeta \|_{L^2(\rho)} + o(d_{SH}(\rho, \mu_t)) $ and using the continuity of $\zeta_t$ implies that
\[ \limsup_{t \to 0+} \frac{\mathcal{G}(\rho) -  \mathcal{G}(\mu_t)}{d_{SH}(\rho, \mu_t)}  \geq \|\zeta\|_{L^2(\rho)}.   \]
\end{proof}

With the above preparation, we are able to rigorously identify \eqref{eqn:gfkerbd} as the $d_{SH}$ gradient flow of $\fve$. The existence results of Lemma \ref{lem:wellposeeps0} and Theorem \ref{thm:wlBDeps} imply that during the interval of existence  $\partial_t \rhove = -\rhove\partial_{SH} \fve(\rhove)$. Furthermore the solutions are in $AC([0,T], (\mathcal P_C, d_{SH}))$, which can be verified by direct check based of solution formula of Lemma \ref{lem:wellposeeps0} for $\rho_t$, and by $\rhove \in C^1([0,T], L^2(\T^d))$ using Theorem \ref{thm:wlBDeps}. Thus $\rho$ and 
 $\rhove$ are gradient flows of the respective equations, and consequently curves of maximal slope.
 \smallskip

Due to the semiconvexity of the functionals, the solutions also satisfy an evolution variational inequality (Chapter 11 of \cite{ambrosio2008gradient} for Wasserstein gradient flows, and \cite{muratori20} for general metric spaces). 
This implies that gradient flow solutions are unique. In particular while our notion of $\lambda$ convexity is restricted, the proof of quantitative stability of Theorem 11.1.4 of \cite{ambrosio2008gradient} carries over.  
Thus gradient flow solutions 
 coincide with the solutions of Lemma \ref{lem:wellposeeps0} and  Theorem \ref{thm:wlBDeps}, respectively. 


\subsection{$\Gamma$-convergence of gradient flows} \label{sec:gamma}

The next question is whether the dynamics \eqref{eqn:gfkerbd} converges in any sense to the idealized dynamics  \eqref{eqn:purebd} as $\varepsilon \to 0$. The natural notion to study is the $\Gamma$-convergence of gradient flows \`a la Sandier-Serfaty \cites{sandier2004gamma, serfaty2011gamma}. We will show a proof for $\T^d$ with a compactly supported kernel. The following theorem, essentially a rephrase of \cite[Theorem 2]{serfaty2011gamma} but adapted to our setting, summarizes the conditions we need to verify for the $\Gamma$-convergence of gradient flow.

\begin{theorem}\label{thm:gammaconvgen} \cite{serfaty2011gamma}*{Theorem 2}
Let $\rhove_t$ be solutions of \eqref{eqn:gfkerbd} that are curves of maximal slopes.
Suppose the initial conditions are well-prepared, in the sense that as $\varepsilon\to 0$, \begin{equation*}
    \rhove_0 \xrightarrow{d_{SH}} \rho_0, \, \textrm{ and }\, \fve(\rhove_0) \rightarrow \calF(\rho_0).
\end{equation*}Suppose also that as $\varepsilon\to 0$, we have $\rhove_t \xrightarrow{d_{SH}}\nu_t$ 
for almost every $t$, as well as the following conditions:   
\begin{listi}
 \item $\displaystyle{\liminf_{\varepsilon\to 0}}\int_0^t |\partial_t \rhove_s|^2 ds \ge \int_0^t|\partial_t\nu_s|^2 ds$,
    \item $\displaystyle{\liminf_{\varepsilon\to 0}}\, \fve(\rhove_t) \ge \calF(\nu_t)$,
    \item The slopes $\partial_{SH} \fve(\rhove_t)$ and $\partial_{SH} \calF(\nu_t)$ are strong upper gradients, and \[\displaystyle{\liminf_{\varepsilon\to 0}}\int_0^t |\partial_{SH} \fve(\rhove_s)|^2 ds \ge \int_0^t|\partial_{SH} \calF(\nu_s)|^2 ds,\]
\end{listi}
then $\nu_t$ must be the solution of gradient flow \eqref{eqn:purebd} with energy functional $\calF$ and initial condition $\nu_0=\rho_0$.
\end{theorem}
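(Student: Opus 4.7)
The strategy is a direct application of the Sandier--Serfaty framework for $\Gamma$-convergence of gradient flows in a metric space, specialized to $(\mathcal P_C, d_{SH})$. The starting point is the energy-dissipation inequality for the curve of maximal slope $\rhove$ of $\fve$:
\[
\fve(\rhove_t) + \tfrac{1}{2}\!\int_0^t |\partial_s \rhove_s|^2\, ds + \tfrac{1}{2}\!\int_0^t |\partial_{SH}\fve(\rhove_s)|^2\, ds \;\le\; \fve(\rhove_0).
\]
The first step is to pass to the limit $\varepsilon \to 0$ in this inequality.

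On the right-hand side, well-preparedness gives $\fve(\rhove_0) \to \calF(\rho_0)$. On the left-hand side, combining the subadditivity of $\liminf$ with hypothesis (ii) applied pointwise in $t$, hypothesis (i) on the first dissipation integral, and hypothesis (iii) on the second, the plan is to deduce
\[
\calF(\nu_t) + \tfrac{1}{2}\!\int_0^t |\partial_s \nu_s|^2\, ds + \tfrac{1}{2}\!\int_0^t |\partial_{SH}\calF(\nu_s)|^2\, ds \;\le\; \calF(\rho_0).
\]
Because $\partial_{SH}\calF$ is a strong upper gradient by hypothesis (iii), the chain rule along the absolutely continuous curve $\nu$ and Young's inequality provide the matching upper bound
\[
\calF(\rho_0) - \calF(\nu_t) \;\le\; \int_0^t |\partial_s \nu_s|\, |\partial_{SH}\calF(\nu_s)|\, ds \;\le\; \tfrac{1}{2}\!\int_0^t |\partial_s \nu_s|^2\, ds + \tfrac{1}{2}\!\int_0^t |\partial_{SH}\calF(\nu_s)|^2\, ds.
\]
Equality must therefore hold, which is precisely the statement that $\nu$ is a curve of maximal slope for $\calF$ starting at $\rho_0$.

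The final step invokes uniqueness: by the $\lambda$-geodesic semiconvexity of $\calF$ on $\mathcal P_C$ along $d_{SH}$-geodesics (Lemma~\ref{lem:geosemiconvex}), together with the evolution variational inequality discussed at the end of Section~\ref{sec:wpBDeps}, curves of maximal slope of $\calF$ issuing from a given initial datum are unique. Hence $\nu_t$ must coincide with the solution of \eqref{eqn:purebd} produced by Lemma~\ref{lem:wellposeeps0}. The genuinely hard ingredients (not executed here, but absorbed into the hypotheses) will be the verification of (i) and (iii) in our setting; in particular, the strong-upper-gradient property of $\partial_{SH}\calF$ is the delicate part, since $\calF$ is semiconvex only on the restricted submanifold $\mathcal P_C$, which is why the argument is carried out for classical solutions bounded from above and below.
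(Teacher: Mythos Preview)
Your proposal is correct and reproduces the standard Sandier--Serfaty argument; note that the paper does not give its own proof of this theorem but simply cites it from \cite{serfaty2011gamma}*{Theorem 2}, so there is nothing to compare against. One small point: to invoke the chain rule for the strong upper gradient you implicitly use that $\nu$ is absolutely continuous in $d_{SH}$, which follows from condition (i) (the limit of the $L^2$ metric-derivative bounds), so it would be worth stating that explicitly.
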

\begin{theorem}\label{thm:bdgammaconv}
 Under the same assumptions as Theorem \ref{thm:wlBDeps}, for any fixed $T$, $(\rhove_t)_{0\le t \le T}$ converges in $d_{SH}$ to $(\rho_t)_{0\le t \le T}$ the solution of \eqref{eqn:purebd} with the same initial condition $\rhove_0=\rho_0$. In particular, $\lim_{\varepsilon \to 0} \fve(\rhove_t) = \calF(\rho_t)$.
\end{theorem}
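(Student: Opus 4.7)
The plan is to invoke the Sandier--Serfaty framework, Theorem \ref{thm:gammaconvgen}, along a compactness-extracted subsequence, verify its three hypotheses, and then conclude via the uniqueness of the limiting gradient flow (Lemma \ref{lem:wellposeeps0}) that the full family $\rhove_t$ converges. The well-preparedness of the initial data is immediate since $\rhove_0=\rho_0$ and Theorem \ref{thm:blobengyconv}(i) gives $\fve(\rho_0)\to \calF(\rho_0)$ as $\varepsilon\to 0$ (both sides are equal when $\rho_0$ is smooth and bounded above and below, thanks to Jensen's inequality applied as in \eqref{eqn:fvesmallerKL}).

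First I would set up compactness in $C([0,T],d_{SH})$. The a priori bounds in Lemmas \ref{lem:upperbdgradw}--\ref{lem:upperbdrhovet} show that, for $\varepsilon$ small enough, $\rhove_t$ lies in a common $\mathcal{P}_{\tilde C}$ uniformly for $t\in[0,T]$; in particular the densities and $\Kve*\rhove_t$ are uniformly bounded above and below. Since $\rhove$ is a curve of maximal slope for $\fve$, the energy identity $\frac{\ud}{\ud t}\fve(\rhove_t)=-|\partial_{SH}\fve(\rhove_t)|^2=-|\partial_t\rhove_t|^2$ together with $\fve(\rhove_0)\to \calF(\rho_0)$ yields a uniform-in-$\varepsilon$ bound on $\int_0^T |\partial_t \rhove_t|^2\,\ud t$. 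Cauchy--Schwarz then gives $\tfrac12$-H\"older equicontinuity of $t\mapsto \rhove_t$ in $d_{SH}$. Combined with compactness of $\mathcal{P}_{\tilde C}$ in $L^1(\T^d)$ and Lemma \ref{lem:equivdHL1}, an Arzel\`a--Ascoli argument extracts a subsequence converging in $C([0,T],d_{SH})$ to some limit curve $\nu_t$ with $\nu_0=\rho_0$.

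Next I would check the three conditions of Theorem \ref{thm:gammaconvgen}. Condition (ii) is precisely the $\Gamma$-$\liminf$ inequality of Theorem \ref{thm:blobengyconv}(i), since $d_{SH}$-convergence in $\mathcal{P}_{\tilde C}$ implies weak-$\ast$ convergence. Condition (i) is a classical lower-semicontinuity of the metric derivative under the $C([0,T],d_{SH})$-convergence established above (cf.\ \cite{ambrosio2008gradient}, Chapter 1). The core of the argument is (iii). Lemma \ref{lem:geosemiconvex} gives $\lambda$-geodesic convexity of both $\calF$ and $\fve$ on $\mathcal{P}_{\tilde C}$, and Lemma \ref{lem:metric slope} identifies their metric slopes with the $L^2(\rho)$ norm of the explicit subgradients identified after Lemma \ref{lem:metric slope}; in particular both slopes are strong upper gradients. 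For the $\liminf$ on the slopes I would argue pointwise in $t$ via Fatou. The uniform two-sided bounds on $\rhove_t$ and $\Kve*\rhove_t$ imply $\log(\Kve*\rhove_t/\pi)\to \log(\nu_t/\pi)$ and $\Kve*(\rhove_t/\Kve*\rhove_t)\to 1$ strongly (say in $L^2$), so the integrand of $|\partial_{SH}\fve(\rhove_t)|^2$ converges a.e.\ to the integrand of $|\partial_{SH}\calF(\nu_t)|^2=\int \nu_t(\log(\nu_t/\pi)-\int \log(\nu_t/\pi)\,\nu_t)^2$, and Fatou closes condition (iii).

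With all three conditions verified, Theorem \ref{thm:gammaconvgen} forces the limit $\nu_t$ to solve \eqref{eqn:purebd} with initial datum $\rho_0$; by the uniqueness in Lemma \ref{lem:wellposeeps0} we get $\nu_t=\rho_t$. Since every subsequence admits a further subsequence converging to the same $\rho_t$, the full family $\rhove_t$ converges in $d_{SH}$ to $\rho_t$ on $[0,T]$. The energy convergence $\fve(\rhove_t)\to \calF(\rho_t)$ is the standard byproduct: the energy-dissipation equalities along both flows, combined with the $\liminf$ bounds in (ii) and the slope $\liminf$ in (iii), force equality in (ii) for every $t\in[0,T]$. The most delicate step in this program is the slope $\liminf$: one must ensure that the regularization contribution $\Kve*(\rhove_t/\Kve*\rhove_t)-1$ not only vanishes pointwise but does so strongly enough against the non-constant piece $\log(\Kve*\rhove_t/\pi)-\int \log(\Kve*\rhove_t/\pi)\,\rhove_t$ so as not to create extra cross-terms in the limit. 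This is precisely where the uniform $L^\infty$ upper and lower bounds on $\rhove_t$ (and hence on $\Kve*\rhove_t$) are indispensable.
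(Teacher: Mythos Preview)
Your overall architecture is the same as the paper's: extract a limit curve, check the three Sandier--Serfaty conditions, and identify the limit via uniqueness in Lemma~\ref{lem:wellposeeps0}. The verification of (i), (ii), and the use of Lemma~\ref{lem:metric slope} for (iii) are all fine. However, there is a genuine gap in your compactness step.

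You assert ``compactness of $\mathcal{P}_{\tilde C}$ in $L^1(\T^d)$'' to run Arzel\`a--Ascoli in $C([0,T],d_{SH})$. This is false: $\mathcal{P}_{\tilde C}$ is closed and bounded in $L^\infty$ but \emph{not} compact in $L^1$. For instance, on $\T^1$ the densities $\rho_n(x)=1+\tfrac12\sin(2\pi n x)$ all lie in $\mathcal{P}_2$ and converge only weakly (to $1$), not in $L^1$. With only weak-$\ast$ compactness of the values you would get convergence in $C([0,T],W_2)$, not in $C([0,T],d_{SH})$, and then your slope argument in (iii) collapses: the claimed strong convergences $\log(\Kve*\rhove_t/\pi)\to\log(\nu_t/\pi)$ and $\Kve*(\rhove_t/\Kve*\rhove_t)\to 1$ both rely on $\Kve*\rhove_t-\rhove_t\to 0$ strongly, which uniform two-sided bounds alone do not give (the same oscillatory example shows this).

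The missing ingredient is precisely the uniform spatial Lipschitz bound on $\wvet=\log\rhove_t$ from Lemma~\ref{lem:upperbdgradw}. The paper uses it directly: $\nabla\rhove_t=\rhove_t\nabla\wvet$ is uniformly bounded, so $\{\rhove_t\}$ is equicontinuous in $x$, and Arzel\`a--Ascoli on $\T^d$ gives \emph{uniform} convergence $\rhove_t\to\nu_t$; then Lemma~\ref{lem:rhokverhoclose} yields $\Kve*\rhove_t/\rhove_t\to 1$ uniformly, from which the pointwise (hence Fatou) passage in (iii) is immediate. If you want to keep your time-wise Arzel\`a--Ascoli packaging, you must still invoke Lemma~\ref{lem:upperbdgradw} to upgrade the target space from $\mathcal{P}_{\tilde C}$ to a uniformly Lipschitz (hence $L^1$-precompact) class; after that your argument goes through and is essentially equivalent to the paper's.
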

\begin{proof}
The plan of our proof is to first use Arzela-Ascoli Theorem to identify the limiting sequence $\nu_t$, then verify the three conditions in Theorem \ref{thm:gammaconvgen}.

\smallskip

Notice $\nabla \rhove_t = \rhove_t \nabla \wvet$, hence the combination of Lemmas \ref{lem:upperbdgradw} and \ref{lem:upperbdrhovet}, as well as $\pi \in \mathcal{P}_C$, yield that $\nabla \rhove_t$ is uniformly bounded for any $t\in[0,T]$, when $\varepsilon$ is sufficiently small. Thus, since $\rhove_t$ is also uniformly bounded (c.f. Lemma \ref{lem:upperbdrhovet}), we may invoke Arzela-Ascoli Theorem, so that there exists a subsequence, still denoted as $\rhove_t$, that converges uniformly to some function $\nu_t$ as $\varepsilon\to 0$.

\smallskip

We now verify the three conditions of Theorem \ref{thm:gammaconvgen}. The proof of (ii) is straightforward, since by $\Gamma$-convergence of the energy functional \cite{carrillo2019blob}*{Theorem 4.1} (note that convergence in $d_{SH}$ implies convergence in $W_2$ on $\T^d$), $\displaystyle{\liminf_{\varepsilon\to 0}}\, \fve(\rhove_t) \ge \calF(\nu_t)$, and trivially $\rhove_t \in \mathcal{P}_2(\T^d)$ on bounded domain.

\smallskip

We now prove (i). The proof is standard and follows from the arguments in \cite{craig2016convergence}*{Theorem 5.6}. Assume without loss of generality that there exists $0 \le C < \infty$ so that
\[C = \liminf_{\varepsilon \to 0} \int_0^T |\partial_t \rhove_t |^2 dt.\]
Choose a subsequence $|\partial_t\tilde{\rho}_t^{(\varepsilon)}|$ so that $\lim_{\varepsilon \to 0} \int_0^T |\partial_t \tilde{\rho}_t^{(\varepsilon)}  |^2 dt = C$. Then $|\partial_t \tilde{\rho}_t^{(\varepsilon)}|$ is bounded in $L^2(0,T)$ so, up to a further subsequence, it is weakly convergent to some $v(t)\in L^2(0,T)$. Consequently, for any $0\le s_0\le s_1\le T$,
\[\lim_{\varepsilon\to 0}\int_{s_0}^{s_1} |\partial_t \tilde{\rho}_t^{(\varepsilon)} | dt = \int_{s_0}^{s_1} v(t) dt. \]
By taking limits in the definition of the metric derivative and using the lower semi-continuity of $d_{SH}$ with respect to weak-* convergence (see the proof in \cite[Theorem 5]{kondratyev2016new} for $d_H$, which also applies to $d_{SH}$ using conic structure \eqref{eqn:conicdSH}), 
\[d_{SH}(\tilde{\rho}_{s_0}^{(\varepsilon)},\tilde{\rho}_{s_1}^{(\varepsilon)}) \le \int_{s_0}^{s_1} |\partial_t \tilde{\rho}_t^{(\varepsilon)} | dt \Rightarrow d_{SH}(\nu_{s_0},\nu_{s_1}) \le \int_{s_0}^{s_1} v(t) dt. \]
  By \cite{ambrosio2008gradient}*{Theorem 1.1.2}, this implies that $|\partial_t \nu_t| \le v(t)$ for a.e. $t\in (0, T)$. Thus, by the lower semicontinuity of the $L^2(0,T)$ norm with respect to weak convergence,
\[\liminf_{\varepsilon\to 0} \int_0^T |\partial_t \rhove_t |^2 dt = \lim_{\varepsilon \to 0} \int_0^T |\partial_t \tilde{\rho}_t^{(\varepsilon)} |^2 dt \ge \int_0^T v(t)^2 dt \ge \int_0^T |\partial_t \nu_t|^2 dt.\]

\smallskip

Regarding (iii), we first claim here that $\rhove_t$ converges uniformly to $\nu_t$ implies that $\Kve * \rhove_t$ also converges uniformly to $\nu_t$ as $\varepsilon\to 0$. Indeed, 
\begin{align*}
    |\Kve*\rhove_t(x) - \nu_t(x) | & =  \Big|\int_{\mathbb{T}^d} \Kve(x-y)(\rhove_t(y) -\nu_t(x)) \ud y \Big| \\ & \le \Big|\int_{\mathbb{T}^d} \Kve(x-y)(\rhove_t(y) -\rhove_t(x)) \ud y  \Big| \\ & \qquad +\Big|\int_{\mathbb{T}^d} \Kve(x-y)(\rhove_t(x) -\nu_t(x)) \ud y \Big| \\ & \le \sup_{z\in \mathbb{T}^d} |\nabla \rhove_t(z)| \int_{\mathbb{T}^d} \Kve(x-y)|x-y|\ud y + \sup_{z\in \mathbb{T}^d} |\rhove_t(z)-\nu_t(z)|.
\end{align*} The first term goes to zero since $|\nabla \rhove_t(z)|$ is uniformly bounded in $z$ and $\varepsilon$, while the integral is $\varepsilon M_1(K)\to 0$, while the second term also goes to zero due to uniform convergence of $\rhove_t$ to $\nu_t$.
Moreover, by \eqref{eqn:infbdrve}, $\nu_t(x)$ is bounded and away from zero for all $x\in \mathbb{T}^d$ and $t\in[0,T]$, which implies that $\frac{\rhove_t}{\Kve* \rhove_t}$ and consequently $\Kve*\frac{\rhove_t}{\Kve* \rhove_t}$ converge uniformly to 1 as $\varepsilon \rightarrow 0$. Consequently,  the inequality 
\begin{multline}\label{eqn:goalmetricslope}
\liminf_{\varepsilon\to 0} \int_0^T\! \int \rhove_t\left( \log \frac{\Kve*\rhove_t}{\pi} + \Kve * \left( \frac{ \rhove_t}{\Kve* \rhove_t}\right) - \int \rhove_t \log \frac{\Kve*\rhove_t}{\pi} -1\right)^2 \\ 
\ge \int_0^T \!\int \nu_t \left(\log \frac{\nu_t}{\pi} - \int \nu_t \log \frac{\nu_t}{\pi} \right)^2\end{multline}
holds by Fatou's lemma. Now, by \cite[Corollary 2.4.10]{ambrosio2008gradient}, since 
$\calF$ and $ \fve$ are geodesically semiconvex on $\mathcal P_C$,
$|\partial_{SH} \mathcal F(\rho)|$ and
$|\partial_{SH} \fve(\rhove)|$ are strong upper gradients.
By Lemma \ref{lem:struppgr} and Lemma \ref{lem:metric slope} we have that for $\mathcal{G} = \calF$ or $\fve$,
\begin{equation}\label{eqn:dSHslope}
    |\partial_{SH} \mathcal{G} (\rho)| = \left(\int_{\T^d} \rho\left(\frac{\delta\mathcal{G}}{\delta\rho} - \int_{\T^d} \rho\frac{\delta\mathcal{G}}{\delta\rho} \right)^2\right)^\frac{1}{2}.
\end{equation} Hence, in view of \eqref{eqn:goalmetricslope}, we can verify condition (iii) of Theorem \ref{thm:gammaconvgen}. This allows us now to conclude:  since all three conditions of Theorem \ref{thm:gammaconvgen} are now fulfilled, by $\Gamma$-convergence, $\nu_t$ must be a solution of \eqref{eqn:purebd} with initial condition $\rho_0$, and therefore by the uniqueness result established in Lemma \ref{lem:wellposeeps0}, must be $\rho_t$.
\end{proof}

\begin{remark}
We would like to comment here that the above strategy does not apply to the whole space $\R^d$, since we could not assume that $\nabla V \in L^\infty(\R^d)$ or $\wvet$ being globally Lipschitz continuous on $\R^d$. Moreover, the ratio $\frac{\rho}{\Kve*\rho}$ may be very close to 0 at infinity, unlike Lemma \ref{lem:rhokverhoclose} which says the ratio is always $1\pm O(\varepsilon)$, making it difficult to compare \eqref{eqn:gfkerbd} with \eqref{eqn:purebd}. Finally $\fve$ might not be displacement semiconvex when $\rho$ is close to zero, which is unavoidable in the whole space.
\end{remark}

\begin{remark}
If the dynamics \eqref{eqn:gfkerbd} has an initial condition $\rho_0 = \sum_{i=1}^N m_i(0) \delta_{x_i}$ for some $x_i \in \R^d,i=1,\ldots,N, \, m_i(0)>0, \, \sum_{i=1}^N m_i(0)=1$, then \eqref{eqn:gfkerbd} has a solution of the form $\rho_t =  \sum_{i=1}^N m_i(t) \delta_{x_i}$ where the masses $m_i(t)$ satisfy the following ODE: 
\begin{align} \label{eqn:bdkergfn}
\begin{split}
    \frac{d m^i}{dt} =  & 
    - m_i \left[ \log\left( \sum_{j=1}^N m_j \Kve(x^i-x^j)\right) - \log \pi(x^i)  +
    \sum_{j=1}^N \frac{m_j \Kve(x^i-x^j)}{\sum_{k=1}^N m_\ell \Kve (x^j-x^k)} \right. \\
    & \left. \phantom{ - m_i [}\,- \sum_{\ell=1}^N m_\ell \log \left( \sum_{j=1}^N m_j \Kve(x^{\ell}-x^j)\right) + \sum_{\ell=1}^N m_\ell \log \pi(x^\ell) -1 \right].
\end{split} 
\end{align}
The above ODE is obviously well-posed, since it is a finite-dimensional ODE with a trapping region, which is the probability simplex. On the other hand,  although we are unable to prove it, we believe the long-time well-posedness of \eqref{eqn:gfkerbd} with a smooth initial condition is true with more careful estimates. More specifically, we believe the solution can be approximated by certain minimizing movement scheme, at least on a compact domain as shown \cite{laschos2022evolutionary} in a different setting. We leave careful investigations along this line for future research. 
\end{remark}

\subsection{Convergence of asymptotic sets} \label{sec:asymptotics}
While $\fve$ defined in \eqref{eqn:funckerkl} may not have a unique minimizer, and the dynamics \eqref{eqn:gfkerbd} may not converge to a unique probability distribution as $t \to \infty$, the $\Gamma$-convergence of regularized gradient flows \eqref{eqn:gfkerbd} to \eqref{eqn:purebd} and the long-time convergence of the limiting gradient flow \eqref{eqn:purebd} guarantees that the long-time limiting set of \eqref{eqn:gfkerbd} is close to that of \eqref{eqn:purebd}, which is $\pi$. The goal of this section is to discuss some sufficient conditions where convergence of asymptotic sets of gradient flows hold.

\smallskip

We first present below Proposition \ref{prop:asyset}, which we state for general gradient flows in metric spaces. In particular the lemma can be applied to gradient flows in Wasserstein metric and has interesting consequences for 2-layer neural network training which we will discuss in Theorem \ref{thm:jmmasyconv}. 

\begin{proposition}\label{prop:asyset}
Let $E_\varepsilon$ and $E$ be energy functionals, and let $(\rhove_t)_{0\le t < T_\varepsilon^*}$ and $(\rho_t)_{0\le t <\infty}$ be continuous curves in a metric space, with metric $d$ and maximal existence time $T_\varepsilon^*$ and $\infty$ respectively, such that $E_\varepsilon$ is nonincreasing along $\rhove_t$ and $E$ is nonincreasing along $\rho_t$. 
Assume the following conditions hold:
\begin{listi}
    \item $E$ has a unique minimizer $\pi$, and $\rho_t$ converges  to $\pi$ as $t\to\infty$ in the sense that $E(\rho_t) \rightarrow E(\pi)$.
    \item  As $\varepsilon \to 0$, $\liminf_{\varepsilon\to 0} E_\varepsilon(\mu_\varepsilon)\ge E(\mu)$ for all sequences $\mu_\varepsilon \xrightarrow{d}\mu$.
   \item As $\varepsilon\to 0$, we have $T_\varepsilon^* \to \infty$. Moreover, for every $t\ge 0$, we have that $\rhove_t \to \rho_t$ in $d$ and $E_\varepsilon(\rhove_t) \rightarrow E(\rho_t)$.
 \item The sub-level sets of $E_\varepsilon$ are uniformly precompact in the following sense: There exists $\varepsilon_0>0$ such that for any $M\in(0,\infty)$ the set $\bigcup \left\{ E_\varepsilon^{-1} (-\infty, M) \::\: 0< \varepsilon < \varepsilon_0 \right\}$ is precompact. 
\end{listi}
Then we have the following:
\begin{lista}
    \item For any $\varepsilon >0$ and any time sequence $(T_\varepsilon)_\varepsilon$ such that $T_\varepsilon< T_\varepsilon^*$ and $\lim_{\varepsilon\to 0} T_\varepsilon = \infty$, we have $\rhove_{T_\varepsilon} \xrightarrow{d} \pi $ as $\varepsilon\to 0$.
    \item If $T_\varepsilon^*=\infty$ for any sufficiently small $\varepsilon$, then let 
\begin{equation}\label{eqn:defolim}
\mathcal{A}_\varepsilon := \left\{\rhove_\infty \: \Big| \: \exists \, 0\le t_1<t_2<\ldots<t_n<\ldots  \text{ s.t. } \lim_{n\to \infty} t_n=\infty \text{ and }  \lim_{n\to \infty}\rhove_{t_n}= \rhove_\infty \text{ in } d \right\}
\end{equation} be the $\omega$-limit set of $\rhove_t$, we have \[\mathcal{A}_\varepsilon \to \{\pi\} \text{ as } \varepsilon \rightarrow 0\] with respect to Hausdorff distance  corresponding to $d$. 
\end{lista} 
\end{proposition}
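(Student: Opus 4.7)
The plan for part (a) is to combine the monotonicity of $E_\varepsilon$ along $\rhove_t$ with the pointwise-in-time convergence (iii) to push a limsup upper bound on $E_\varepsilon(\rhove_{T_\varepsilon})$ down to $E(\pi)$, and then extract the limit via the uniform precompactness (iv) and the $\Gamma$-liminf inequality (ii). First I would fix an arbitrary $s\geq 0$ and observe that, since $T_\varepsilon\to\infty$, one has $T_\varepsilon>s$ for all sufficiently small $\varepsilon$, so by monotonicity $E_\varepsilon(\rhove_{T_\varepsilon})\leq E_\varepsilon(\rhove_s)$. Hypothesis (iii) gives $\lim_{\varepsilon\to 0} E_\varepsilon(\rhove_s) = E(\rho_s)$, whence $\limsup_{\varepsilon\to 0} E_\varepsilon(\rhove_{T_\varepsilon})\leq E(\rho_s)$. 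Letting $s\to\infty$ and invoking (i) then yields $\limsup_{\varepsilon\to 0} E_\varepsilon(\rhove_{T_\varepsilon})\leq E(\pi)$.

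Next, since $E_\varepsilon(\rhove_0)\to E(\rho_0)$ by (iii) and $E_\varepsilon$ is nonincreasing along $\rhove_t$, the entire family $\{\rhove_{T_\varepsilon}\}_\varepsilon$ lies in a common sublevel set of the $E_\varepsilon$'s, hence is precompact by (iv). Any cluster point $\mu$ of $\rhove_{T_\varepsilon}$ in the metric $d$ must satisfy $E(\mu)\leq \liminf_\varepsilon E_\varepsilon(\rhove_{T_\varepsilon})\leq E(\pi)$ by (ii), and the uniqueness of the minimizer in (i) forces $\mu=\pi$. Since every subsequence admits a further subsequence converging to $\pi$, the whole family converges to $\pi$, proving (a).

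For part (b), I would argue by contradiction using a diagonal extraction. First note that $\mathcal{A}_\varepsilon$ is nonempty, because the argument above shows the orbit $\{\rhove_t\}_{t\geq 0}$ lies in a fixed precompact set and thus possesses at least one $\omega$-limit point. Since $\{\pi\}$ is a singleton, Hausdorff convergence $\mathcal{A}_\varepsilon\to\{\pi\}$ reduces to $\sup_{\mu\in\mathcal{A}_\varepsilon} d(\mu,\pi)\to 0$. If this failed, one could extract $\eta>0$, $\varepsilon_n\to 0$, and $\mu_n\in\mathcal{A}_{\varepsilon_n}$ with $d(\mu_n,\pi)\geq\eta$. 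By the very definition of $\mathcal{A}_{\varepsilon_n}$ there exist times $t_n\geq n$ with $d(\rho^{(\varepsilon_n)}_{t_n},\mu_n)<1/n$. The proof of (a) works verbatim for any sequence $\varepsilon_n\to 0$ and any $t_n\to\infty$ with $t_n<T^*_{\varepsilon_n}$ (the fact that $T_\varepsilon$ is a function of $\varepsilon$ is inessential), so it delivers $\rho^{(\varepsilon_n)}_{t_n}\to\pi$, contradicting $d(\mu_n,\pi)\geq\eta$.

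The main obstacle is the limsup bound in part (a): one must interchange the two limits $\varepsilon\to 0$ (at fixed $s$) and $s\to\infty$ in the correct order. This is where the three ingredients genuinely interact: monotonicity of $E_\varepsilon$ along its own flow pins $E_\varepsilon(\rhove_{T_\varepsilon})$ below $E_\varepsilon(\rhove_s)$ at any fixed past time $s$, the pointwise convergence in (iii) transfers this bound to the limiting energy at that $s$, and only then does (i) bring it down to $E(\pi)$; the $\Gamma$-liminf inequality (ii) and precompactness (iv) merely package this upper bound together with uniqueness of the minimizer to identify the limit.
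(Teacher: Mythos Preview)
Your proof is correct and follows essentially the same approach as the paper: both use monotonicity of $E_\varepsilon$ to bound $E_\varepsilon(\rhove_{T_\varepsilon})$ by its value at a fixed past time, convert via (iii) and (i) to an upper bound by $E(\pi)$, then invoke precompactness (iv), the $\Gamma$-liminf inequality (ii), and uniqueness of the minimizer to identify the limit. The only cosmetic difference is that the paper phrases the argument as a $\delta$-contradiction while you compute the $\limsup$ directly and use the subsequence criterion; your presentation is in fact slightly cleaner, and your treatment of (b) via diagonal extraction spells out what the paper leaves as ``identical.''
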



\begin{proof}
We only prove (a) since the proof for (b) is identical. Fix $\delta>0$, then there exists a $T>0$ such that \begin{equation*}
    E(\rho_T) \le E(\pi)+\delta.
\end{equation*} By assumption (iii), there exists some $\varepsilon_0 \geq \varepsilon_1>0$ such that for all $\varepsilon<\varepsilon_1$, we have $T_\varepsilon^* >T_\varepsilon>T$, and 
\begin{equation*}
    E_\varepsilon(\rhove_T) \le E(\rho_T)+\delta.
\end{equation*}
To prove the claim we argue by contradiction. Assume that  $\limsup_{\varepsilon \to 0} d(\rhove_{T_\varepsilon},\pi) \geq \lambda >0$, then along a subsequence (not relabeled) $\varepsilon \to 0$, there  exists  $\rhove_{T_\varepsilon} $ such that $d(\rhove_{T_\varepsilon}, \pi)>\lambda/2$. 

\smallskip

Using that $E_\varepsilon$ is nonincreasing along $\rhove_t$, we have
\begin{equation*}
    E_\varepsilon(\rhove_{T_\varepsilon}) \le E_\varepsilon(\rhove_T) \le E(\pi)+2\delta.
\end{equation*}
Using the compactness assumption (iv) it follows that 
$\rhove_{T_\varepsilon} \to  \sigma$ in metric $d$ along a further subsequence as $\varepsilon \to 0$ for some $\sigma$.
From lower-semicontinuity assumption (ii) follows that
\[E(\sigma) \le \lim\inf_{\varepsilon\to 0} E_\varepsilon(\rhove_{T_\varepsilon}) \le E(\pi)+2\delta.\] 
Since $\delta$ is arbitrary, we can take $\delta\to 0$ to obtain 
\[E(\sigma) \le E(\pi),\] 
which in turn gives $\sigma=\pi$ since $\pi$ is the unique minimizer of $E$.
On the other hand, 
\begin{equation*}
    d(\sigma,\pi) = \lim_{\varepsilon\to 0} d(\rhove_{T_\varepsilon},\pi) \ge \frac{\lambda}{2}.
\end{equation*} 
Contradiction.
\end{proof}

\noindent An application of Proposition \ref{prop:asyset} is the following Theorem \ref{thm:jmmasyconv}. Here let us recall the setting of two-layer neural network training in \cite{javanmard2020analysis}: the goal is to learn a concave function $f$ using a neural network, which is achieved by minimizing the risk functional in domain $\Omega$ with noise level $\tau>0$:
\begin{equation}\label{eqn:jmmfunctionallater} F^\delta(\rho^\delta) = \int_{\Omega} \left(\frac{1}{2}(K_\delta*\rho^\delta-f)^2+\tau \rho^\delta \log \rho^\delta\right) \ud x.\end{equation} As $\delta\to 0$, $F^\delta$ is close to the limiting functional \begin{equation}\label{eqn:jmmfcnl0later}
    F(\rho) =\int_{\Omega} \left(\frac{1}{2}(\rho-f)^2+\tau \rho \log \rho\right) \ud x,
\end{equation}
which has a unique minimizer. In the regime where the number of neurons approach infinity, the process of stochastic gradient descent is characterized by the following equation
\begin{equation}\label{eqn:jmmgfdeltalater}
    \partial_t \rho^\delta_t = \nabla \cdot (\rho_t^\delta \nabla \Psi) +\tau \Delta \rho_t^\delta, \, \textrm{ with }\Psi = -K^\delta*f + K^\delta*K^\delta*\rho_t^\delta,
\end{equation} which is the Wasserstein gradient flow of \eqref{eqn:jmmfunctionallater}. Heuristically, as $\delta \rightarrow 0$, the solution $\rho^\delta_t$ converges to the solution $\rho_t$ of the viscous porous-medium equation:
\begin{equation}\label{eq:pme}
    \partial_t \rho_t = -\nabla \cdot(\rho_t \nabla f) + \Delta \rho_t^2+  \tau \Delta \rho_t.
\end{equation}
Observe that equation \eqref{eq:pme} is the Wasserstein gradient flow of $F$.
\begin{theorem}\label{thm:jmmasyconv}
Let $\Omega\subset \R^d$ be a convex compact set with a $C^2$-boundary. Assume that $f\in C^\infty(\Omega;\R_+)$ and that $f$ is uniformly concave, i.e. there exists $\alpha>0$ such that $y^T D^2 f(x) y \leq -\alpha |y|^2$ for any $x\in \Omega$ and $y\in \R^d$.  Under the conditions of \cite{javanmard2020analysis}, let $\rho_t^\delta$ be the solution of \eqref{eqn:jmmgfdeltalater}. Then as $\delta\to 0$, the $\omega$-limit set of $\rho_t^\delta$ converges in Hausdorff metric with respect to $W_2$ to the unique minimizer of $F$ defined in \eqref{eqn:jmmfcnl0later}.
\end{theorem}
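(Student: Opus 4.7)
The plan is to apply Proposition \ref{prop:asyset} with the identifications $E_\varepsilon := F^\delta$, $E := F$, $\rho^\varepsilon_t := \rho_t^\delta$, with $\rho_t$ the solution of the limiting flow \eqref{eqn:jmmgf0}, and with the metric space being $(\mathcal{P}(\Omega), W_2)$. Since $\Omega$ is compact, existence of $\rho_t^\delta$ and $\rho_t$ for all $t\ge 0$ is guaranteed by the results of \cite{javanmard2020analysis}, so $T^*_\delta = \infty$ and we are in the regime of part (b) of the proposition. It remains to verify the four hypotheses (i)--(iv).

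Condition (i) follows from the displacement convexity of $F$ discussed in the introduction (and proved in \cite{javanmard2020analysis}): the entropic term $\tau\int \rho\log\rho$ is strictly displacement convex, so $F$ admits a unique minimizer $\pi$, and the Wasserstein gradient flow \eqref{eqn:jmmgf0} converges exponentially fast in $W_2$ to $\pi$, which in particular gives $F(\rho_t)\to F(\pi)$. Condition (iv) is immediate: since $\Omega$ is a compact subset of $\R^d$, the whole space $\mathcal{P}(\Omega)$ is compact in $W_2$, hence any collection of measures is precompact.

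For condition (ii), I would prove the $\Gamma$-liminf inequality $\liminf_{\delta\to 0}F^\delta(\mu_\delta)\ge F(\mu)$ along any sequence $\mu_\delta\xrightarrow{W_2}\mu$ by splitting $F^\delta$ into its quadratic and entropic parts. If $\mu$ is not absolutely continuous then $F(\mu)=+\infty$, and the weak-$\ast$ lower semicontinuity of the relative entropy with respect to Lebesgue makes the inequality trivial. If $\mu\ll dx$, then the $W_2$ convergence on the compact set $\Omega$ implies narrow convergence of $\mu_\delta$, and since $K_\delta$ is a standard mollifier with $\delta\to 0$, one has $K_\delta*\mu_\delta\to \mu$ narrowly as well; combined with lower semicontinuity of $\rho\mapsto\tfrac12\int(\rho-f)^2$ under narrow convergence of measures (when $\rho\in L^2$) and of $\rho\mapsto\tau\int\rho\log\rho$, this yields the inequality. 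This is essentially the argument used in \cite{carrillo2019blob}*{Theorem 4.1} and recalled as Theorem \ref{thm:blobengyconv}(i), adapted to the present quadratic + entropic energy.

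For condition (iii), the $W_2$-convergence $\rho_t^\delta\to\rho_t$ for every $t\ge 0$ is exactly the strong $L^2(\Omega)$-convergence proved in \cite{javanmard2020analysis} (since $\Omega$ is compact, $L^2$-convergence of densities implies narrow convergence, hence $W_2$ convergence). To obtain the energy convergence $F^\delta(\rho_t^\delta)\to F(\rho_t)$, which is the subtler part, I would argue as follows. The quadratic term $\tfrac12\int(K_\delta*\rho_t^\delta - f)^2$ converges to $\tfrac12\int(\rho_t - f)^2$ because $K_\delta*\rho_t^\delta\to\rho_t$ strongly in $L^2$ (this uses $L^2$-convergence of $\rho_t^\delta$ and the fact that $K_\delta$ is an $L^1$ approximation of the identity). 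For the entropic term, the regularity results of \cite{javanmard2020analysis} (and the strong concavity of $f$, which keeps $\rho_t^\delta$ away from concentrating behavior) provide uniform $L^\infty$ upper and positive lower bounds on $\rho_t^\delta$ on any fixed $t\in[0,T]$; combined with $L^2$ convergence, this gives $\int \rho_t^\delta\log\rho_t^\delta \to \int\rho_t\log\rho_t$ by dominated convergence.

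The main obstacle is the last point, namely ensuring the uniform $L^\infty$ bounds and strict positivity of $\rho_t^\delta$ needed to upgrade $L^2$ convergence to convergence of the entropy. If such bounds are not directly stated in \cite{javanmard2020analysis}, I would derive them from the comparison principle and the maximum principle for the semilinear parabolic equation \eqref{eqn:jmmgfdeltalater}, using the smoothness of $f$, the compactness of $\Omega$, and the strict positivity of $\tau$. Once these four conditions are checked, Proposition \ref{prop:asyset}(b) yields that the $\omega$-limit set $\mathcal{A}_\delta$ converges in Hausdorff distance with respect to $W_2$ to $\{\pi\}$, which is exactly the claim of the theorem.
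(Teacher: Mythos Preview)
Your proposal is correct and follows the same route as the paper: apply Proposition~\ref{prop:asyset} in $(\mathcal P(\Omega),W_2)$ and verify (i)--(iv), with (iv) trivial by compactness, (ii) via the $\Gamma$-liminf argument of \cite{carrillo2019blob}, and (iii) via the $L^2$ convergence results of \cite{javanmard2020analysis}. The paper's proof is terser: for the energy convergence in (iii) it simply cites \cite{javanmard2020analysis}*{Lemma F.3} for the quadratic part and the proof of \cite{javanmard2020analysis}*{Theorem F.8} for the entropy, rather than sketching the dominated-convergence argument you outline. One small imprecision: the entropy $\tau\int\rho\log\rho$ is displacement convex but not \emph{strictly} so; the $\alpha$-displacement convexity of $F$ (and hence exponential convergence of the limiting flow) comes from the uniform concavity of $f$, which makes $-\int f\rho$ an $\alpha$-convex potential term.
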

\begin{proof}
We need to verify all the conditions in Proposition \ref{prop:asyset}. The first condition (i) holds using Wasserstein displacement convexity of the energy functional \eqref{eqn:jmmfcnl0later}, condition (ii) can be proved easily using arguments from \cite[Theorem 4.1]{carrillo2019blob}, which is also done in \cite[Theorem 5.1]{craig2022blob}, and condition (iv) holds trivially on bounded domain. As for (iii), it is proven in \cite[Lemma E.2]{javanmard2020analysis} that $T_\delta^*=\infty$ for all $\delta$, and in \cite[Theorem 5.2]{javanmard2020analysis} that as $\delta \rightarrow 0$, $\rho^\delta(t) \xrightarrow{L^2} \rho(t)$ strongly for almost every $t$. The proof relies on showing the tightness of sequence $\{\rho_t^\delta\}_{t\in [0,T]}$ on the space $C([0,T]; \mathcal{P}(\Omega))$ as well as the uniqueness of the weak solution of \eqref{eq:pme}. Moreover, for the regularized energy \eqref{eqn:jmmfunctionallater} convergence as $\delta\to 0$ is proved in Lemma F.3 for the first term, and in the proof of Theorem F.8 for the entropy term. Therefore, we can appeal to Proposition \ref{prop:asyset} (ii) to prove that the asymptotic sets must also be consistent as $\delta \to 0$. 
\end{proof}

In our setting of kernelized birth-death dynamics, the assumptions (i), (ii), (iii) of Proposition \ref{prop:asyset}, with $d$ being the Wasserstein metric, are verified by Lemma \ref{lem:wellposeeps0} and Theorem \ref{thm:expconvbdkl}, Theorem \ref{thm:blobengyconv} and Theorem \ref{thm:bdgammaconv} (since $d_{SH}$ convergence implied convergence in Wasserstein metric) respectively, while assumption (iv) holds trivially on $\mathbb{T}^d$ due to compactness. Therefore we have the following theorem: 
\begin{theorem}\label{thm:bdasysetwkconv}
Under the assumptions of Theorem \ref{thm:bdgammaconv}, for any $\varepsilon$ and time sequence $(T_\varepsilon)_\varepsilon$, such that $\lim_{\varepsilon\to 0} T_\varepsilon = \infty$ and \eqref{eqn:gfkerbd} is well-posed up to time $T_\varepsilon$, we have \begin{equation*}
   \rhove_{T_\varepsilon} \xrightarrow{\varepsilon \to 0} \pi \text{ in }W_2, \text{ and } \lim_{\varepsilon\to 0}\fve(\rhove_{T_\varepsilon}) = 0.
\end{equation*}
\end{theorem}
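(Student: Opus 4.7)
The plan is to apply Proposition \ref{prop:asyset}(a) with metric $d = W_2$ on $\mathcal{P}(\T^d)$, energy functionals $E_\varepsilon = \fve$ and $E = \calF = \KL(\,\cdot\,|\pi)$, and the gradient flow curves $\rhove_t$ (solving \eqref{eqn:gfkerbd}) and $\rho_t$ (solving \eqref{eqn:purebd}) with common initial condition $\rho_0$. The conclusion of the proposition gives exactly that $\rhove_{T_\varepsilon} \xrightarrow{W_2} \pi$, and the convergence $\fve(\rhove_{T_\varepsilon}) \to 0$ then follows by combining the liminf bound (assumption (ii)) with the monotonicity $\fve(\rhove_{T_\varepsilon}) \leq \fve(\rho_0) \to \calF(\rho_0) < \infty$ and identifying the limit as $\calF(\pi) = 0$.

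The bulk of the work is checking the four hypotheses of Proposition \ref{prop:asyset}, all of which are already established earlier in the paper. For (i): $\pi$ is the unique minimizer of $\calF$, and by Theorem \ref{thm:expconvbdkl}, since $\rho_0 \in \mathcal{P}_C$ satisfies \eqref{eqn:ratiolowerbd}, we have $\calF(\rho_t) \to 0 = \calF(\pi)$ as $t \to \infty$. For (ii): Theorem \ref{thm:blobengyconv}(i), which is the $\Gamma$-$\liminf$ inequality of \cite{carrillo2019blob}, gives $\liminf_{\varepsilon\to 0}\fve(\mu_\varepsilon) \geq \calF(\mu)$ whenever $\mu_\varepsilon \wkstc \mu$; since on the compact space $\T^d$ convergence in $W_2$ implies weak-$\ast$ convergence, this suffices. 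For (iii): Theorem \ref{thm:wlBDeps} yields $T_\varepsilon^* \to \infty$, Theorem \ref{thm:bdgammaconv} gives $\rhove_t \xrightarrow{d_{SH}} \rho_t$ (hence in $W_2$, since $d_{SH}$-convergence implies $L^1$-convergence by Lemma \ref{lem:equivdHL1}, which on the torus implies $W_2$-convergence), and the convergence $\fve(\rhove_t) \to \calF(\rho_t)$ is part of the conclusion of Theorem \ref{thm:bdgammaconv}. For (iv): $\mathcal{P}(\T^d)$ is compact in $W_2$, so any subset (including sub-level sets of $\fve$) is precompact trivially.

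Having verified (i)--(iv), Proposition \ref{prop:asyset}(a) directly yields $\rhove_{T_\varepsilon} \xrightarrow{W_2} \pi$ as $\varepsilon \to 0$ along any sequence $T_\varepsilon < T_\varepsilon^*$ with $T_\varepsilon \to \infty$. For the energy convergence, the $\liminf$ inequality (ii) gives
\begin{equation*}
\liminf_{\varepsilon \to 0} \fve(\rhove_{T_\varepsilon}) \geq \calF(\pi) = 0,
\end{equation*}
while monotonicity of $\fve$ along the flow and assumption (iii) applied at $t=0$ give
\begin{equation*}
\limsup_{\varepsilon \to 0} \fve(\rhove_{T_\varepsilon}) \leq \limsup_{\varepsilon \to 0} \fve(\rho_0) = \calF(\rho_0) < \infty,
\end{equation*}
which is not yet sharp. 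To get $\fve(\rhove_{T_\varepsilon}) \to 0$, I would instead pick, for each $\varepsilon$, a cutoff time $S_\varepsilon \leq T_\varepsilon$ with $S_\varepsilon \to \infty$ slowly enough that assumption (iii) applies at $t = S_\varepsilon$ (e.g.\ by a diagonal extraction using that $\fve(\rhove_t) \to \calF(\rho_t)$ pointwise in $t$ and $\calF(\rho_t) \to 0$), obtaining $\fve(\rhove_{S_\varepsilon}) \to 0$; then monotonicity $\fve(\rhove_{T_\varepsilon}) \leq \fve(\rhove_{S_\varepsilon})$ closes the upper bound.

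The only delicate step is the last diagonal argument for the energy convergence; everything else is a bookkeeping application of results already proved. I do not anticipate a genuine obstacle since all ingredients are in place---the compactness of $\T^d$ is what makes the framework go through cleanly here, in contrast to the open problem flagged earlier on $\R^d$.
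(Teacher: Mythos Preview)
Your proposal is correct and follows essentially the same approach as the paper: verify hypotheses (i)--(iv) of Proposition~\ref{prop:asyset} using Theorem~\ref{thm:expconvbdkl}, Theorem~\ref{thm:blobengyconv}(i), Theorem~\ref{thm:bdgammaconv}, and compactness of $\T^d$, then invoke part (a) for the $W_2$ convergence. The paper's own proof is exactly this checklist and nothing more.

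One small remark on the energy convergence $\fve(\rhove_{T_\varepsilon})\to 0$: your diagonal-extraction argument works, but it is more elaborate than necessary. The simpler route is already hidden in the proof of Proposition~\ref{prop:asyset}. For any $\delta>0$, pick a \emph{fixed} time $T$ with $\calF(\rho_T)<\delta$; by (iii) there is $\varepsilon_1$ so that for all $\varepsilon<\varepsilon_1$ we have $T_\varepsilon>T$ and $\fve(\rhove_T)<2\delta$, and then monotonicity of $\fve$ along the flow gives $\fve(\rhove_{T_\varepsilon})\le\fve(\rhove_T)<2\delta$. This yields $\limsup_{\varepsilon\to 0}\fve(\rhove_{T_\varepsilon})\le 0$ directly, with no need to let the intermediate time depend on $\varepsilon$. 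Combined with the $\Gamma$-liminf bound (ii) you already noted, this closes the argument.
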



\subsection{Particle based schemes} \label{sec:jump}

One possible idea for particle approximation is to consider particle solutions to 
\eqref{eqn:gfkerbd}, in analogy with the blob method for the Fokker-Planck equation \cite{carrillo2019blob}.
For discrete measure initial data $\sum_{i=1}^N m_i \delta_{x_i} $ the equation \eqref{eqn:gfkerbd} becomes an ODE system for the masses \eqref{eqn:bdkergfn}.
While formally this  provides a deterministic particle-based algorithm that converges to approximation of $\pi$, there are a number of challenges. Namely, the support of the measure does not change (i.e. particles do not move), 
and since masses of particles can become very uneven, this affects the quality of approximation. 

\smallskip

Instead of working to overcome these challenges (which remains an intriguing direction) we will consider a random, jump, particle process whose mean field limit is the equation \eqref{eqn:gfkerbd}. 
In the idealized birth-death dynamics with infinitely many particles \eqref{eqn:purebd} (with similar modifications to \eqref{eqn:gfchi2}), each particle has a jump rate \[\Lambda(x,\rho) = \frac{\delta \calF}{\delta \rho}-\int \frac{\delta \calF}{\delta \rho} \rho  = \log \left(\frac{\rho}{\pi} \right) - \int \log \left(\frac{\rho}{\pi} \right)  \rho.\] 
If $\Lambda>0$ then the particle jumps out of position $x$, and if $\Lambda<0$ then particle jumps into $x$, both with rate $|\Lambda|$. 
The issue with implementing such algorithm on the level of particle measures is that the pointwise density $\rho$ is not available. 

\smallskip

However if one considers the energy $\fve$ instead of $\mathcal F$, then the jump rates become 
\begin{align} \label{jumpe}
\begin{split}
\Lambdae(x,\rhove_t) = & \frac{\delta \fve}{\delta \rhove_t}-\int \frac{\delta \fve}{\delta \rhove_t} \rhove_t \\
=&  \log \left(\frac{\Kve*\rhove_t}{\pi}\right) + \Kve * \left( \frac{ \rhove_t}{\Kve* \rhove_t} \right) - \! \int \log \left(\frac{\Kve*\rhove_t}{\pi}\right) \rhove_t -1.
\end{split}
\end{align}
This  interpretation of \eqref{eqn:gfkerbd} allows us to construct a finite particle approximation of the dynamics \eqref{eqn:gfkerbd}, that is, if we let $\rhove_t = \frac{1}{N}\sum_{i=1}^N \delta_{x_t^i}$, then the particle at location $x_i$ is removed or added with rate \begin{align}\label{eqn:bdratekerkl}
\begin{split}
    \Lambda(x_t^i) = & \log\left( \frac{1}{N}\sum_{j=1}^N \Kve(x_t^i-x_t^j)\right) + \sum_{j=1}^N \frac{\Kve(x_t^i-x_t^j)}{\sum_{\ell=1}^N \Kve (x_t^j-x_t^{\ell})} - \log \pi(x_t^i) \\
    &- \frac{1}{N}\sum_{\ell=1}^N \log\left( \frac{1}{N}\sum_{j=1}^N \Kve(x_t^{\ell}-x_t^j)\right) -1 + \frac{1}{N}\sum_{\ell=1}^N \log \pi(x_t^\ell).
\end{split} 
\end{align}
This interpretation is where our sampling algorithm, as well as the one in \cite{lu2019accelerating}, are based on. To preserve the number of particles, at each birth-death step we uniformly add or remove another particle if the one in $x_i$ is remover or added, respectively.  

\smallskip

Before we discuss some properties and modifications to this jump dynamics, let us
 remark that there is an alternative way to create a jump process whose mean field limit, as $\varepsilon \to 0$ is expected to approach  the pure birth-death process, \eqref{eqn:purebd}, namely simply replacing $\rho$ by $\rho * K_\varepsilon$ in the rates for birth and death in \eqref{eqn:purebd}. This is the approach considered in \cite{lu2019accelerating}. The jump rates for such process are 
\begin{equation} \label{barjump}
    \overline{\Lambdae}(x,\rhove) = \log \left(\frac{\Kve*\rhove_t}{\pi} \right) -\int_{\R^d}\log \frac{\Kve*\rhove_t}{\pi} \rhove_t 
\end{equation} 
The expected mean field limit for fixed $\varepsilon>0$ would be
 \begin{equation} \label{eqn:kerbdnongf}
 \partial_t \rhove_t = -\rho_t\left(\log(\Kve*\rhove_t)-\log \pi-\int_{\R^d} \log \frac{\Kve*\rhove_t}{\pi} \rhove_t \right).
 \end{equation} 
A downside of the dynamics \eqref{eqn:kerbdnongf} is that it is unclear if it possesses a gradient flow structure or  a Lyapunov functional that approximates KL divergence, which is why we are modifying the jump rates to be \eqref{eqn:bdratekerkl}. 

\smallskip

An alternative ensemble based sampling, where the birth-death process was achieved via jumps, has recently been introduced and studied in \cite{lindsey2021ensemble}, where the 
jump rate was 
\begin{equation}\label{eqn:lwzratechi2}
\Theta_\varepsilon = \frac{K_\varepsilon * \rhove}{\pi} - \int \frac{K_\varepsilon * \rhove}{\pi} \rhove.
\end{equation}
The limit of the mean field dynamics as $\varepsilon \to 0$ is the spherical Hellinger gradient flow of the $\chi^2$ divergence, \eqref{eqn:gfchi2}. In particular the birth-death part of their dynamics relates to \eqref{eqn:gfchi2} in the same way as the dynamics of \eqref{eqn:kerbdnongf} relates to Hellinger gradient flow of KL divergence, \eqref{eqn:purebd}. Unlike our choice of \eqref{eqn:bdratekerkl}, the rate in \eqref{eqn:lwzratechi2} does depend on the normalization constant of $\pi$, which \cite{lindsey2021ensemble} can avoid by a rescaling of time.

\smallskip

We note that a serious issue with jump processes discussed above is that the support of the measure is not expanding. The jumps only lead to new particles at the occupied locations. 
In \cite{lindsey2021ensemble} this issue is dealt with as follows: each particle created at some $x$  is moved to proposal obtained by sampling $K_\varepsilon( \, \cdot\, - x)$. This proposal is accepted according to the standard Metropolis procedure, thus ensuring that one is sampling the probability measure $\pi$. 
In our numerical experiments in Section \ref{sec:numerics} we combine the jump process of $\Lambdae$ with unadjusted Langevin algorithm (ULA). The latter sampler is responsible for exploring new territory, especially high density regions in the state space. In effect we move each particles by a gradient descent plus sampling the Gaussian centered at the particle. We did not add a Metropolis step in our experiments.

\section{Numerical Examples} \label{sec:numerics}

\begin{example}[A toy example] \label{exp:toyGMM}
This example is a modification of \cite{lu2019accelerating}*{Example 2}. We consider a two-dimensional Gaussian mixture model with four components, i.e. $\pi(x,y) = \sum_{i=1}^4 w_i \times  \mathcal{N}(m_i,\Sigma_i)$ and initial particles sampled from Gaussian $\mathcal{N}(m_0,\Sigma_0)$ where the parameters are given by  \begin{align*}
    [w_1, w_2 , w_3 , w_4] = [0.5 , 0.1 , 0.1 , 0.3], \, m_1 =  [0,2], \, m_2=[-3,5], \, m_3=[0,8], \, m_4=[3,5]. \\  \Sigma_1 = \Sigma_3 = \begin{pmatrix} 0.8 & 0 \\ 0 & 0.01 \end{pmatrix}, \, \Sigma_2=\Sigma_4 = \begin{pmatrix}0.01 & 0 \\ 0 & 1 \end{pmatrix}, \, m_0 = [0,8], \, \Sigma_0 = \begin{pmatrix} 0.3 & 0 \\ 0 & 0.3 \end{pmatrix}.
\end{align*}
Morally speaking, each of the four Gaussian components of $\pi$ are essentially supported on a very narrow domain with little intersection between  each other. At the beginning, all particles are concentrated near the top Gaussian centered at $m_0=m_3$, which is a metastable region, so if the particles follow the overdamped Langevin dynamics, it will take an extremely long time for each particle to escape any certain Gaussian, and with many particles, it is numerically intractable to observe a significant amount of particles to be present in all metastable regions.

\smallskip

Our algorithm BDLS-KL is an implementation on a modification of \eqref{eqn:gfkerbd}, that is, \begin{equation}\label{eqn:gfkerbdfk}
    \partial_t \rhove_t = \nabla \cdot (\rhove_t \nabla \log \frac{\rhove_t}{\pi})-\rhove_t \left(\log(\frac{\Kve*\rhove_t}{\pi}) + \Kve * \left( \frac{ \rhove_t}{\Kve* \rhove_t} \right) - \int \rhove_t\log(\frac{\Kve*\rhove_t}{\pi})-1 \right).
\end{equation} We simulate \eqref{eqn:gfkerbdfk} using a ``splitting scheme'', that is alternating between an unadjusted Langevin step and a birth-death step. More precisely, we use approximate the density $\rho_t^{(\varepsilon)}$ with a finite sum of Diracs with equal weights, i.e. $\rhove_t \approx \frac{1}{N}\sum_{i=1}^N \delta_{x_t^{(i)}}$, and at each time step we first perform a Langevin move for all particles and then a birth-death move with jump rates given by \eqref{eqn:bdratekerkl}. The Fokker-Planck term is necessary in our algorithm due to the fact that the pure birth-death dynamics does not find new locations. For the algorithm BDLS-chi2, we replace the energy functional by regularized $\chi^2$-divergence, that is $\frac{1}{2}\int \rho\frac{\Kve*\rho}{\pi}$, and everything else is identical to BDLS-KL.

\smallskip
We compare these two birth-death based sampling methods with the unadjusted overdamped Langevin dynamics (ULA) as well as SVGD \cite{liu2016stein}. We choose $N=800$ particles, $\Delta t =  10^{-3}$ and kernel bandwidth $\varepsilon=0.2$ for all algorithms and compare their error of estimating $\mathbb{E}_\pi f$ with $f(x,y) = x^2/3+y^2/5$, as well as their Maximum Mean Discrepancy (MMD) \cite{arbel2019maximum}, which can be computed explicitly for Gaussian mixtures.

\smallskip

Figure \ref{fig:gmmquantities} shows that the algorithms based on birth-death dynamics converge to equilibrium much faster than Langevin dynamics or SVGD in terms of both observable error and MMD distance. More precisely, algorithms based on birth-death dynamics reach equilibrium at $T\approx 10$, while for the other two algorithms, although they also eventually converge to equilibrium, it is not achieved even at $T\approx 100$. Figure \ref{fig:gmmparticles} provides a more intuitive explanation on the fact that birth-death based algorithms are significantly better at penetrating energy barriers and overcoming metastability. 
\begin{figure}
    \begin{minipage}{2in}
    \includegraphics[width=1.85in]{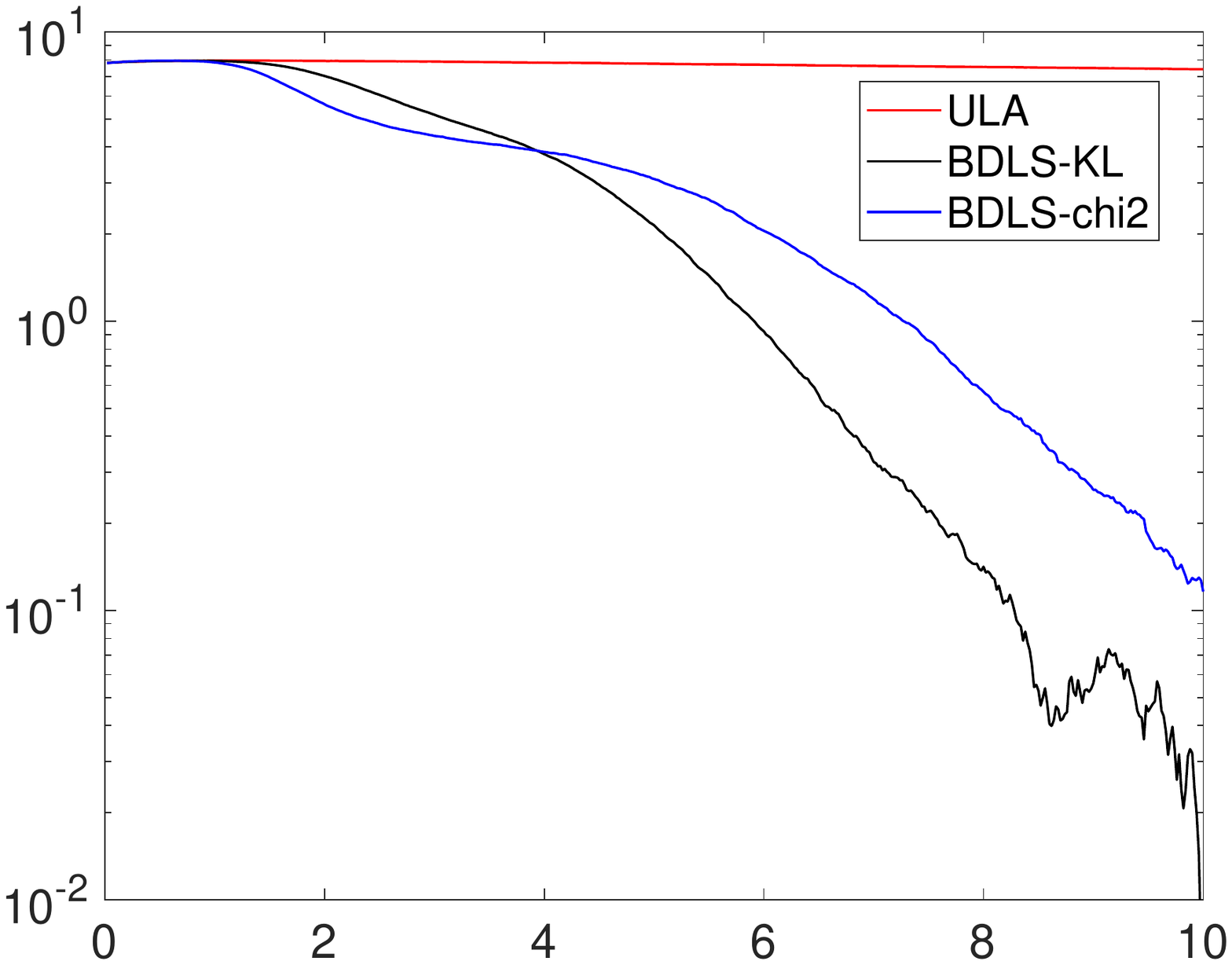}
    \end{minipage}
    \begin{minipage}{2in}
    \includegraphics[width=1.85in]{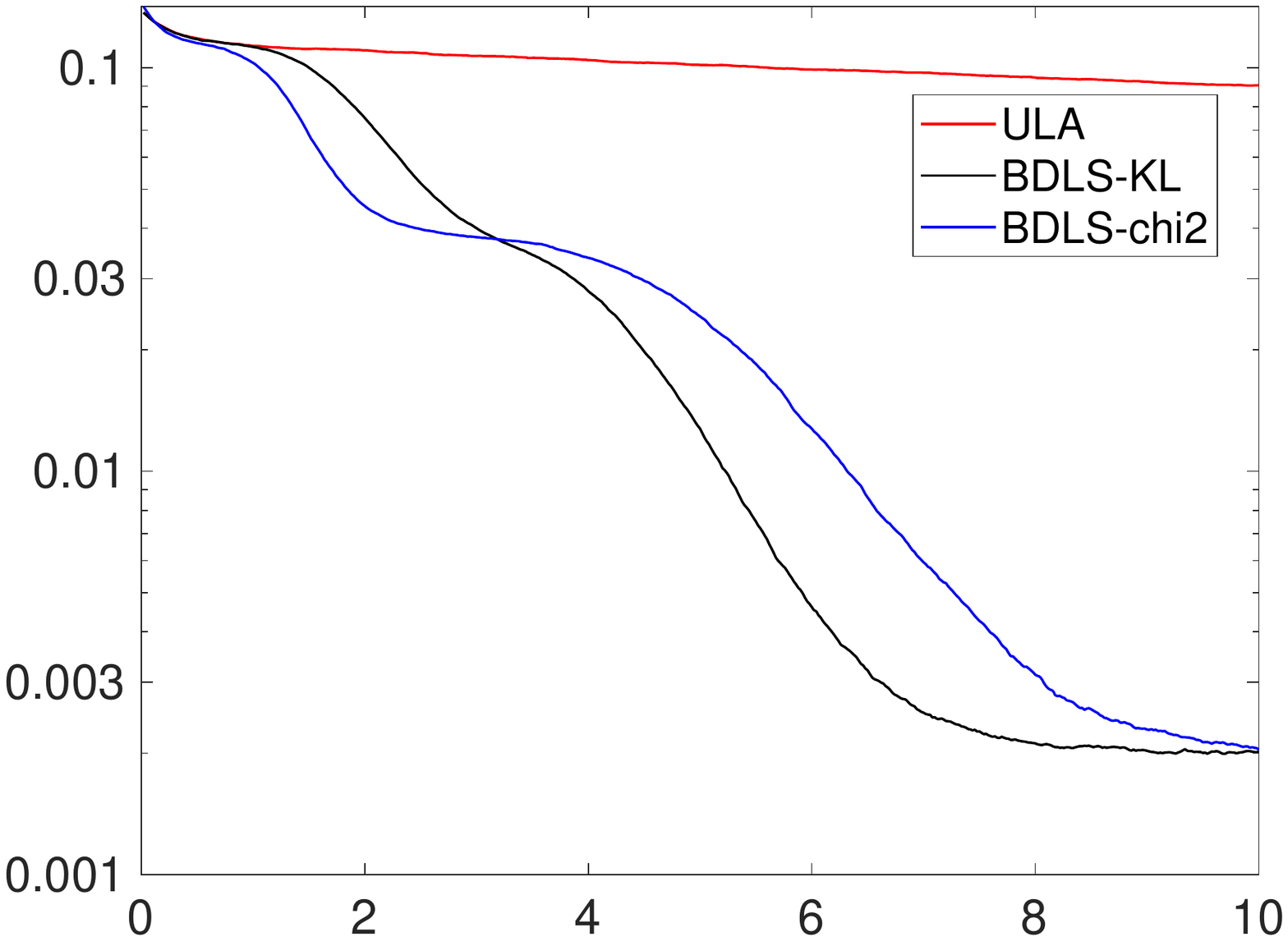}
    \end{minipage}
    \begin{minipage}{2in}
    \includegraphics[width=1.85in]{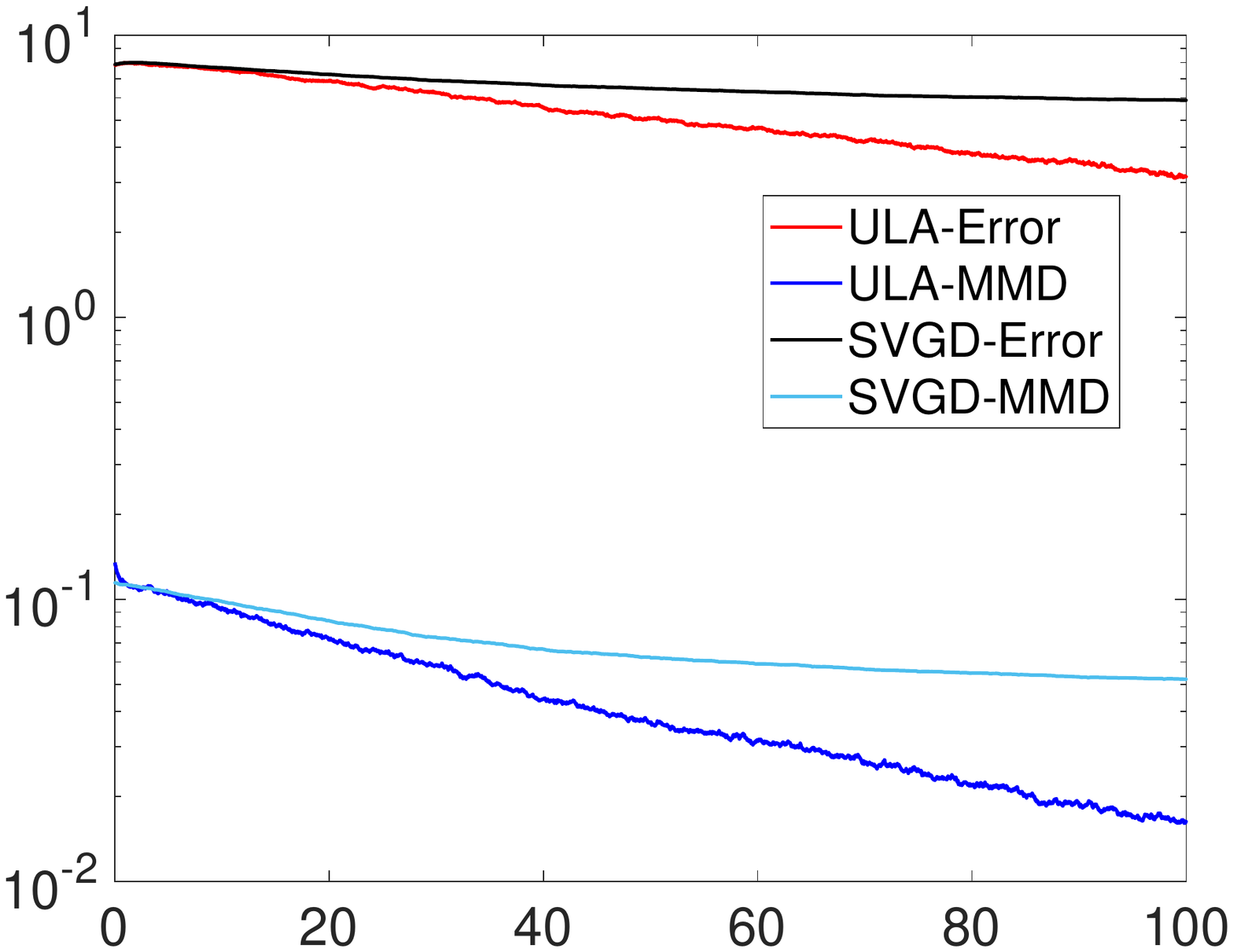}
    \end{minipage}
    \caption{Gaussian mixture example. Left: error of observable $f(x,y) = x^2/3+y^2/5$; center: MMD with kernel $K(x,y) = (2\pi)^{-\frac{d}{2}}e^{-\frac{|x-y|^2}{2}}$; right: observable error and MMD for Langevin dynamics (ULA) and SVGD up to $T=100$. Both left and center plots are averaged over 30 experiments. Both birth-death algorithms based on KL and $\chi^2$ converge much faster to equilibrium as $t$ gets larger.}
    \label{fig:gmmquantities}
\end{figure}
\begin{figure}
    \begin{minipage}{5in}
    \centering
    \includegraphics[width=4.9in]{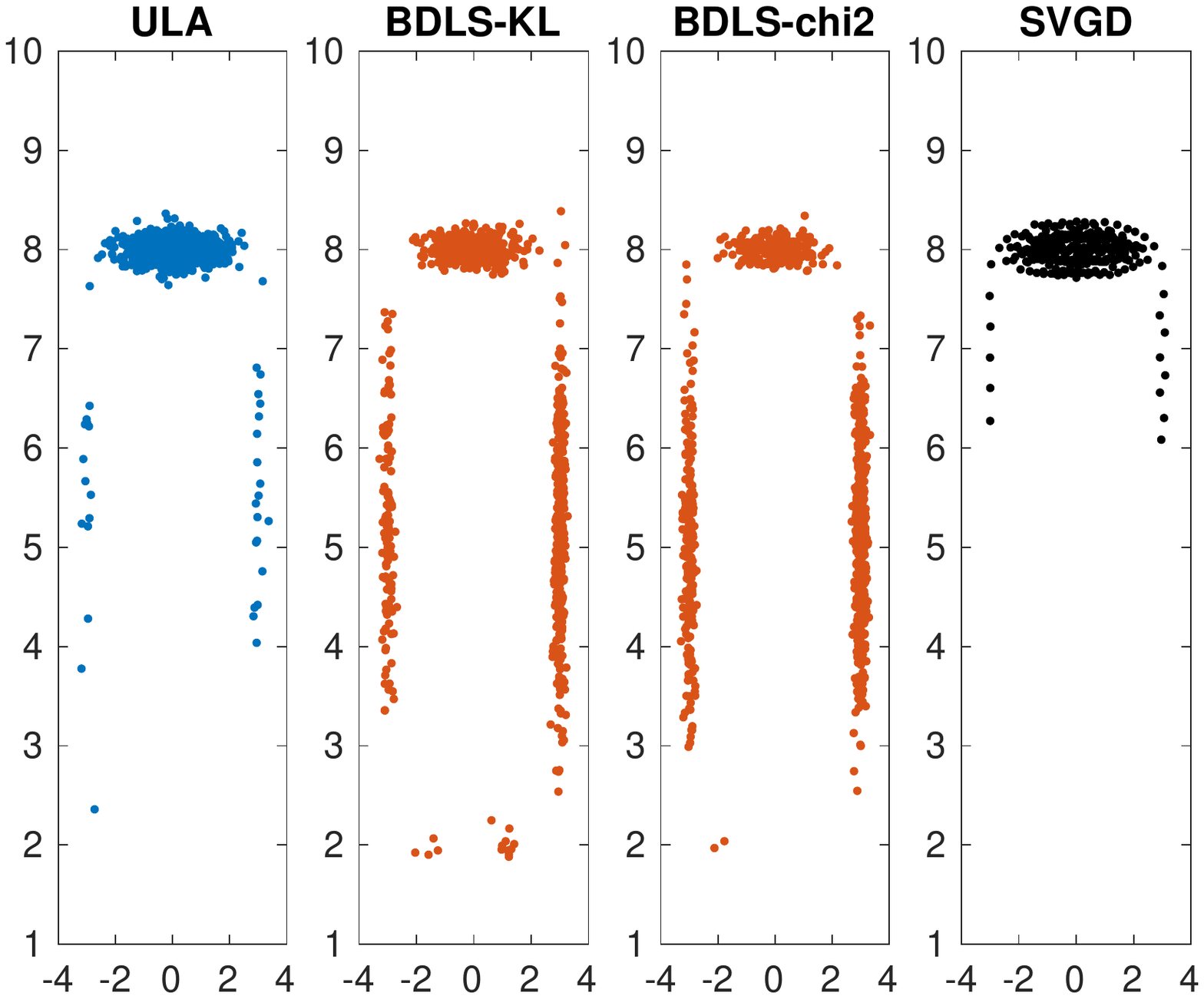}
    \end{minipage}
    \begin{minipage}{5in}
    \centering
    \includegraphics[width=4.9in]{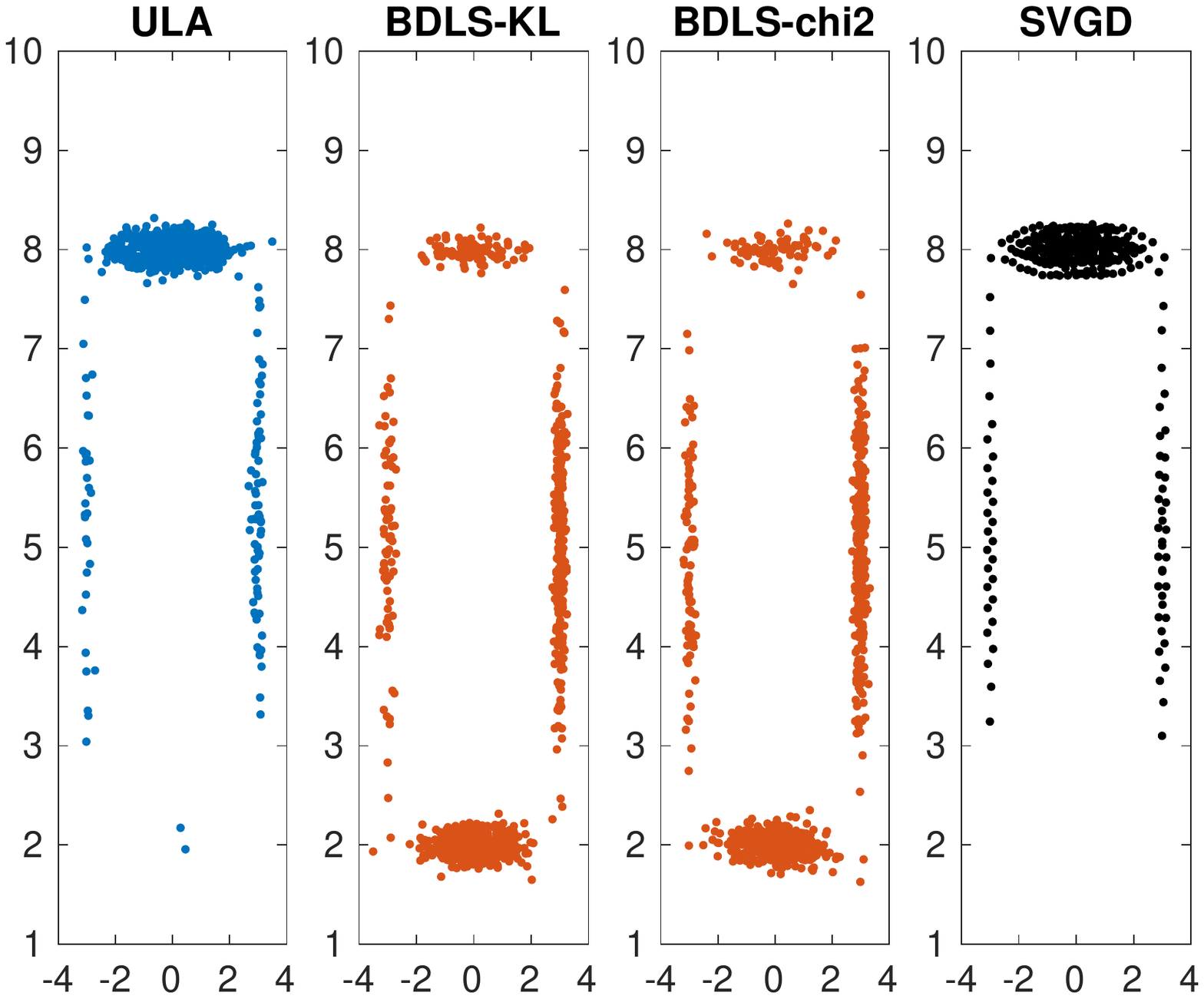}
    \end{minipage}
    \caption{Gaussian mixture example. Top: position of particles at $T=3$; bottom: position of particles at $T=10$. Algorithms based on birth-death are better at attracting particles into under-explored regions.}
    \label{fig:gmmparticles}
\end{figure}

\end{example}

\begin{example}[Real-world data] \label{exp:BayesClass}
We also tested the birth-death method on the Bayesian logistic regression for binary classification
using the same setting as \cite{gershman2012nonparametric, liu2016stein, korba2021kernel}, which assigns the hidden regression weights $w$ with a precision parameter $\alpha\in \R_+$, and that we impose Gaussian prior $p(w|\alpha) = \mathcal{N} (w, \alpha^{-1}\mathrm{Id})$ on $w$ and $p(\alpha) = \textrm{Exp}(0.01)$. The observables $y\in \{-1,1\}$ are generated by $\mathbb{P}(y=1|x,w) = (1+\exp(-w^\top x))^{-1}$. The inference is applied on posterior $p([w, \log \alpha]|[x,y])$. We compare the performance of our algorithm with SVGD in terms of accuracy and log-likelihood. We would like to comment here that the kernel bandwidth of SVGD is chosen using the median trick, while for birth-death, since the bandwidth is proportional to the bias, we choose the bandwidth to be $0.1$, $0.5$ or $1$, whichever provides the best performance. The results are shown in Table \ref{table:BDLSvsSVGD}, showing that when the dimension is not too large and running time is relatively long, both birth-death sampler (with Langevin steps) and SVGD perform similarly well. 

\smallskip

We also compare the behavior of both criteria, accuracy and log-likelihood, as time evolves between $t\in[0,5]$. The results are shown in Figure \ref{fig:benchmarktime}. One can observe that birth-death Langevin sampler reaches the desired accuracy at an extremely short time $t\approx 0.3$, which SVGD cannot achieve before $t=5$. This indicates that one can run BDLS for a much shorter time to reach equilibrium, significantly alleviating the computational cost issue of running a system of many particles. This is the spirit of our Theorem \ref{thm:expconvbdkl}.

\smallskip

We would like to comment that for the dataset ``splice'' where $d=61$, the performance of birth-death sampler is significantly worse, even with $N=2000$, which is not entirely surprising due to the limitations of kernel density estimation. It is an interesting open question to design a sampling algorithm which can inherit the fast convergence properties of spherical Hellinger gradient flows and be robust in high dimension settings.
\begin{table}
\begin{tabular}{c|c| c|c|c|c}
     \hline Dataset & Dimension &  \makecell{BDLS \\ accuracy} & \makecell{BDLS \\ log-likelihood} & \makecell{SVGD \\ accuracy} & \makecell{SVGD \\ log-likelihood}  \\ \hline
     Banana & 3 & 0.583 & -0.690 & 0.585 & -0.686 \\ Breast\_cancer & 10 & 0.714 & -0.604 & 0.714 & -0.586 \\ Diabetis & 9 &  0.763 & -0.527 & 0.753 & -0.529 \\ Flare\_solar & 10 & 0.683 & -0.578 & 0.685 & -0.600 \\ German & 21 & 0.687 & -0.598 & 0.680 & -0.597 \\ Heart & 14 & 0.840 & -0.376 & 0.850 & -0.379 \\ Image & 19 & 0.817 & -0.433 &  0.815 & -0.434 \\ Ringnorm & 21 & 0.760 & -0.521 & 0.760 & -0.501 \\ Thyroid & 6 & 0.933 & -0.250 & 0.933 & -0.294 \\  Titanic & 4 & 0.780 & -0.586 & 0.785 & -0.566 \\ Twonorm & 21 & 0.973 & -0.069 & 0.975 & -0.112 \\ Waveform & 22 & 0.773  & -0.466 & 0.773 & -0.469
\end{tabular}
\caption{Bayesian logistic regression for binary classification. For both algorithms, $N=500$, time stepsize $\Delta t=10^{-3}$, final time $T=15$.}
\label{table:BDLSvsSVGD}
\end{table}

\begin{figure}
    \begin{minipage}{3in}
    \includegraphics[width=2.9in]{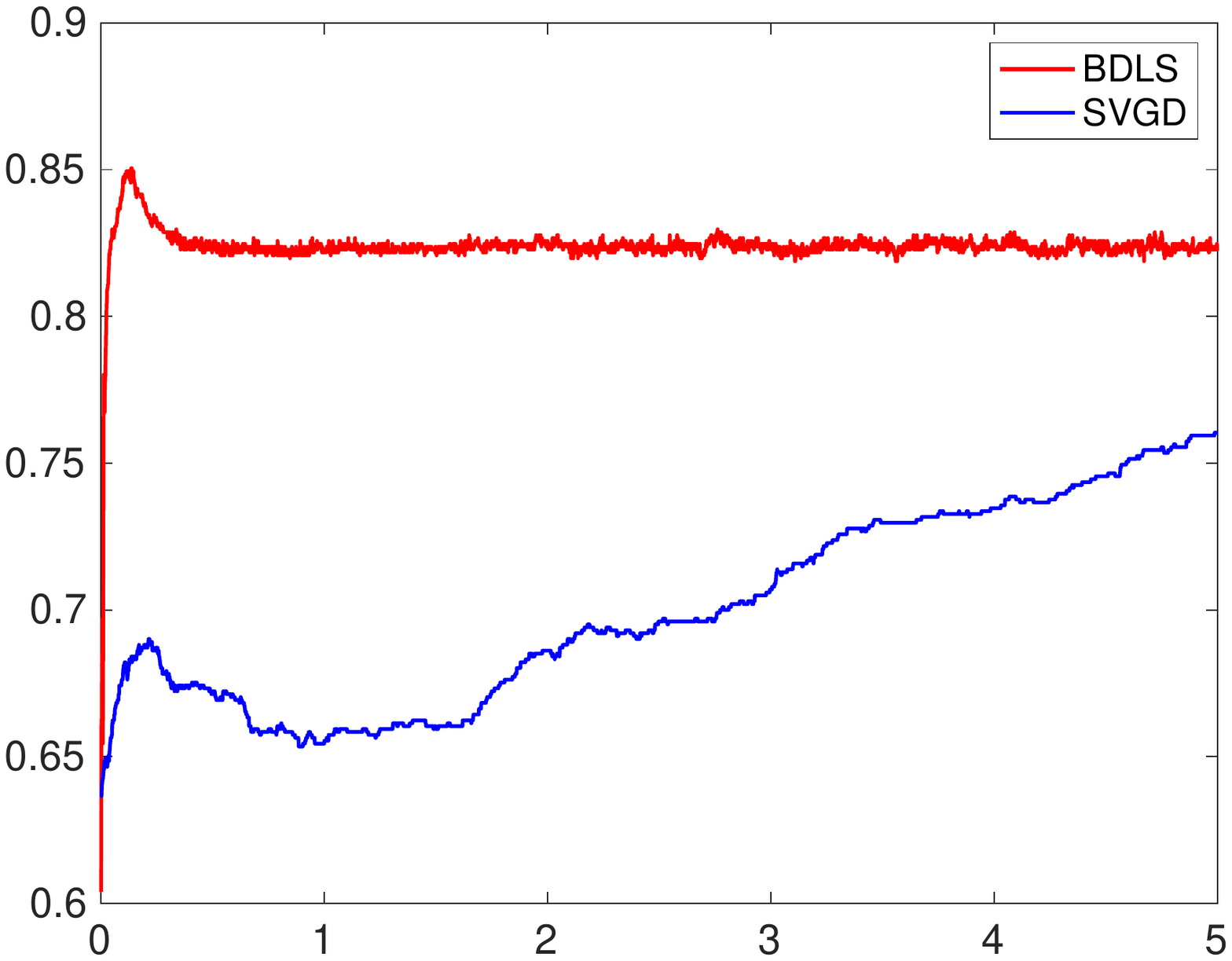}
    \end{minipage}
    \begin{minipage}{3in}
    \includegraphics[width=2.9in]{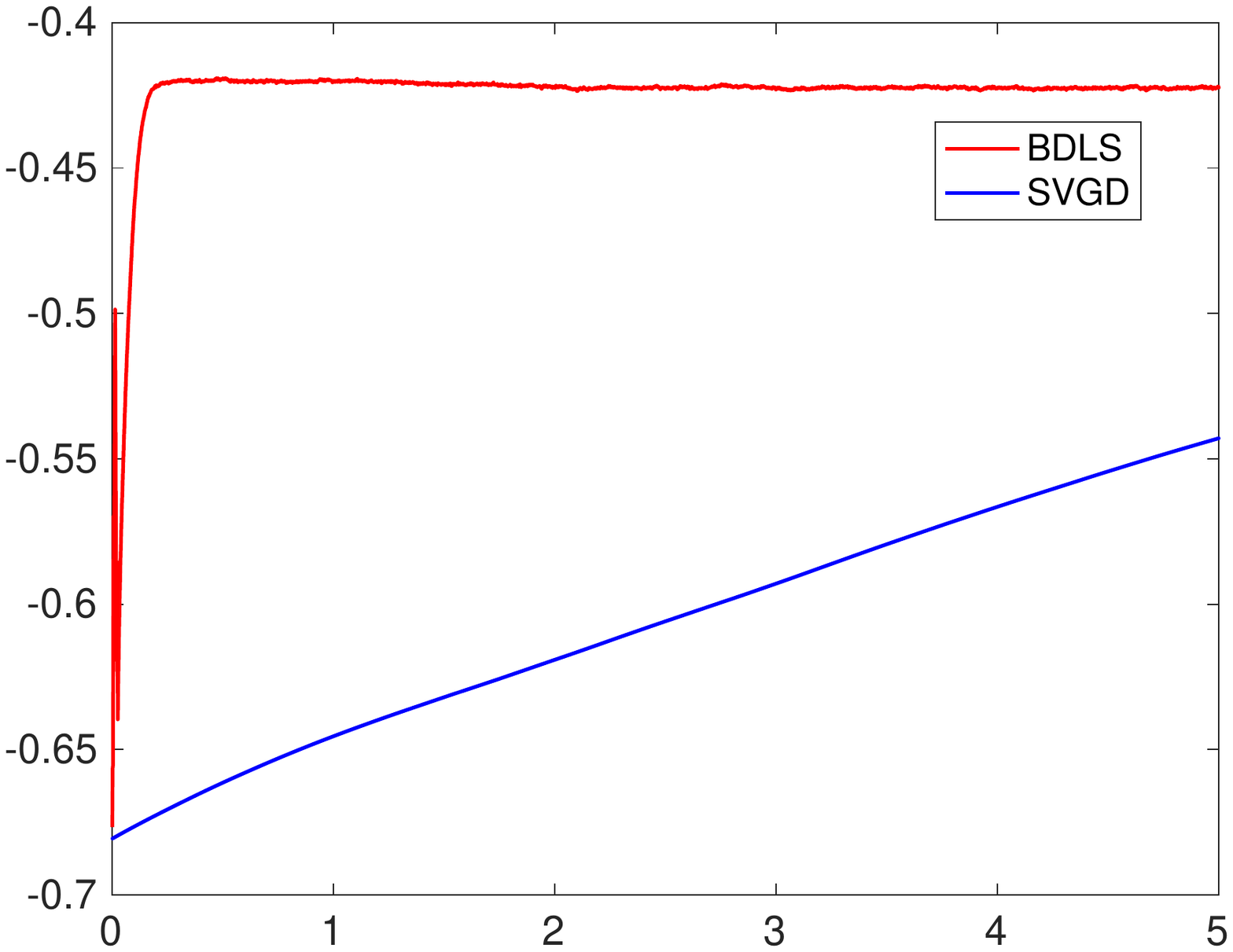}
    \end{minipage}
    \caption{Bayesian classification problem with dataset ``Image''. The birth-death Langevin algorithm reaches the desired accuracy and log-likelihood much faster than SVGD.}
    \label{fig:benchmarktime}
\end{figure}
\end{example}

\section*{Acknowledgement}
The authors would like to thank Katy Craig and L\'eonard Monsaingeon for helpful discussions. LW would like to thank Rishabh Gvalani for discussions during revision. YL thanks NSF for the support via the award DMS-2107934. DS is grateful to NSF for support via grant DMS 2206069.  Part of this work was done while the authors were visiting the Simons Institute for the Theory of Computing. The authors are thank the institute for hospitality. DS and LW would also like to thank CNA at CMU for support. 

\section*{Data availability statement}
The data that support the findings of this study are available upon request from the authors.

\bibliographystyle{amsplain}
\bibliography{bd}

\end{document}